\newcommand{\bG}{\mathbb{G}}
\newcommand{\E}{\mathcal{E}}
\newcommand{\V}{\mathcal{V}}
\newcommand{\Q}{\mathcal{Q}}
\newcommand{\gec}[1]{\mathcal{#1}}
\newcommand{\calM}{\mathcal{M}}
\newcommand{\dr}[1]{\underline{#1}}
\newcommand{\cog}[1]{\textbf{\textit #1}}
\newcommand{\ass}[1]{#1}
\def\wil{{}^\odot}
\def\ouco{(\mskip-3mu(}
\def\cuco{)\mskip-3mu)}
\def\ea{E_{\textup{a}}}
\def\ev{E_{\textup{v}}}
\def\ef{E_{\textup{f}}}
\DeclareMathOperator{\Match}{Match}
\newcommand{\hsplice}{\rotatebox[origin=c]{90}{$\Join$}}
\newcommand{\vsplice}{\mathbin{\rotatebox[origin=c]{90}{$=$}}}
\newcommand{\spa}[1]{\mathbf{#1}}
\newcommand{\ba}{\mathbin{\backslash}}
\newcommand{\ext}{\mathbin{\dagger}}
\DeclareMathOperator{\seg}{seg}
\DeclareMathOperator{\sat}{sat}
\DeclareMathOperator{\cro}{cr}
\DeclareMathOperator{\ksplit}{split}
\DeclareMathOperator{\ksplice}{splice}
\DeclareMathOperator{\ktotal}{total}
\newcommand{\tcon}{\mathbin{\rightthreetimes}}
\newtheorem{theorem}{Theorem}[section]
\newtheorem{corollary}[theorem]{Corollary}
\newtheorem{lemma}[theorem]{Lemma}
\newtheorem{proposition}[theorem]{Proposition}
\theoremstyle{definition}
\newtheorem{definition}[theorem]{Definition}
\newtheorem{example}[theorem]{Example}
\theoremstyle{remark}
\newtheorem{remark}[theorem]{Remark}
\numberwithin{equation}{section}
\begin{document}

\title{Polynomial invariants of cyclically ordered graphs}

\author{Paul Bratch}
\address{Department of Mathematics, Royal Holloway, University of London, Egham, TW20~0EX, United Kingdom}
\email{paul.bratch.2022@live.rhul.ac.uk}

\author{M. N. Ellingham}
\address{Department of Mathematics, 1326 Stevenson Center, Vanderbilt University, Nashville, Tennessee 37240, U.S.A.}
\email{mark.ellingham@vanderbilt.edu}

\author{Joanna A. Ellis-Monaghan}
\address{Korteweg de Vries Institute for Mathematics, University of Amsterdam, 1098 XH Amsterdam, The Netherlands}
\email{j.a.ellismonaghan@uva.nl}

\author{Iain Moffatt}
\address{Department of Mathematics, Royal Holloway, University of London, Egham, TW20~0EX, United Kingdom}
\email{iain.moffatt@rhul.ac.uk}

\author{Wout Moltmaker}
\address{Korteweg de Vries Institute for Mathematics, University of Amsterdam, 1098 XH Amsterdam, The Netherlands}
\email{w.c.moltmaker@uva.nl}

\subjclass[2020]{05C10, 05C31}

\date{\today}

\keywords{keywords} 

\begin{abstract}
 Cyclically ordered graphs, or \emph{cogs}, sit between abstract graphs and  cellularly embedded graphs.    They arise naturally in topological graph theory, knot theory, and mathematical biology. 
We develop  a formal theory of cogs and establish a number of  invariants of cogs. In particular we detail several ways to present cogs and detail how these descriptions can be used to construct cog invariants by adapting  the matching, transition and Yamada polynomials.
\end{abstract}

\keywords{cog, cyclically ordered graph,  graph embedding, matching polynomial, rigid vertex graph, transition polynomial, Yamada polynomial}

\maketitle


\section{Introduction}
We develop the theory of cyclically ordered graphs, or \emph{cogs}. 
Cogs arise in several distinct areas of mathematics including topological graph theory, knot theory, and mathematical biology. In this paper we consolidate the various approaches to cogs and present a common theory for them. We then define and analyse a range of polynomial-valued cog invariants that will serve as tools for further understanding the properties of cogs. 

A cog is an abstract graph with an undirected cyclic order at each vertex.
 Cogs sit between abstract graphs and graphs cellularly embedded in surfaces, where the latter can be described by graphs with not only (directed) cyclic orders at the vertices but also signs on the edges (see Subsection~\ref{ss:rs} for details).  For example, while there is exactly one abstract graph with two edges and one vertex (up to graph isomorphism), and six cellular embeddings of this graph (up to equivalence), there are exactly two cogs derived from this graph (up to cog isomorphism, defined later).

Our interest in cogs stems in part from topological graph theory and Chmutov's partial duals~\cite{zbMATH05569114}. It is a classical result that two embedded graphs are geometric duals (also known as Euler--Poincar\'e duals or surface duals)  if and only if their medial graphs are equal as embedded graphs. 
If instead we ask when two embedded graphs are partial duals then this is characterised by their embedded medial graphs having isomorphic cogs (see \cite{MR2869185} for details). Cogs also appear in the context  of \emph{rigid vertex graphs} in knot theory and in DNA applications, for example in \cite{B+15,B+13,Kau89}.

Given the success of graph polynomials as tools for understanding graphs and cellularly embedded graphs, it is natural to seek polynomials that serve analogously for cogs.  There are already many polynomials encoding information about abstract graphs, and also quite a few for graphs embedded in surfaces.  However, we know of no prior polynomial that captures the intermediate structure, between abstract and embedded graphs, recorded by cogs.  We begin filling this gap.

The observation that cogs interpolate between abstract graphs and cellularly embedded graphs motivates our strategy for constructing polynomial invariants of cogs.  We look either for ways to add structure to known abstract graph invariants so that they record information about the cog, or for ways to remove structure from known topological graph invariants so that they become cog invariants. Here we illustrate our strategies by constructing cog invariants arising from three very different invariants: the matching polynomial in Section~\ref{sec:matching_polynomial}, the transition polynomial in Section~\ref{sec:trans2}, and the Yamada polynomial in Section~\ref{ss.yam}. Each section demonstrates a different general approach to constructing cog invariants. We conclude with a discussion of some further directions of study in Section~\ref{sec:further}.

\section{Background}\label{sec:back}

In this section we review some common concepts from topological graph theory, give a formal definition of cogs, and contrast and compare cogs and cellularly embedded graphs. We conclude by defining {graph-encoded cogs}, or {gecs}, which give a description of cogs analogous to Lin's description of embedded graphs as graph-encoded maps, or gems~\cite{zbMATH03728291}. The different descriptions of cogs provide us with different approaches to constructing cog invariants in the subsequent sections. 
The key topological graph theoretic equivalences we review in this section may be summarised as follows.
\begin{itemize}
\item Embedded graphs  $\leftrightarrow$ signed rotation systems up to vertex flips $\leftrightarrow$ ribbon graphs.
\item Oriented embedded graphs $\leftrightarrow$ rotation systems.
\item Orientable embedded graphs $\leftrightarrow$ rotation systems up to component reversal.
\end{itemize}
We show the following are equivalent. 
\begin{itemize}
\item Cogs.
\item  Rotation systems up to vertex reversals.
\item Signed rotation systems up to vertex flips and  arbitrarily changing edge signatures.
\item Ribbon graphs or embedded graphs up to taking partial Petrie duals.
\item Graph encoded cogs (gecs).
\end{itemize}

\medskip

We allow graphs to have multiple edges and loops, and think of each edge as consisting of two half-edges.
Therefore, our formal definition of a graph $G$ has a set of vertices $V(G)$, a set of half-edges that are paired into edges, with edge set $E(G)$, and an incidence function that maps each half-edge to a vertex.
An isomorphism between graphs has a bijection between the vertex sets and a bijection between the sets of half-edges that preserve pairing into edges and incidence.  We consider graphs up to isomorphism.
In certain derived structures (gecs, generalised gecs, pointed-gecs and medial graphs) we also allow graphs to have \emph{free loops}. A free loop is an edge that is not incident to any vertex.  Free loops are often drawn as circles.
Each free loop is considered to be a cycle of length $0$, and counts as a component of $G$.

We write $k(G)$ for the number of components of a graph $G$. 
We let $G \ba e$  denote the graph obtained by deleting the edge $e$, and $G / e$  denote the graph obtained by contracting the edge $e$.
For a vertex $v$ we let $G \ba v$  denote the result of deleting $v$  from $G$, that is removing $v$ along with all its incident edges. 
For a set  of edges $A$ we write $G \ba A$, respectively $G / A$, for the result of deleting, respectively contracting, all of the edges in $A$. Similarly we write $G \ba U$ for the result of deleting all the vertices in a set $U$ of vertices.
Finally, we let  $G\ext e$ denote  the graph resulting from  \emph{extracting} an edge $e=(u,v)$ from $G$, which is defined to be the graph $G \ba \{u, v\}$, where possibly $u=v$.
Note that if $e$ and $f$ have no end points in common, then $(G\ext e) \ext f = (G\ext f) \ext e = G \ba U$, where $U$ is the set of the endvertices of $e$ and $f$.  However, if $e$ and $f$ share any endvertices, then neither $(G\ext e) \ext f$ nor $(G\ext f) \ext e$ is defined.

\medskip

Surfaces in this paper are compact but not necessarily connected $2$-manifolds.  If a surface can be given a globally consistent ``clockwise'' direction, it is \emph{orientable}, and the surface together with the clockwise direction is an \emph{oriented surface}.  Otherwise the surface is \emph{nonorientable}.
Every connected orientable surface corresponds to two oriented surfaces, since there are two choices of clockwise direction.

We assume that the reader is familiar with embeddings of graphs in surfaces and their various descriptions, and give only brief reminders of the constructions that are necessary here.  All embeddings in this paper are \emph{cellular}, with every face homeomorphic to an open disc, unless otherwise indicated.  We consider embeddings up homeomorphisms of the surface that induce graph isomorphisms.  When considering \emph{oriented embeddings}, in oriented surfaces, the homeomorphisms should be orientation-preserving.

\medskip

We also assume familiarity with \emph{cyclic orders} and with \emph{undirected cyclic orders} (sometimes called \emph{separation relations}).  These can be defined axiomatically as ternary and quaternary relations, respectively (see for example \cite{Hun35}), but we just note that a cyclic ordering of a finite set $S$ can be represented by a list $(x_1, x_2, \dots, x_s)$ of the elements of $S$ up to cyclic shifts, and an undirected cyclic ordering can be represented by a list $\ouco x_1, x_2 ,\dots, x_s \cuco$ of the elements of $S$ up to cyclic shifts and  reversal. If $|S| \le 3$ then $S$ has a unique undirected cyclic ordering.

%

\subsection{Cogs in the context of rotation systems}\label{ss:rs}
As is well-known (see for example \cite[Section 3.2]{GT87} or \cite[Section 3.3]{MT01}), cellular embeddings of graphs in surfaces can be described combinatorially using signed rotation systems.
A \emph{rotation system} is a graph together with a cyclic order of the half-edges incident to each vertex.  An \emph{edge signature} is a function mapping each edge of a graph to a sign, $+$ or $-$.  A \emph{signed rotation system} consists of a rotation system and edge signature for the same graph.
Two rotation systems, edge signatures, or signed rotation systems are \emph{isomorphic} if there is an isomorphism between their underlying graphs that preserves the cyclic order at each vertex and/or the edge signature, as appropriate.

Given a vertex $v$, a rotation system can be acted on by \emph{vertex reversal} at $v$, which reverses the cyclic order of half-edges incident with $v$.  An edge signature can be acted on by \emph{vertex switching} at $v$, which changes the sign of every non-loop edge incident with $v$.
A signed rotation system can be acted on by the combination of a vertex reversal at $v$ and a vertex switching at $v$, which we call a \emph{vertex flip} at $v$.
By applying sequences of these operations, two rotation systems (or signed rotation systems) can be \emph{reversal-equivalent}, two edge signatures (or signed rotation systems) can be \emph{switching-equivalent}, and two signed rotation systems can be \emph{flip-equivalent}.

Given a cellular graph embedding, we construct a signed rotation system by assigning a local clockwise orientation at each vertex, taking the half-edges incident with each vertex in their local clockwise order to get a rotation system, and assigning an edge a $+$ sign if translating the local clockwise orientations at one endvertex along the edge gives the local clockwise orientation at the other endvertex, and a $-$ sign if not.
Conversely, a signed rotation system describes how to glue discs onto a graph to obtain an embedding in a surface (see \cite[Section 3.2]{GT87} or \cite[Section 3.3]{MT01}). 
Cellular embeddings of $G$ are in 1-1 correspondence with flip-equivalence classes of signed rotation systems (each flip corresponds to changing the local clockwise direction at some vertex).

Oriented embeddings can also be represented using signed rotation systems, but to reflect the oriented nature of the surface we  use the specified clockwise direction at each vertex.  Because the direction is globally consistent, all edges receive a $+$ sign, so the rotation system alone describes the embedding.  
This gives a 1-1 correspondence between rotation systems for a graph $G$ and oriented embeddings of $G$.  Therefore, vertex reversal induces a well-defined operation on oriented embeddings.

Orientable embeddings of a graph $G$ can also be represented using signed rotation systems with all signs $+$, and hence just by rotation systems, by choosing some globally consistent clockwise direction.  There are two choices for this direction on each component of the surface, and making different choices corresponds to taking \emph{component reversals} in the rotation system, where we reverse every vertex in a given component of $G$.  Orientable embeddings therefore correspond to \emph{component-reversal-equivalence} classes of rotation systems.
Because individual vertex reversals commute with component reversals, vertex reversal is a well-defined operation on component-reversal-equivalence classes of rotation systems, and hence induces one on orientable embeddings.
An orientable embedding can usually also be described by signed rotation systems with some $-$ signs, but every such signed rotation system is flip-equivalent to one with all signs $+$.

\medskip

Cogs discard some of the information in a rotation system or signed rotation system. 
\begin{definition}
A \emph{cyclically ordered graph} or \emph{cog} is a graph with an \emph{undirected} cyclic ordering of the half-edges incident with each vertex.
Two cogs are \emph{isomorphic} if there is an isomorphism between their graphs that preserves the undirected cyclic order at each vertex.
\end{definition}

Each rotation system (or signed rotation system) has an \emph{underlying cog}, where we consider the cyclic order at each vertex up to reversal to obtain an undirected cyclic order.
This gives a 1-1 correspondence between cogs and reversal-equivalence classes of rotation systems, or of \emph{oriented} embeddings.
There is also a 1-1 correspondence between cogs and reversal-equivalence classes of \emph{orientable} embeddings.
Moreover, flip-equivalent signed rotation systems have reversal-equivalent rotation systems.  Thus, each flip-equivalence class of signed rotation systems, which is to say each graph embedding, has a unique underlying cog.
Furthermore, two signed rotation systems will have the same underlying cog if and only if their rotation systems are reversal-equivalent (in particular, the signatures are ignored).

For example, consider a theta graph, which consists of two vertices with three parallel edges between them.
There are two ways to assign a cyclic order at each vertex, giving four rotation systems and hence four oriented embeddings (two mirror images in the sphere or plane, and two mirror images in the torus).
But there is only one undirected cyclic order at each vertex, hence only one cog.  There are eight edge signatures, and hence $32$ signed rotation systems, which under flip-equivalence give eight different embeddings.  Up to isomorphism there are only two rotation systems and one cog, and up to homeomorphisms that induce graph isomorphisms there are four embeddings (one planar, one toroidal, one in the projective plane and one in the Klein bottle).

The above example of theta graphs is an instance of a general observation. Since there is a unique undirected cyclic ordering for any set of at most three elements, a graph of maximum degree at most $3$ has a unique cog. Thus, such cogs are equivalent to graphs. Of course, this is not true for cogs in general. For example, Table~\ref{tab:cogs} lists the connected cogs on three edges up to isomorphism. The convention in the table is the two half-edges forming an edge are given the same number, and the cycles specify the cyclic orders at the vertices. 

\begin{table}
\centering
\begin{tabular}{|c|c|}\hline
No. vertices & Connected cogs with 3 edges  \\ \hline
1 & (121323), (121233), (122133), (112233), (123123) \\ \hline
2 & (11223)(3), (12213)(3), (12123)(3), (123)(123), \\ 
 & (12)(1233), (12)(1323), (112)(233) \\ \hline
3 & (12)(123)(3), (1)(23)(123), (112)(3)(23), (123)(1)(23), (1)(1232)(3), \\ 
 & (1)(1223)(3), (1)(12)(233), (12)(133)(2), (12)(13)(23) \\ \hline
4 & (1)(2)(123)(3), (1)(23)(12)(3), (1)(2)(23)(13), (1)(12)(3)(23)
\\ \hline
\end{tabular}

\caption{Connected cogs on 3 edges.}
\label{tab:cogs}
\end{table}

\medskip

While a graph embedding has a unique underlying cog that does not depend on the signed rotation system used to represent it, two distinct graph embeddings may have the same cog.  To characterise when this happens we consider changing the signs of a set of edges in a signed rotation system.  Any such operation commutes with vertex flips, so it is a well-defined operation on flip-equivalence classes of signed rotation systems, and hence on graph embeddings.  These operations have been used since at least the late 1970s (see for example \cite{Sta78}) and were formalized in the context of ribbon graphs as \emph{partial Petrie duals} (\emph{partial Petrials} for short) \cite{MR2869185} (see Subsection \ref{ss:cogrg}).
Specifically, if $A$ is a set of edges in a graph $G$, then two flip-equivalence classes of signed rotation systems for $G$ (and hence the corresponding embeddings of $G$) are \emph{partial Petrials with respect to $A$} if a representative of the second class can be obtained from a representative of the first class by changing the signs of the edges in $A$.  This means that every representative of the second class can be obtained from every representative of the first class by first changing the signs of the edges in $A$ and then performing vertex flips.

\begin{proposition}\label{prop:ppcog}
Two graph embeddings have the same underlying cog if and only if they are partial Petrials.
Hence there is a 1-1 correspondence between partial Petrie duality classes of graph embeddings and cogs.
\end{proposition}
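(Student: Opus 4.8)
The plan is to carry out the whole argument on signed rotation systems, using the dictionary recalled above: cellular embeddings of a fixed graph $G$ correspond bijectively to flip-equivalence classes of signed rotation systems for $G$, each such class has a well-defined underlying cog, and changing the signs on a fixed edge set descends to flip-equivalence classes, so ``being partial Petrials with respect to some edge set $A$'' is a genuine relation on embeddings. The one input from the discussion above is that two signed rotation systems have the same underlying cog precisely when their underlying rotation systems are reversal-equivalent, the signatures being irrelevant. The easy direction is then immediate: if $\E_1$ and $\E_2$ are partial Petrials with respect to $A$, choose representative signed rotation systems $(\rho,\sigma_1)$ for $\E_1$ and $(\rho,\sigma_2)$ for $\E_2$ differing only by the sign flip on $A$; their rotation systems coincide, hence are reversal-equivalent, so $\E_1$ and $\E_2$ have the same underlying cog, which does not depend on the chosen representatives.

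For the converse, suppose $\E_1$ and $\E_2$ have the same underlying cog and pick representatives $(\rho_1,\sigma_1)$ and $(\rho_2,\sigma_2)$; after relabelling by a graph automorphism, which changes neither embedding nor its underlying cog, we may assume the two cogs are literally equal. Then at each vertex the cyclic order of $\rho_2$ is that of $\rho_1$ or its reversal, so $\rho_2$ is obtained from $\rho_1$ by reversing every vertex in some set $W$. Apply a vertex flip at each vertex of $W$ to $(\rho_1,\sigma_1)$: the rotation part becomes $\rho_2$, while the signature becomes some $\sigma_1'$ agreeing with $\sigma_1$ except on the edges with exactly one endpoint in $W$, since an edge with both endpoints in $W$ (or neither) has its sign switched an even number of times and a loop at a vertex of $W$ is never switched. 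The resulting signed rotation system $(\rho_2,\sigma_1')$ is flip-equivalent to $(\rho_1,\sigma_1)$, so it still represents $\E_1$; and it has the same rotation system as $(\rho_2,\sigma_2)$, so it is taken to $(\rho_2,\sigma_2)$ by flipping the signs of exactly the set $A$ of edges on which $\sigma_1'$ and $\sigma_2$ disagree. Thus $\E_1$ and $\E_2$ are partial Petrials with respect to $A$.

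For the final sentence, note that ``being partial Petrials with respect to some edge set'' is an equivalence relation on embeddings of $G$ --- reflexivity via $A=\emptyset$, symmetry via the same $A$, transitivity via the symmetric difference of the two edge sets --- and by what has just been proved its classes are exactly the fibres of the map $\E \mapsto (\text{underlying cog of }\E)$. That map is surjective: given any cog, orient each undirected cyclic order arbitrarily to get a rotation system, equip it with the all-$+$ signature to get an (orientable) embedding, and observe that the underlying cog of this embedding is the original one. Hence the induced map from partial Petrie duality classes of embeddings to cogs is a bijection. The only delicate points are the bookkeeping in the converse --- checking that composing the vertex flips on $W$ with the single sign change on $A$ realises precisely the partial Petrial relation, in particular that vertex switching at every vertex of $W$ changes the signs of exactly the edges in the edge-boundary of $W$ --- and the mild care needed to pass between isomorphism and equality of cogs, which the initial relabelling handles.
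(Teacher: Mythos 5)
Your proof is correct and follows essentially the same route as the paper's: both directions are argued on signed rotation systems, using vertex flips to align the rotation systems in one direction and observing that sign changes leave the rotation system (hence the cog) untouched in the other. Your version simply fills in details the paper leaves implicit --- the explicit reversal set $W$, the edge-boundary bookkeeping, and the verification that partial Petriality is an equivalence relation whose classes are the fibres of a surjective map onto cogs.
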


\begin{proof}
Suppose two embeddings have the same cog.  We may represent each by a signed rotation system, and then apply a sequence of vertex flips to one of them (which does not change the embedding) so as to make their rotation systems the same.  Then one is obtained from the other merely by changing some edge signs, so they are partial Petrials.

Conversely, suppose two embeddings are partial Petrials.  Then the signed rotation system for the second can be obtained from the signed rotation system for the first by (1) changing the signs of some edges and then (2) applying vertex flips.  Step (1) does not change the rotation system and step (2) applies vertex reversals to the rotation system, so the rotation systems are reversal-equivalent and hence have the same cog.
\end{proof}

We shall call the correspondence between the set of partial Petrial duals of a graph embedding and a cog established in Proposition~\ref{prop:ppcog} the \emph{natural correspondence} between these two objects.

\begin{remark}
The term cog was first used in~\cite[Section 3.2]{MR3086663}.  The definition of cogs used in this paper differs slightly from that give in ~\cite{MR3086663}.
There cogs were defined as graphs with a cyclic order of half-edges at each vertex (i.e., as rotation systems), but were considered up to reversal-equivalence. Thus, the cogs in this paper correspond to equivalence classes of cogs in \cite{MR3086663}. The definition of cogs used here avoids using two different names for the same type of object. As~\cite{MR3086663} was only interested in equivalence classes of cogs, this change in definitions is largely immaterial. 
\end{remark}

\subsection{Cogs in the context of ribbon graphs}\label{ss:cogrg} 
We review  a few relevant properties of ribbon graphs.   Additional background can be found in~\cite{MR3086663,zbMATH07553843}.

 A {\em ribbon graph} $\bG=\left(V,E\right)$ is a surface with boundary, represented as the union of two sets of closed discs --- a set $V$ of {\em vertices} and a set $E$ of {\em edges} --- such that: (1) the vertices and edges intersect in disjoint line segments; (2) each such line segment lies on the boundary of precisely one vertex and precisely one edge; and (3) every edge contains exactly two such line segments. 
As every ribbon graph $\bG$ can be regarded as a surface with boundary, it has some number of boundary components, which we shall denote by $b(\bG)$.

Two ribbon graphs are \emph{equivalent} if there is a homeomorphism that sends vertices to vertices, and edges to edges. We consider ribbon graphs up to this equivalence. 

Ribbon graphs are easily seen to be equivalent to cellularly embedded graphs.  Sewing discs into the boundary components of a ribbon graph, and then taking a deformation retraction reducing the ribbon vertices to points and the ribbon edges to curves in the resulting surface gives the corresponding cellularly embedded graph.  In the other direction, starting with a graph cellularly embedded in a surface and then cutting out a small neighbourhood (and its boundary) of the graph results in the corresponding ribbon graph.
Since ribbon graphs are equivalent to cellularly embedded graphs, a ribbon graph also corresponds to a flip-equivalence class of signed rotation systems.

To form the \emph{underlying graph} $G$ of a ribbon graph $\bG$ we cut each edge disc $e$ into two discs, each containing one of the line segments in which $e$ intersects a vertex disc.  These become the half-edges of $G$, and pairings of half-edges and incidence of half-edges with vertices are defined by non-empty intersection. Note that $\bG$ is a ribbon graph representation of a cellular embedding of its underlying graph.

Following~\cite{MR2869185}, the \emph{partial Petrie dual} or \emph{partial Petrial} of a ribbon graph $\bG$ with respect to an edge $e$ is the ribbon graph $\bG^{\tau(e)}$ obtained from $\bG$ as follows.  We begin by  detaching an end of $e$ from its incident vertex $v$ and specifying arcs $[a,b]$ on $v$ and $[a',b']$ on $e$ such that $\bG$ is recovered by identifying $[a,b]$ with $[a',b']$.  We then  reattach the detached end of $e$  by identifying the arcs antipodally (so that $[a,b]$ is identified with $[b',a']$). The result of this process is indicated in Figure~\ref{fig.pp}.
Partial Petrials with respect to single edges commute, so partial Petrials with respect to sets of edges are well defined.
The \emph{Petrie dual} (cf.~\cite{MR547621}) is the ribbon graph obtained by forming the partial Petrial with respect to the set of all edges.
Note that the ribbon graph $(\bG^{\tau(e)})^{\tau(e)}$ is equivalent to $\bG$.

If we are constructing $\bG^{\tau(e)}$ and we have a local clockwise direction at each endvertex of $e$, this process reverses the way in which those directions translate along the edge $e$, and so this corresponds to changing the sign of an edge in a signed rotation system representing $\bG$, agreeing with our definition of partial Petrial via signed rotation systems.
Proposition \ref{prop:ppcog} therefore applies with `ribbon graph' replacing `graph embedding'.
\begin{proposition}\label{prop:ppcog2}
Two ribbon graphs have the same underlying cog if and only if they are partial Petrials.
Hence there is a 1-1 correspondence between partial Petrie duality classes of graph embeddings and cogs.
\end{proposition}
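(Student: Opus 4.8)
The plan is to reduce Proposition~\ref{prop:ppcog2} to the already-established Proposition~\ref{prop:ppcog} by invoking the equivalence between ribbon graphs and cellularly embedded graphs, together with the correspondence between partial Petrie duality of ribbon graphs and of embeddings. First I would recall that, as noted in Subsection~\ref{ss:cogrg}, a ribbon graph $\bG$ corresponds (up to equivalence) to a unique flip-equivalence class of signed rotation systems, hence to a unique cellularly embedded graph, and conversely. Moreover, the definition of the partial Petrial $\bG^{\tau(e)}$ of a ribbon graph was shown in the paragraph preceding the statement to correspond, under this dictionary, exactly to changing the sign of the edge $e$ in a signed rotation system representing $\bG$; iterating over a set $A$ of edges, the partial Petrial $\bG^{\tau(A)}$ corresponds to changing the signs of all edges in $A$. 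Thus ``$\bG_1$ and $\bG_2$ are partial Petrials'' translates precisely to ``the associated embeddings are partial Petrials'' in the sense of Subsection~\ref{ss:rs}.

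Next I would observe that the underlying cog of a ribbon graph is, by definition, the underlying cog of its associated flip-equivalence class of signed rotation systems (equivalently, of its associated cellular embedding). Hence two ribbon graphs have the same underlying cog if and only if their associated embeddings have the same underlying cog. Combining this with the previous paragraph, the statement ``$\bG_1$ and $\bG_2$ have the same underlying cog $\iff$ $\bG_1$ and $\bG_2$ are partial Petrials'' is literally the statement of Proposition~\ref{prop:ppcog} transported along the ribbon-graph/embedding equivalence, so it holds. The ``hence'' clause giving the 1-1 correspondence between partial Petrie duality classes and cogs follows immediately, exactly as in Proposition~\ref{prop:ppcog}: the equivalence relation ``being partial Petrials'' partitions ribbon graphs into classes, and the map sending a class to its common underlying cog is well-defined and, by the biconditional, injective; surjectivity follows since every cog is the underlying cog of some rotation system and hence of some ribbon graph.

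The only point requiring a little care — and the step I expect to be the main (though minor) obstacle — is making precise the claim that the ribbon-graph partial Petrial corresponds to an edge-sign change in a \emph{signed rotation system representing $\bG$}, independently of the choices made in the construction of $\bG^{\tau(e)}$ (the detached end, the arcs $[a,b]$ and $[a',b']$). This was asserted in the paragraph before the proposition via the observation that antipodal reattachment reverses how local clockwise directions translate along $e$; I would simply note that the resulting signed rotation system is well-defined up to flip-equivalence (a different choice of local clockwise directions changes the representative by vertex flips), which is all that is needed since ribbon graphs correspond to flip-equivalence classes. Everything else is a direct transport of Proposition~\ref{prop:ppcog}, so the proof is short.

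\begin{proof}
By the discussion in Subsection~\ref{ss:cogrg}, a ribbon graph corresponds to a flip-equivalence class of signed rotation systems, hence to a cellularly embedded graph, and this correspondence is a bijection respecting underlying cogs. Moreover, forming the partial Petrial $\bG^{\tau(e)}$ of a ribbon graph corresponds, under this correspondence, to changing the sign of $e$ in a signed rotation system representing $\bG$ (this representative being well-defined up to flip-equivalence, which is all we need), and iterating this over a set of edges corresponds to changing the signs of that set of edges. Thus two ribbon graphs are partial Petrials if and only if the corresponding graph embeddings are partial Petrials, and they have the same underlying cog if and only if the corresponding graph embeddings have the same underlying cog. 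The claim now follows from Proposition~\ref{prop:ppcog}.
\end{proof}
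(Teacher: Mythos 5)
Your proof is correct and follows essentially the same route as the paper, which likewise justifies Proposition~\ref{prop:ppcog2} by identifying the ribbon-graph partial Petrial with an edge-sign change in a representing signed rotation system and then transporting Proposition~\ref{prop:ppcog} along the ribbon-graph/embedded-graph equivalence. Your added remark on the well-definedness of the sign change up to flip-equivalence is a reasonable (if minor) elaboration of what the paper asserts implicitly.
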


\begin{figure}[t]
\centering
\begin{tabular}{ccc}
\labellist
\small\hair 2pt
\pinlabel $a=a'$ at 84 7
\pinlabel $b=b'$ at 84 84
\endlabellist
\includegraphics[height=20mm]{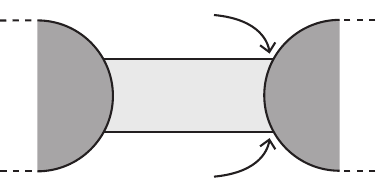}  &
\raisebox{9mm}{\includegraphics[width=15mm]{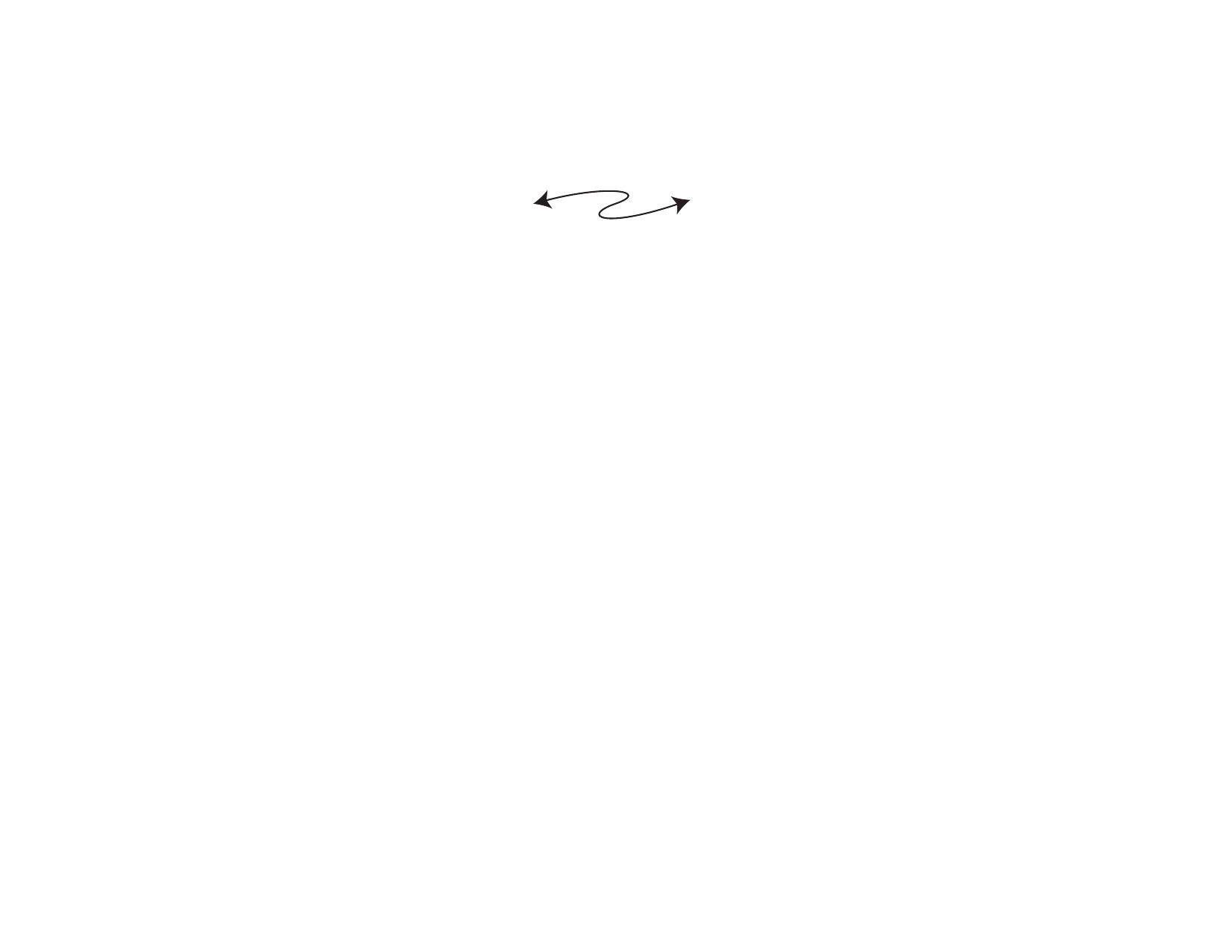}} &
\labellist
\small\hair 2pt
\pinlabel $a=b'$ at 84 7
\pinlabel $b=a'$ at 84 84
\endlabellist
\includegraphics[height=20mm]{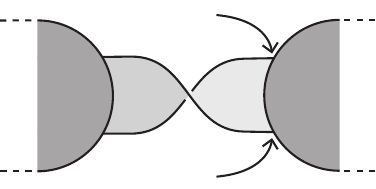}  
\end{tabular}
\caption{Forming a partial Petrial $\bG^{\tau(e)}$ at an edge $e$ of a ribbon graph $\bG$ .}
\label{fig.pp}
\end{figure}

\medskip

\begin{remark}
An intuitive model of cog classes arises from Proposition~\ref{prop:ppcog2}.  
Just as ribbon graphs may be thought of as ``graphs with discs for vertices and ribbons for edges'',  
cogs may be thought of as ``graphs with discs for vertices and strings for edges''.
\end{remark}

We will also work with medial graphs of embedded graphs.  Recall that if a graph $\bG$ is cellularly embedded, we construct its medial graph $\bG_m$ by placing
a vertex of degree 4 on each edge of  $\bG$ and then drawing the edges of the medial
graph by following the face boundaries of $\bG$.  When $\bG$ is represented as a ribbon graph, it is often easiest to represent it as a cellularly embedded graph and then construct $\bG_m$ as above. 
 If necessary, the representation of $\bG_m$ may subsequently be converted to that of a ribbon graph.

\subsection{Cogs in the context of gems}\label{ss:coggem}
We shall make extensive use of the following description of cogs as cubic graphs equipped with a perfect matching. This description is motivated by the description of embedded graphs as graph-encoded maps, or \emph{gems} (see~\cite{zbMATH00824939,zbMATH03728291} for details on gems).

\begin{definition}
  A \emph{graph-encoded cog}, or \emph{gec}, consists of a graph in which each component is either a cubic graph with a specified perfect matching, or is a free loop. The edges in the perfect matching are called \emph{e-edges} (as they will correspond to the edges of a cog). 
The edges not in the perfect matching are \emph{v-edges} and comprise a disjoint union of cycles (including free loops), called \emph{v-cycles} (as each such cycle will correspond to a vertex in a cog).
The half-edges are therefore divided into \emph{e-half-edges} and \emph{v-half-edges}.
Two gecs are \emph{isomorphic} if there is an isomorphism between their underlying graphs that preserves the sets of e-edges and v-edges.
\end{definition}

\begin{figure}[t!]
\centering
\begin{subfigure}[b]{0.45\textwidth}
\labellist
\small\hair 2pt
\pinlabel $v_1$ at 85 60
\pinlabel $v_2$ at 235 40
\pinlabel $v_3$ at 360 140
\pinlabel $v_4$ at 360 60
\pinlabel $v_5$ at 360 6
\pinlabel $e_1$ at 60 125
\pinlabel $e_2$ at 175 73
\pinlabel $e_3$ at 228 140
\pinlabel $e_4$ at 280 121
\pinlabel $e_5$ at 300 73
\pinlabel $e_6$ at 290 18
\endlabellist
\includegraphics[scale=0.35]{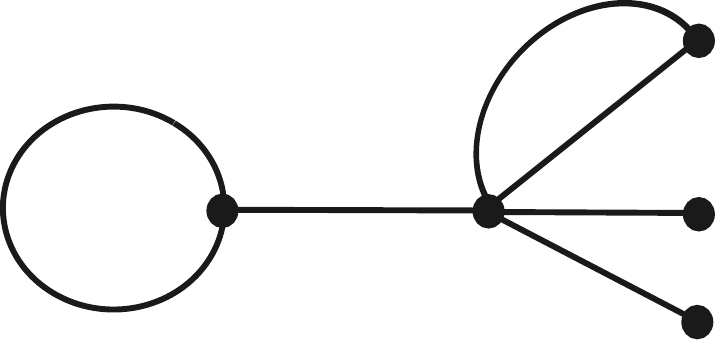}
\vspace{10mm}
\caption{A cog.}
\label{f.cg1}
\end{subfigure}
\hfill
\begin{subfigure}[b]{0.45\textwidth}
\centering
\includegraphics[scale=1.1]{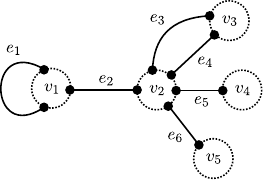}
\caption{A gec. The v-edges are dotted, the e-edges are solid.}
\label{f.cg2}
\end{subfigure}
\caption{A cog with its corresponding gec.} 
\label{fig:cg}
\end{figure}

We see in the following proposition that gecs exactly encode cogs. The correspondence is illustrated in Figure~\ref{fig:cg}. Figure \ref{f.cg1} shows a cog, where we use the natural clockwise (or anticlockwise) direction to give the ordering at each vertex, and Figure \ref{f.cg2} shows the corresponding gec.
In figures of gecs,  we shall use solid lines to depict e-edges and dotted lines to depict v-edges. 

\begin{proposition}\label{prop:zxc}
There is a 1-1 correspondence between the set of cogs (up to isomorphism) and the set of gecs (up to isomorphism).
\end{proposition}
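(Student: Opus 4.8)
The plan is to construct explicit maps in both directions between cogs and gecs, and verify they are mutually inverse and respect isomorphism.

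First I would describe the map from cogs to gecs. Given a cog $C$, for each vertex $v$ of degree $d \geq 1$ with undirected cyclic order $\ouco h_1, h_2, \dots, h_d \cuco$ of its incident half-edges, I create a v-cycle: a cycle on $d$ vertices $w_1, \dots, w_d$ (one per half-edge), joined in a cycle by $d$ v-edges following the cyclic order. Each new vertex $w_i$ then has one remaining free slot, into which we attach an e-half-edge. Since the half-edges of $C$ are paired into edges, each edge $e = \{h, h'\}$ of $C$ gives an e-edge joining the vertex created for $h$ to the vertex created for $h'$; a loop at $v$ contributes an e-edge between two vertices of the same v-cycle. A degree-$0$ vertex (isolated vertex) would produce a $v$-cycle of length... here one must be slightly careful: an isolated vertex has no incident half-edges, so arguably it contributes nothing, or one adopts the convention that it corresponds to a free loop — I would follow whatever convention the paper uses for isolated vertices and free loops (the excerpt notes free loops count as cycles of length $0$ and as components). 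A free loop of $C$, if cogs are allowed to have them, maps to a free loop of the gec. The result is visibly a graph each of whose components is either cubic with the specified perfect matching of e-edges (every $w_i$ meets exactly two v-edges and one e-edge) or a free loop, so it is a gec; call it $\gamma(C)$.

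Next I would describe the inverse map from gecs to cogs. Given a gec $\Gamma$, each v-cycle $Z$ becomes a vertex $v_Z$ of the cog. Each e-edge of $\Gamma$ becomes an edge of the cog, and each e-half-edge $\eta$, incident to a gec-vertex lying on a unique v-cycle $Z$, becomes a half-edge of the cog incident to $v_Z$. The undirected cyclic order of the half-edges at $v_Z$ is read off by traversing the v-cycle $Z$: the e-half-edges hanging off $Z$ inherit the cyclic order in which their attachment points occur around $Z$, taken up to reversal since a cycle has no preferred direction. A free loop of $\Gamma$ becomes a free loop of the cog. This gives a cog $\delta(\Gamma)$.

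Then I would check $\delta \circ \gamma = \mathrm{id}$ and $\gamma \circ \delta = \mathrm{id}$ up to isomorphism, which is essentially bookkeeping: the v-cycle built from a vertex $v$ recovers exactly the half-edges at $v$ in their original undirected cyclic order, and conversely traversing a v-cycle and then rebuilding it reproduces the same cyclic graph. Finally I would check that an isomorphism of cogs induces an isomorphism of the associated gecs and vice versa — a cog isomorphism preserves incidence, edge-pairing and the undirected cyclic orders, which is exactly the data used to build the v-cycles and e-edges, so it transports to a gec isomorphism preserving e-edges and v-edges; the converse is similar, reading the cyclic order off the v-cycles. The main obstacle is not conceptual but care with degenerate cases: loops of the cog (where both half-edges of one edge lie on the same v-cycle), vertices of degree $1$ or $2$ (where the v-cycle is a single vertex with a loop, or a pair of parallel edges — so the notion of ``cubic graph'' and ``perfect matching'' must be interpreted to allow such multigraph v-cycles, matching the paper's convention), isolated vertices, and free loops. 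I would state explicitly which conventions handle these and confirm the two maps behave correctly on each, so that the claimed bijection is genuinely well-defined on all of the stated classes.
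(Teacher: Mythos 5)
Your proposal is correct and follows essentially the same route as the paper: blow each cog vertex up into a v-cycle using the undirected cyclic order (with isolated vertices becoming free loops), contract v-cycles back to vertices for the inverse, and check the two constructions are mutually inverse and respect isomorphism. The only small point to fix is that, for consistency with your forward map, a free loop of the gec should become an \emph{isolated vertex} of the cog (cogs themselves have no free loops in the paper's conventions), which is exactly the convention the paper adopts.
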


\def\buco{\eta}

\begin{proof}

Observe that an undirected cyclic ordering of a finite set $S$ is equivalent to a bijection between $S$ and the vertices in an undirected cycle.

Given a cog, we find a gec by blowing each vertex up into a cycle.  For each vertex $v$ in the cog let $H_v$ be the set of half-edges incident with $v$; the undirected cyclic ordering of $H_v$ gives a bijection $\buco_v$ from $H_v$ to the vertex set of an undirected cycle $C_v$.
If $H_v = \emptyset$ (so $v$ is isolated), $C_v$ is a free loop.
Form the gec whose v-cycles are the cycles $C_v$, whose e-half-edges are the half-edges of the cog with the same pairing into edges, and with each half-edge $h \in H_v$ being incident in the gec with $\buco_v(h) \in V(C_v)$.

Given a gec, we contract the v-cycles to obtain a cog.  For each v-cycle $C$ let $H_C$ be the set of e-half-edges incident with a vertex of $C$; incidence gives a bijection $\buco_C$ from $H_C$ to $V(C)$.  If we contract each v-cycle (including each free loop) to a vertex $v_C$, then the set of half-edges incident with $v_C$ is $H_C$, and $\buco_C$ gives an undirected cyclic ordering of $H_C$, so we have a cog.

These constructions are easily seen to be inverses, proving the result.
\end{proof}

We shall call the correspondence used in the proof of Proposition~\ref{prop:zxc} the \emph{natural correspondence} between cogs and gecs, and talk about the \emph{corresponding gec} for a cog or \emph{corresponding cog} for a gec.

\begin{remark} Readers familiar with gems may note that the gec corresponding to the underlying cog of a graph embedding can be obtained from the gem of the embedding.  This is a cubic graph whose edges are partitioned into three $1$-factors $\ea, \ev, \ef$, where the cycles in $\ea \cup \ev$ represent vertices, the cycles in $\ea \cup \ef$ represent faces, and the cycles in $\ev \cup \ef$ are $4$-cycles representing edges.  The gec can be formed from the gem by contracting all edges in $\ev$, and replacing the components of $\ef$, now digons, by single edges. These single edges are the e-edges of the gec, while the components of $\ea$ are the v-cycles of the gec.
\end{remark}

We will also need to use structures that extend the idea of a gec.
A \emph{generalised gec} is a graph, possibly with free loops, whose edges may be divided into \emph{e-edges}, which form a matching, and \emph{v-edges} (including all free loops), which form a subgraph of maximum degree at most $2$.
A component of the spanning subgraph whose edge set consists of all v-edges is a \emph{v-component}, which may be a path (including a single vertex) or a cycle (including a free loop).

Deleting edges and extracting e-edges of a gec yields a generalised gec.  Moreover, every generalised gec can be obtained from some gec by deletion of e-edges and v-edges, and also from some gec by a combination of extraction and deletion of e-edges.

\subsection{Cog invariants}\label{ss:coginv}
We now define the objects we wish to construct and study in the following sections.
\begin{definition}\label{def:coginv}
 A \emph{cog invariant} is a function on cogs that takes the same value on isomorphic cogs. A \emph{cog polynomial} is a polynomial-valued cog invariant.
\end{definition}

By Propositions~\ref{prop:ppcog2} and~\ref{prop:zxc}, functions defined on gecs or on sets of ribbon graphs closed under taking partial Petrials immediately give rise to cog invariants.

\def\cmbb{ }

\section{A cog polynomial via the matching polynomial} \label{sec:matching_polynomial}
In this section  we demonstrate our first strategy for finding cog polynomials: we add structure to a known polynomial invariant of abstract graphs so that it captures cog properties. We apply this strategy here to the matching polynomial.

Following Farrell~\cite{zbMATH03523610}, the \emph{(bivariate) matching polynomial}, $\Match(G;x,y)$, of a loopless graph  $G=(V,E)$ is defined as the generating function 
\[ \Match(G;x,y) = \sum_{i=0}^{\lfloor |V|/2 \rfloor} \alpha_ix^{|V|-2i}y^{i}, \]
where $\alpha_i$ is the number of matchings in $G$ containing exactly $i$ edges. The matching polynomial satisfies a deletion-extraction relation:
\begin{equation}\label{ed:mde}
\Match(G;x,y) =  \begin{cases}
\Match(G \ba e;x,y) +y\, \Match(G\ext e;x,y) &\text{for an edge $e$ of $G$,}\\
x^{k(G)} & \text{if $G$ is edgeless.}
\end{cases}
\end{equation}

Our strategy is to use the natural correspondence between cogs and gecs, and to  turn the matching polynomial into a cog invariant $\calM(\gec{G};x,y)$ by applying the deletion-extraction relation~\eqref{ed:mde} to \emph{only} the e-edges of a gec. 
A minor technical issue to overcome is that if we start with a gec and delete or extract one of its e-edges then the result generally will not be a gec, and so we take the domain to be generalised gecs instead. 

As we shall soon see, $\calM(\gec{G};x,y)$ encodes information about v-cycles that are saturated by sets of e-edges.  For this reason, we call $\calM(\gec{G};x,y)$ the \emph{saturation polynomial} of a cog.

\begin{definition}\label{def:mc}
Let $\gec{G}=(V,E)$ be a generalised gec and $\mathcal{E}\subset E$ be its set of e-edges. Then the \emph{saturation polynomial}
$\calM(\gec{G};x,y)$ is  defined recursively by
\begin{equation}\label{ed:cde}
\calM(\gec{G};x,y) =  \begin{cases}
\calM(\gec{G} \ba e;x,y) +y\, \calM(\gec{G}\ext e;x,y) &\text{for an e-edge $e\in \mathcal{E}$,}\\
x^{k(\gec{G})} & \text{if $\mathcal{E}=\emptyset$.}
\end{cases}
\end{equation}
\end{definition}
This definition does not a priori give a well-defined function on gecs  as it potentially depends upon the order of edges to which it is applied. However, we shall shortly show that it is independent of this choice and thus does indeed
give a well-defined invariant.

\begin{figure}
\labellist
\small\hair 2pt
\pinlabel $y$ at 395 280
\pinlabel $y$ at 221 145
\pinlabel $y$ at 515 145
\pinlabel $y$ at 102 55
\pinlabel $y$ at 250 55
\pinlabel $y$ at 416 55
\pinlabel $y$ at 560 55
\pinlabel $\emptyset$ at 560 15
\pinlabel $x^2$ at 30 -13
\pinlabel $x^3y$ at 110 -13
\pinlabel $x^2y$ at 188 -12
\pinlabel $x^2y^2$ at 260 -12
\pinlabel $x^2y$ at 355 -12
\pinlabel $x^2y^2$ at 425 -12
\pinlabel $x^2y^2$ at 493 -12
\pinlabel $y^3$ at 561 -12
\pinlabel $=x^2+x^3y+2x^2y+3x^2y^2+y^3$ at 295 -55
\endlabellist
\includegraphics[width=8cm]{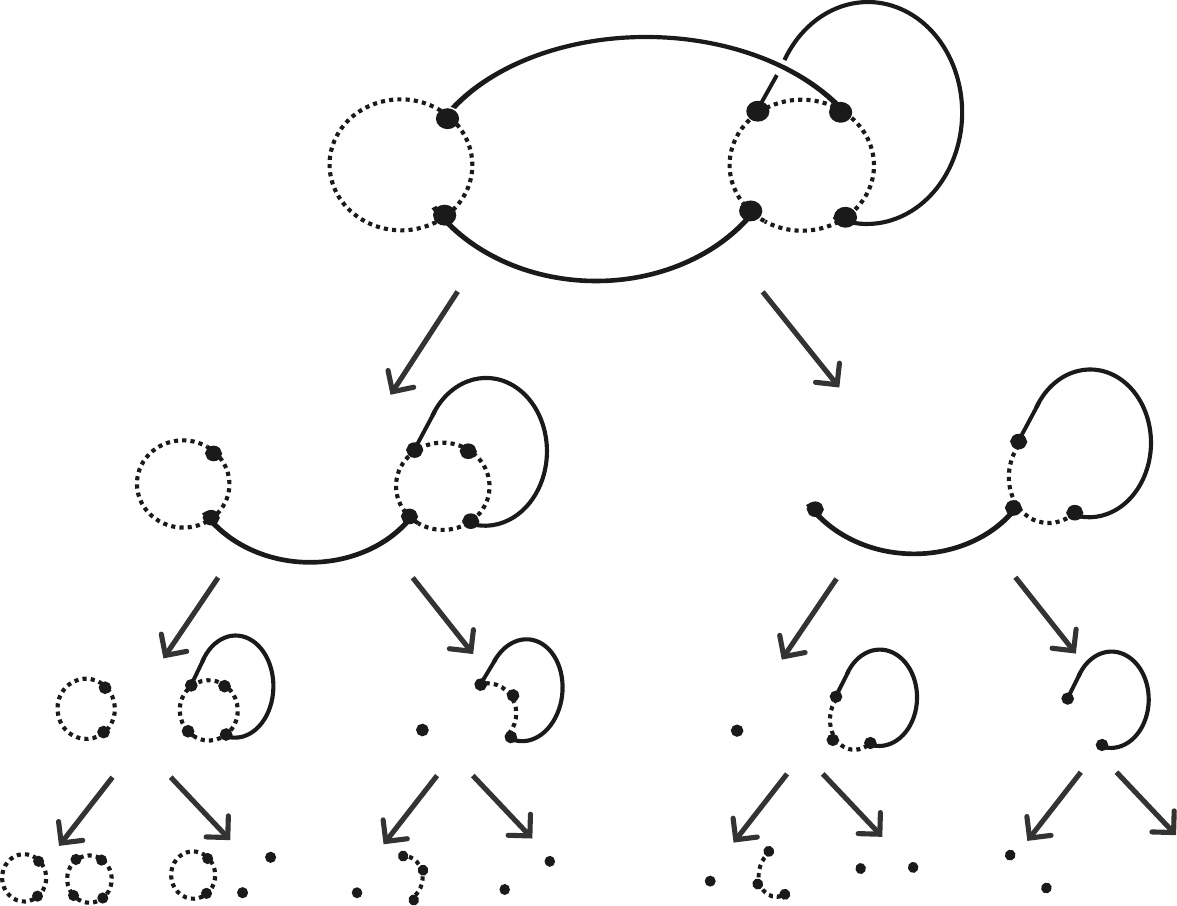}

\vspace{10mm}
\caption{A computation of $\calM(\gec{G};x,y)$.}
\label{fig:eg1}
\end{figure}

Figure~\ref{fig:eg1} shows a computation of $\calM(\gec{G};x,y)$ using Definition~\ref{def:mc}. Example~\ref{ex:M_polynomial}  shows that 
$\calM(\gec{G};x,y)$ can distinguish gecs arising from cogs that have the same underlying graph. Thus, the saturation polynomial is genuinely a cog polynomial rather than just a graph polynomial.
\begin{example}\label{ex:M_polynomial}
Let $\gec{G}_1$ and $\gec{G}_2$ be the gecs shown in Figure~\ref{fig:eg2}. Note that the gecs both correspond to cogs on the same underlying abstract graph (consisting of two vertices, two parallel edges, with one loop). 
Then 
\[ \calM(\gec{G}_1;x,y) 
=x^2+3x^2y+xy^2+2x^2y^2+y^3,\] 
  and
\[ \calM(\gec{G}_2;x,y) 
=x^2+x^3y+2x^2y+3x^2y^2+y^3.\]
\end{example}

\begin{figure}
\labellist
\small\hair 2pt
\pinlabel $\gec{G}_1$ at 115 -15
\pinlabel $\gec{G}_2$ at 410 -15
\endlabellist
\includegraphics[scale=0.6]{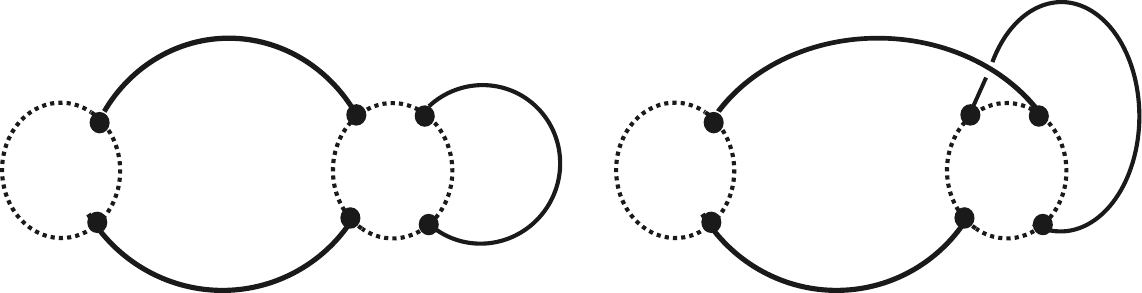}

\vspace{+4mm}
\caption{Two gecs that represent nonisomorphic cogs.}
\label{fig:eg2}
\end{figure}

\bigskip

We now show that $\calM(\gec{G};x,y)$ is well-defined. For this we record the following immediate consequence of the fact that that two e-edges $e$ and $f$ cannot share a vertex.
\begin{proposition}\label{prop:2dag}
Let $\gec{G}=(V,E)$ be a generalised gec,  $\mathcal{E}$ be its set of e-edges, and $e,f\in \mathcal{E}$. Then 
$(\gec{G}\ext e)\ext f = (\gec{G}\ext f)\ext e$ and 
$(\gec{G}\ext e)\ba f = (\gec{G}\ba f)\ext e$. 
\end{proposition}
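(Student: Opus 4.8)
The plan is to reduce both identities to the single fact that $e$ and $f$ have disjoint endvertex sets — which holds precisely because $\mathcal{E}$ is a matching — and then to invoke the commutativity of vertex- and edge-deletion on the underlying graph. Throughout, I will use that extraction of an e-edge is by definition a vertex deletion: writing $U_e$ and $U_f$ for the sets of endvertices of $e$ and $f$, the definition of extraction gives $\gec{G}\ext e = \gec{G}\ba U_e$ and $\gec{G}\ext f = \gec{G}\ba U_f$. Since $e,f\in\mathcal{E}$ and no two e-edges share a vertex, I have $U_e\cap U_f=\emptyset$; equivalently, neither of $e,f$ is incident to a vertex of the other.

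For the first identity I would appeal directly to the observation already recorded in Section~\ref{sec:back}: when $e$ and $f$ have no endvertices in common, $(\gec{G}\ext e)\ext f = (\gec{G}\ext f)\ext e = \gec{G}\ba U$, where $U = U_e\cup U_f$. Applying this to the underlying graph immediately yields the first equality, and since vertex deletion simply discards vertices and their incident edges, the partition of the surviving edges into e-edges and v-edges is carried along unchanged. Hence the equality holds as an identity of generalised gecs, not merely of underlying graphs.

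The second identity is the one requiring a little care, and this is where the matching hypothesis does its real work. Here I must first check that both sides are defined. The operation $\gec{G}\ext e = \gec{G}\ba U_e$ deletes the vertices in $U_e$ together with every edge incident to them; since $f$ is not incident to $U_e$, the edge $f$ survives, so $(\gec{G}\ext e)\ba f$ makes sense. Conversely, deleting $f$ removes no vertices, so $U_e$ persists and $(\gec{G}\ba f)\ext e$ makes sense. It then remains to observe that vertex deletion and edge deletion commute whenever the deleted edge is not incident to the deleted vertices: both $(\gec{G}\ba U_e)\ba f$ and $(\gec{G}\ba f)\ba U_e$ are obtained from $\gec{G}$ by removing exactly the vertices in $U_e$, all edges incident to $U_e$, and the edge $f$. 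As before, the e-edge/v-edge labelling of the remaining edges is untouched, giving equality as generalised gecs.

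The main obstacle, such as it is, lies entirely in the second identity: one must not take for granted that $\ba f$ and $\ext e$ commute as formal operations, since $\ba f$ is only defined when $f$ is present, and a careless order of operations could attempt to delete an edge that has already been removed. The matching property is exactly what rules this out, by guaranteeing that $f$ is not among the edges incident to $U_e$. Once this is noted, both identities become routine, since every operation involved is ultimately the removal of a fixed set of vertices and edges from $\gec{G}$, and such removals manifestly commute and preserve the gec structure.
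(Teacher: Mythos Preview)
Your proof is correct and follows exactly the approach the paper takes: the paper simply records the proposition as ``an immediate consequence of the fact that two e-edges $e$ and $f$ cannot share a vertex'' and provides no further argument. Your write-up is a faithful expansion of that one-line justification, with the added (and appropriate) care about well-definedness in the second identity.
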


In light of Proposition~\ref{prop:2dag} we can make the following definition. Let $\gec{G}=(V,E)$ be a generalised gec, let $\mathcal{E}$ be its set of e-edges, and suppose $\mathcal{A}\subseteq \mathcal{E}$. Then $\gec{G}\ext \mathcal{A}$ is the result of extracting all of the edges in $\mathcal{A}$ from $\gec{G}$ in any order.

The following theorem expresses $\calM(\gec{G};x,y)$ in a form that is independent of the order of edge deletion and extraction and this shows the saturation polynomial is well-defined.

\begin{theorem}\label{thm:mwd}
Let $\gec{G}=(V,E)$ be a generalised gec and $\mathcal{E}$ be its set of e-edges. Then
\[
\calM(\gec{G};x,y) 
= \sum_{\mathcal{A} \subseteq \mathcal{E}} y^{|\mathcal{A}|}x^{k(\gec{G}\ext \mathcal{A} \setminus (\mathcal{E}-\mathcal{A}))}
= \sum_{\mathcal{A} \subseteq \mathcal{E}} y^{|\mathcal{A}|}x^{k_{\textup{v}}(\gec{G}\ext \mathcal{A} )},
\]
where $k_{\textup{v}}(\gec{G})$ means the number of v-components of a generalised gec $\gec{G}$.
\end{theorem}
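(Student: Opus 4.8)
The plan is to prove the first equality by induction on $|\mathcal{E}|$, following the recursive definition, and then to derive the second equality by showing that the two exponents of $x$ agree for every $\mathcal{A}\subseteq\mathcal{E}$. For the base case $\mathcal{E}=\emptyset$ the only term in the sum is $\mathcal{A}=\emptyset$, contributing $y^0 x^{k(\gec{G})}$, which matches the base case of \eqref{ed:cde}. For the inductive step, pick an e-edge $e\in\mathcal{E}$. Applying \eqref{ed:cde} gives $\calM(\gec{G};x,y)=\calM(\gec{G}\ba e;x,y)+y\,\calM(\gec{G}\ext e;x,y)$. Both $\gec{G}\ba e$ and $\gec{G}\ext e$ are generalised gecs with e-edge set $\mathcal{E}\setminus\{e\}$ (deleting a non-loop e-edge or extracting it both remove $e$ from the matching and do not create new e-edges), so the inductive hypothesis applies to each. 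Expanding, $\calM(\gec{G}\ba e;x,y)=\sum_{\mathcal{A}\subseteq\mathcal{E}\setminus\{e\}} y^{|\mathcal{A}|}x^{k((\gec{G}\ba e)\ext\mathcal{A}\setminus((\mathcal{E}\setminus\{e\})-\mathcal{A}))}$ and similarly for $\gec{G}\ext e$ with an extra factor of $y$. Using Proposition~\ref{prop:2dag} to commute the operation at $e$ past those at the edges of $\mathcal{A}$, one identifies the $\gec{G}\ba e$ sum with the terms of $\sum_{\mathcal{A}\subseteq\mathcal{E}}$ having $e\notin\mathcal{A}$ and the $y\cdot\gec{G}\ext e$ sum with the terms having $e\in\mathcal{A}$; this reassembles the full sum over $\mathcal{A}\subseteq\mathcal{E}$.

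For the second equality I would argue that $k(\gec{G}\ext\mathcal{A}\setminus(\mathcal{E}-\mathcal{A}))=k_{\textup{v}}(\gec{G}\ext\mathcal{A})$ for every $\mathcal{A}\subseteq\mathcal{E}$. The key observation is that in $\gec{G}\ext\mathcal{A}$, the remaining e-edges are exactly those in $\mathcal{E}-\mathcal{A}$ whose endvertices have survived extraction; deleting all of $\mathcal{E}-\mathcal{A}$ from $\gec{G}\ext\mathcal{A}$ leaves precisely the spanning subgraph on the v-edges, so its ordinary components are exactly the v-components of $\gec{G}\ext\mathcal{A}$. Hence $k(\gec{G}\ext\mathcal{A}\setminus(\mathcal{E}-\mathcal{A}))$, which equals $k$ of that v-edge subgraph (note isolated vertices count as path v-components, and free loops as cyclic ones), is by definition $k_{\textup{v}}(\gec{G}\ext\mathcal{A})$. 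A small point to check is that extracting an e-edge whose endvertices lie on v-paths may merge, split, or delete v-components, but this only affects the count on both sides identically since both sides are computed in the same graph $\gec{G}\ext\mathcal{A}$.

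The main obstacle, and the place requiring the most care, is the bookkeeping in the inductive step: matching up the index sets after deletion/extraction at $e$. One must verify that $(\gec{G}\ba e)\ext\mathcal{A}\setminus((\mathcal{E}\setminus\{e\})-\mathcal{A})=\gec{G}\ext\mathcal{A}\setminus(\mathcal{E}-\mathcal{A})$ (the left side deletes $e$ and then deletes the rest; the right side, with $e\notin\mathcal{A}$, deletes $e$ among the rest) and that $(\gec{G}\ext e)\ext\mathcal{A}\setminus((\mathcal{E}\setminus\{e\})-\mathcal{A})=\gec{G}\ext(\mathcal{A}\cup\{e\})\setminus(\mathcal{E}-(\mathcal{A}\cup\{e\}))$. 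Both identities are consequences of Proposition~\ref{prop:2dag} together with the trivial fact that deletion of distinct edges commutes and commutes with extraction of edges with no shared endvertex; since e-edges form a matching, no two e-edges share an endvertex, so all the relevant operations commute and everything is well-defined. Once these identities are in hand the two sums reassemble mechanically, and the theorem follows.
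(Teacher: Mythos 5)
Your proof is correct and follows essentially the same route as the paper's: both expand the deletion--extraction recursion over all e-edges, identify the resulting summands with the subsets $\mathcal{A}\subseteq\mathcal{E}$ of extracted edges, and invoke Proposition~\ref{prop:2dag} (plus the fact that e-edges form a matching) to commute the operations so that all extractions come first --- you merely package this as an induction on $|\mathcal{E}|$ where the paper fixes a linear order and expands the recursion tree directly. Your explicit check of the second equality, namely that deleting the surviving e-edges $\mathcal{E}-\mathcal{A}$ from $\gec{G}\ext\mathcal{A}$ leaves exactly the spanning v-edge subgraph so that its components are the v-components, is correct and fills in a detail the paper leaves implicit.
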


\begin{proof}
Assign an arbitrary linear order to the edges of $\gec{G}$ and compute $\calM(\gec{G};x,y)$ by applying Equation~\eqref{ed:cde} to e-edges with respect to this order. The resulting summands  of $\calM(\gec{G};x,y)$ (before collecting terms) are in bijection with the subsets $\mathcal{A}\subseteq \E$ where $\mathcal{A}$ is the set of e-edges that were extracted to obtain the summand (and the e-edges in $\E-\mathcal{A}$ were deleted). By Proposition~\ref{prop:2dag} we can first extract all the edges of $\mathcal{A}$, recording a factor of $y$ for each.  We can then delete all the e-edges of $\E-\mathcal{A}$ from $\gec{G}\ext \mathcal{A} $  and record a factor of $x$ for each of the resulting components. 
 Thus, each subset $\mathcal{A}$ of the e-edges gives rise to a term $y^{|\mathcal{A}|}x^{k(\gec{G}\ext \mathcal{A} \setminus (\mathcal{E}-\mathcal{A}))}$ and the result follows.
\end{proof}

\medskip

By  Proposition~\ref{prop:zxc}, the invariant  $\calM$ gives rise to a cog invariant: describe a cog as a gec and compute its saturation polynomial.   
We will now express this invariant  directly in terms of cogs. 
 For this let $\cog{G}=(V,E)$ be a cog, $B$ a subset of its edges, and $v$ a vertex. Consider the unordered cyclic sequence $\sigma_v$ of half-edges incident to $v$. Each half-edge is either part of an edge in $B$ or is not. The half-edges from edges in $B$ give subsequences of $\sigma_v$. We call such a subsequence a \emph{$B$-segment} at $v$. So a $B$-segment is a maximal subsequence of $\sigma_v$ consisting of edges in $B$. (The empty sequence is not a $B$-segment.) 
 We let $\seg(v,B)$ denote the number of $B$-segments at $v$. 
 Finally we use $\seg(B)$ to denote the total number of $B$-segments, i.e., $\seg(B) = \sum_{v\in V} \seg(v,B)$. 

 For example, for the cog in Figure~\ref{f.cg1} and with $B=\{e_2, e_4,e_6 \}$, we have $\seg(v_1,B)=\seg(v_3,B)=\seg(v_5,B)=1$, $\seg(v_4,B)=0$, $\seg(v_2,B)=2$, and $\seg(B)=5$.

  Note that when some, but not all, the half-edges at $v$ are in $B$ then $\seg(v,B)$ also equals the number of maximal subsequences of $\sigma_v$ consisting of edges not in $B$. 
However, when there are either no half-edges of $B$ at $v$ or they are all in $B$ this is no longer true. Indeed, $\seg(v,\emptyset)=0$ and $\seg(v,E)=1$. 
  Additionally, as long as not every edge is in $B$, then $\seg(v,B)$  equals the number of times we change between a half-edge
in $B$ and a half-edge not in $B$ as we follow the unordered cyclic sequence $\sigma_v$, but this is not true when every edge is in $B$. A similar observation holds if we count number of times we change between a half-edge not in $B$ to a half-edge  in $B$.

The following theorem describes $ \calM(\gec{G};x, y)$  in terms of cogs.
\begin{theorem}\label{thm:rda}
Let $\cog{G}=(V,E)$ be a cog and $\gec{G}$ be its corresponding gec. Then
\[
 \calM(\gec{G};x, y) = \sum_{B\subseteq E}   y^{|E|-|B|} x^{\seg(B)+\iota(\cog{G})}, 
 \]
where $\seg(B)$ is the number of $B$-segments in the cog $\cog{G}$, and $\iota(\cog{G})$ the number of isolated vertices in $\cog{G}$.
\end{theorem}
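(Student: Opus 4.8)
The plan is to derive the formula from the already-established expression in Theorem~\ref{thm:mwd}, namely
\[
\calM(\gec{G};x,y) = \sum_{\mathcal{A} \subseteq \mathcal{E}} y^{|\mathcal{A}|}x^{k_{\textup{v}}(\gec{G}\ext \mathcal{A})},
\]
by translating each ingredient of the summand into cog language under the natural correspondence of Proposition~\ref{prop:zxc}. Under that correspondence the e-edges $\mathcal{E}$ of $\gec{G}$ are exactly the edges $E$ of $\cog{G}$, so a subset $\mathcal{A}\subseteq\mathcal{E}$ corresponds to a subset $A\subseteq E$; I will match the notation by writing $B = E - A$, so that $|\mathcal{A}| = |E| - |B|$, which already accounts for the exponent of $y$. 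The entire content of the proof is therefore the identity
\[
k_{\textup{v}}\bigl(\gec{G}\ext \mathcal{A}\bigr) = \seg(B) + \iota(\cog{G}),
\]
where $B = E - A$ is the complementary set of edges.

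First I would describe $\gec{G}\ext\mathcal{A}$ concretely. Extracting an e-edge $e$ from a gec removes $e$ together with its two incident v-half-edges, i.e.\ it removes two vertices (one on each v-cycle meeting $e$) together with the two v-edges at each of those vertices. So $\gec{G}\ext\mathcal{A}$ is obtained from $\gec{G}$ by deleting, for each edge $e\in A$, the two vertices of the v-cycles where the two ends of $e$ attach. The remaining structure consists only of v-edges (all e-edges are gone: those in $A$ by extraction, those in $B=E-A$ automatically since extraction of $A$ already removed all vertices, hence all e-half-edges — wait, this needs care: extraction removes only the vertices at ends of edges in $A$, so e-edges in $B$ may still be present). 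Let me restate: after extracting $\mathcal{A}$, the surviving e-edges are precisely those in $B$ whose \emph{both} endvertices (in the gec) survived; but an endvertex of a $B$-edge survives unless it happens to coincide with an endvertex of an $A$-edge, which is impossible since e-edges form a matching. So all edges of $B$ survive as e-edges, and $\gec{G}\ext\mathcal{A}$ is a generalised gec whose e-edges are exactly $B$; its v-components are what we must count.

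The key step is then a local analysis at each v-cycle $C_v$ of $\gec{G}$, corresponding to a vertex $v$ of $\cog{G}$. Walking around $C_v$, the vertices of $C_v$ carry e-half-edges belonging alternately to $A$-edges and $B$-edges and edges incident to other vertices — more precisely each vertex of $C_v$ has exactly one e-half-edge, and the cyclic sequence of these e-half-edges is exactly the sequence $\sigma_v$. Extracting $\mathcal{A}$ deletes exactly the vertices of $C_v$ whose e-half-edge lies in an $A$-edge, breaking $C_v$ (a cycle, i.e.\ one v-component) into a disjoint union of paths, one path for each maximal run of vertices whose e-half-edges are \emph{not} in $A$ — that is, are in $B$. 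By definition such a maximal run is exactly a $B$-segment at $v$, so $C_v$ contributes $\seg(v,B)$ v-components after extraction — \emph{provided} the set of deleted vertices is neither empty nor all of $V(C_v)$. The two boundary cases are: if no half-edge at $v$ is in $A$ (i.e.\ all edges at $v$ are in $B$), nothing is deleted and $C_v$ survives as one cycle; here $\seg(v,B)=1$ if $v$ has at least one edge and, separately, if $v$ is isolated then $C_v$ is a free loop (a v-component) contributing $1$, accounted for by $\iota(\cog{G})$; if every half-edge at $v$ is in $A$ then all of $C_v$ is deleted, contributing $0$, and indeed $\seg(v,B)=0$. Checking that the definition of $\seg(v,B)$ given in the excerpt (with its stated conventions $\seg(v,\emptyset)=0$, $\seg(v,E)=1$, and the special treatment of isolated vertices via $\iota$) makes all these cases agree is exactly the point of the carefully-worded paragraph preceding the theorem, so the bookkeeping is already set up. Summing $\seg(v,B)$ over all non-isolated $v$ and adding $\iota(\cog{G})$ for the isolated ones (whose free-loop v-cycles are untouched by extraction) gives $k_{\textup{v}}(\gec{G}\ext\mathcal{A}) = \seg(B) + \iota(\cog{G})$, and substituting into the Theorem~\ref{thm:mwd} formula yields the claim.

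The main obstacle is purely the boundary-case bookkeeping: the quantity $\seg(v,B)$ is defined so that it equals ``number of maximal $B$-runs in $\sigma_v$'' in the generic case but is pinned to specific values ($0$ and $1$) when $B$ misses or contains all half-edges at $v$, and one must verify these pinned values are exactly what the v-component count produces — including disentangling the truly isolated vertices (free loops, counted by $\iota$) from vertices all of whose incident edges lie in $B$ (v-cycles that survive extraction intact). I expect the honest part of the write-up to be a clean case split at a single v-cycle, after which the global sum is immediate.
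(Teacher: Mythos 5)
Your proposal is correct and follows essentially the same route as the paper: both reduce to the state-sum of Theorem~\ref{thm:mwd}, identify the complementary set $B=E-A$ to match the $y$-exponent, and then argue locally at each v-cycle $C_v$ that the surviving vertices break $C_v$ into exactly $\seg(v,B)$ v-components, with free loops accounting for the $\iota(\cog{G})$ term. The only cosmetic difference is that you work with the $k_{\textup{v}}(\gec{G}\ext\mathcal{A})$ form of Theorem~\ref{thm:mwd} while the paper reindexes to the $k(\gec{G}\ba\mathcal{B}\ext(\mathcal{E}-\mathcal{B}))$ form; these are the same count, and your boundary-case bookkeeping matches the paper's.
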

\begin{proof}
Let $\mathcal{E}$ be the set of e-edges of $\gec{G}$. 
By replacing $\mathcal{A}$ in Theorem~\ref{thm:mwd} by $\mathcal{B} = \mathcal{E}-\mathcal{A}$ and using Proposition~\ref{prop:2dag} we can write 
\[
\calM(\gec{G};x,y) = \sum_{\mathcal{B} \subseteq \mathcal{E}} y^{|\mathcal{E}|-|\mathcal{B}|}x^{k(\gec{G}\ba \mathcal{B} \ext (\mathcal{E}-\mathcal{B}))}.
\]

Fix some $\mathcal{B} \subseteq \mathcal{E}$ and let $B\subseteq E$ be the corresponding set of edges in the cog $\cog{G}$. 
The gec
$\gec{G}\ba \mathcal{B} \ext (\mathcal{E}-\mathcal{B})$ can be obtained by taking the union of all of the v-cycles in $\gec{G}$ then deleting each vertex that was not an end of an edge in $\mathcal{B}$. 
Thus, the contribution of a cycle of v-edges $C_v$ to $k(\gec{G}\ba \mathcal{B} \ext (\mathcal{E}-\mathcal{B}))$ can be obtained by deleting all the vertices in $C_v$ that were not an end of an edge in $\mathcal{B}$. When $C_v$ is not a free loop, this is easily seen to equal $\seg(v,B)$ where $v$ is the vertex in the cog $\cog{G}$ corresponding to $C_v$. 
When $C_v$ is a free loop it contributes one to the number of connected components, and the corresponding vertex $v$ in $\cog{G}$ is isolated.
The result follows.
\end{proof}

When $\cog{G}$ is a 3-regular or 4-regular cog, Theorem~\ref{thm:rda} reduces to a particularly accessible form.
 For these, if $\cog{G}=(V,E)$ is a cog or graph and $A$ is a subset of its edges, then we say a vertex $v$ of $\cog{G}$ is \emph{saturated} by $A$ if all of its incident edges are in $A$. 
\begin{corollary}\label{cor:fgh}
Let $G$ be a 3-regular graph on $n$ vertices and let $\gec{G}$ be the gec corresponding to its unique cog. Then
\[    \calM(\gec{G};x, y) = \sum_{A\subseteq E(G)} y^{|A|} x^{n-\sat(A)} ,\] 
where $\sat(A)$ is the number of vertices in $G$  saturated by $A$. 
\end{corollary}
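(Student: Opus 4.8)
The plan is to specialize Theorem~\ref{thm:rda} to the case of a $3$-regular graph $G$. Since $G$ has maximum degree $3$, it has a unique cog $\cog{G}$, and we may apply Theorem~\ref{thm:rda} to it. As $G$ is $3$-regular it has no isolated vertices, so $\iota(\cog{G}) = 0$ and the formula becomes $\calM(\gec{G};x,y) = \sum_{B \subseteq E(G)} y^{|E|-|B|} x^{\seg(B)}$. The two cosmetic differences with the claimed formula — the exponent of $y$ being $|A|$ rather than $|E|-|B|$, and the exponent of $x$ being $n - \sat(A)$ rather than $\seg(B)$ — will be reconciled by the substitution $A = E(G) \setminus B$ together with an identity relating $\seg$ and $\sat$.

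First I would make the change of variables $A = E \setminus B$ in the sum from Theorem~\ref{thm:rda}; then $|E| - |B| = |A|$, so the power of $y$ is already in the desired form, and it remains to show $\seg(E \setminus A) = n - \sat(A)$ for every $A \subseteq E(G)$. Fix $A$ and let $B = E \setminus A$. The key step is a local (vertex-by-vertex) computation of $\seg(v, B)$ at each vertex $v$, which has degree $3$. There are exactly two relevant cases. If $v$ is saturated by $A$, then none of the three half-edges at $v$ lies in an edge of $B$, so there are no $B$-segments at $v$ and $\seg(v,B) = 0$. If $v$ is not saturated by $A$, then at least one of its three incident edges lies in $B$; reading around the unordered cyclic sequence $\sigma_v$ of the three half-edges, the half-edges belonging to $B$ form a single contiguous block (any one or two of three cyclically arranged elements form one arc), so $\seg(v,B) = 1$. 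Hence $\seg(v,B)$ equals $1$ if $v$ is unsaturated and $0$ if $v$ is saturated, and summing over all $n$ vertices gives $\seg(B) = n - \sat(A)$.

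Substituting this identity into the specialized form of Theorem~\ref{thm:rda} yields
\[
\calM(\gec{G};x,y) = \sum_{A \subseteq E(G)} y^{|A|} x^{n - \sat(A)},
\]
as required. I do not anticipate any serious obstacle here: the only point needing care is the degree-$3$ case analysis for $\seg(v,B)$, and in particular checking that a vertex with \emph{all} incident half-edges in $B$ (i.e.\ one saturated by $B$, equivalently unsaturated by $A$ unless it has degree less than... but $G$ is $3$-regular so this does not arise) contributes correctly — but since $|A| = |E| - |B|$ and we only ever have $B$ nonempty-or-empty at a vertex, the special value $\seg(v,E) = 1$ flagged in the text is never triggered with $v$ of degree $3$ unless $B$ contains all three half-edges, which is precisely the unsaturated-by-$A$ subcase already handled. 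It is worth a sentence confirming this so the reader is not tripped up by the caveats preceding Theorem~\ref{thm:rda}.
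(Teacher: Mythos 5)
Your proposal is correct and follows essentially the same route as the paper: specialise Theorem~\ref{thm:rda} using $\iota(\cog{G})=0$, reindex by $A=E\setminus B$, and observe that at a degree-$3$ vertex $\seg(v,E-A)$ is $0$ when $v$ is saturated by $A$ and $1$ otherwise, so that $\seg(E-A)=n-\sat(A)$. The only difference is expository — you spell out the cyclic-order case analysis (including the all-three-half-edges case) that the paper compresses into one line.
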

\begin{proof}
As $G$ is 3-regular it has no isolated vertices, so by making use of Theorem~\ref{thm:rda} we can write
\[ \calM(\gec{G};x, y) = \sum_{B\subseteq E}   y^{|E|-|B|} x^{\seg(B)} = \sum_{A\subseteq E}   y^{|A|} x^{\seg(E-A)}. \]
The result then follows upon noting that for each vertex $v$ of degree 3, $\seg(v, E-A)$ is $ 0=1-1$ if $v$ is saturated by $A$, and equals $1=1-0$ otherwise. 
\end{proof}

The following corollary is immediate from the previous one.
\begin{corollary}  Let $G$ be a 3-regular graph on $n$ vertices and let  $\gec{G}$ be the gec corresponding to its unique cog.
Then $\calM$ encodes a generating function for the spanning subgraphs of $G$ with a fixed number of vertices of degree 3:
\begin{equation}
    x^n\calM(\gec{G};x^{-1}, 1) = \sum_i{ c_i x^i},
\end{equation}
  where $c_i$ is the number of spanning subgraphs of $C$ that have exactly $i$ of their vertices of degree three.  
\end{corollary}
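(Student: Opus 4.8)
The plan is to obtain this corollary purely formally from Corollary~\ref{cor:fgh} by substituting $x \mapsto x^{-1}$ and $y \mapsto 1$ into the explicit sum and then multiplying through by $x^n$. Starting from
\[
\calM(\gec{G};x,y) = \sum_{A\subseteq E(G)} y^{|A|} x^{n-\sat(A)},
\]
setting $y=1$ kills the $y$-powers, setting $x = x^{-1}$ turns $x^{n-\sat(A)}$ into $x^{\sat(A)-n}$, and multiplying by $x^n$ yields $\sum_{A\subseteq E(G)} x^{\sat(A)}$. So the first step is simply to record that
\[
x^n\calM(\gec{G};x^{-1},1) = \sum_{A\subseteq E(G)} x^{\sat(A)}.
\]

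The second step is to reinterpret this sum. A subset $A\subseteq E(G)$ is the same datum as a spanning subgraph $H$ of $G$ with edge set $A$ (and all vertices of $G$), so the sum over $A\subseteq E(G)$ is exactly a sum over spanning subgraphs of $G$. Under this identification a vertex $v$ is saturated by $A$ precisely when all three of its incident edges lie in $A$, i.e.\ precisely when $v$ has degree $3$ in the spanning subgraph $H$ with $E(H)=A$ (here we use that $G$ is $3$-regular, so ``all incident edges present'' is the same as ``degree $3$ in $H$''). Hence $\sat(A)$ equals the number of degree-$3$ vertices of the corresponding spanning subgraph, and grouping the spanning subgraphs by this statistic gives $\sum_{A\subseteq E(G)} x^{\sat(A)} = \sum_i c_i x^i$ with $c_i$ the number of spanning subgraphs having exactly $i$ vertices of degree $3$, which is the claimed identity.

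There is essentially no obstacle here; the only things to be careful about are bookkeeping points. One should check that the substitution is legitimate — $\calM(\gec{G};x,y)$ is a genuine (Laurent-free) polynomial, and Corollary~\ref{cor:fgh} writes it as a finite sum of monomials $y^{|A|}x^{n-\sat(A)}$ with $n-\sat(A)\ge 0$, so $x^n\calM(\gec{G};x^{-1},1)$ is an honest polynomial in $x$ with no negative exponents. One should also note the harmless typo in the statement (``spanning subgraphs of $C$'' should read ``of $G$''), but this does not affect the argument. With these remarks the proof is a one-line computation followed by the reinterpretation above.
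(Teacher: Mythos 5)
Your proposal is correct and is exactly the argument the paper intends: the paper states this corollary as ``immediate from the previous one,'' and your computation (substitute $x\mapsto x^{-1}$, $y\mapsto 1$ in Corollary~\ref{cor:fgh}, multiply by $x^n$, and identify $\sat(A)$ with the number of degree-$3$ vertices of the spanning subgraph with edge set $A$) is precisely the omitted verification. Your side remarks about well-definedness of the substitution and the typo ``$C$'' for ``$G$'' are both accurate.
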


 We say a vertex $v$ of a 4-regular cog $\cog{G}$ is a \emph{crossing vertex} with respect to a subset of its edges $A$ if as we travel round the vertex with respect to the undirected cyclic order the edges alternate between being in $A$ and not being in $A$.

\begin{corollary}
Let $\cog{G}$ be a 4-regular cog on $n$ vertices and let $\gec{G}$ be its corresponding gec. Then 
\[    \calM(\gec{G};x, y) = \sum_{A\subseteq E(\cog{G})} y^{|A|} x^{n+\cro(A)-\sat(A)} ,\] 
where $\sat(A)$ is the number of vertices in $\cog{G}$ saturated by $A$, and  $cr(A)$ the number of crossing vertices in $\cog{G}$ with respect to $A$.
\end{corollary}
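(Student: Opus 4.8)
The plan is to derive this corollary directly from Theorem~\ref{thm:rda} in exactly the same way Corollary~\ref{cor:fgh} was derived, the only difference being the degree-$4$ bookkeeping for $B$-segments at a vertex. First I would note that a $4$-regular cog has no isolated vertices, so $\iota(\cog{G})=0$ and Theorem~\ref{thm:rda} gives $\calM(\gec{G};x,y)=\sum_{B\subseteq E}y^{|E|-|B|}x^{\seg(B)}$. Re-indexing via $A=E-B$ turns this into $\sum_{A\subseteq E}y^{|A|}x^{\seg(E-A)}$, so it suffices to show that for each vertex $v$ of degree $4$ we have $\seg(v,E-A)=1+[\,v\text{ is a crossing vertex w.r.t. }A\,]-[\,v\text{ is saturated by }A\,]$, and then sum over $v$ to get $\seg(E-A)=n+\cro(A)-\sat(A)$.

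The heart of the argument is this local case analysis at a single degree-$4$ vertex $v$. Write the undirected cyclic sequence $\sigma_v$ of the four half-edges at $v$, and let $B=E-A$; a $B$-segment at $v$ is a maximal run of half-edges lying in edges of $B$, equivalently in the complement of $A$. I would organize by how many of the four half-edges at $v$ belong to $A$: if zero belong to $A$ then $v$ is saturated by $B=E-A$ is false but all half-edges are in $B$, giving $\seg(v,E-A)=1$, while $v$ is not saturated by $A$ and not a crossing vertex, so the formula reads $1=1+0-0$. If all four belong to $A$ then $v$ is saturated by $A$, there are no $B$-half-edges so $\seg(v,E-A)=0$, and $v$ is not a crossing vertex, so the formula reads $0=1+0-1$. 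In the remaining cases ($1$, $2$, or $3$ half-edges in $A$) both $A$ and $B=E-A$ have at least one half-edge at $v$, so $\seg(v,E-A)$ equals the number of maximal runs of $B$-half-edges around $\sigma_v$, which for a cyclic sequence of length $4$ with a proper nonempty/non-full split is $1$ unless the two types strictly alternate around the cycle, in which case it is $2$. The alternating pattern is possible only with a $2$--$2$ split and is exactly the definition of $v$ being a crossing vertex with respect to $A$; in every other case with $1$, $2$, or $3$ half-edges in $A$ the $B$-half-edges form a single run. Since $v$ is not saturated by $A$ in any of these cases, the formula $\seg(v,E-A)=1+\cro_v-\sat_v$ holds throughout.

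Finally I would assemble the pieces: summing the local identity over all $n$ vertices gives $\seg(E-A)=n+\cro(A)-\sat(A)$, and substituting into $\calM(\gec{G};x,y)=\sum_{A\subseteq E}y^{|A|}x^{\seg(E-A)}$ yields the claimed formula. I do not anticipate a real obstacle here; the only point needing care is the degenerate cases where one of $A$ or $E-A$ contains all or none of the half-edges at $v$, since there the slogan ``$\seg(v,B)$ counts the alternations'' fails (as the excerpt explicitly warns), and one must fall back on the definition of a $B$-segment directly. The ``all four in $A$'' case is precisely where the $-\sat(A)$ correction term enters, mirroring the $-\sat(A)$ in the $3$-regular corollary, and the $+\cro(A)$ term is the new feature arising because degree $4$ (unlike degree $3$) admits a nontrivial alternating pattern.
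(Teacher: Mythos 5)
Your proposal is correct and follows essentially the same route as the paper: reduce to Theorem~\ref{thm:rda} as in the $3$-regular case, reindex by $A=E-B$, and verify the local identity $\seg(v,E-A)=1+[v\text{ crossing}]-[v\text{ saturated}]$ at each degree-$4$ vertex. Your write-up is in fact more careful than the paper's one-line case analysis, particularly in handling the degenerate all-or-nothing cases where the ``count the alternations'' heuristic fails.
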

\begin{proof}
Follow the proof of Corollary~\ref{cor:fgh} but use the fact that for each vertex $v$ of degree 4, $\seg(v, E-A) = 1-0-1$ if $v$ is saturated by $A$,  equals $1+1-0$ if $v$ is crossing with respect to $A$, and equals $1+0-0$ otherwise.
\end{proof}

Theorem~\ref{thm:rda} immediately offers a way to define $\calM(\gec{G};x, y)$ directly on a cog $\cog{G}$ rather than through its corresponding gec by defining \[M(\cog{G};x, y) = \sum_{B\subseteq E}   y^{|E|-|B|} x^{\seg(B)+\iota(\cog{G})}.\] However, to obtain a polynomial that satisfies a linear recursion relation that is the cog-version of Equation~\eqref{ed:cde} we define a slightly more general version of this polynomial.   

\begin{definition}
Let $\cog{G}=(V,E)$ be a cog and let $D$ and $X$ be disjoint subsets of $E$. We define
\[M(\cog{G},D,X;x, y) = \sum_{B\subseteq E-(D\cup X)}   y^{|E|-|B\cup D\cup X|} x^{\seg(B\cup D)+\iota(\cog{G})},\]
where $\seg(B\cup D)$ is the number of $(B\cup D)$-segments in the cog $\cog{G}$, and $\iota(\cog{G})$ its number of isolated vertices.
\end{definition}

By  Theorem~\ref{thm:rda}, $M(\cog{G},\emptyset,\emptyset;x, y) = \calM(\gec{G} ; x,y)$
 for  a cog $\cog{G}$ and its corresponding gec $\gec{G}$. In fact a stronger result holds.

\begin{theorem}\label{thm:scg}
Let $\cog{G}=(V,E)$ be a cog and $\gec{G}$ be its corresponding gec. In addition let $D$ and $X$ be disjoint subsets of $E$, and let $\gec{D}$ and $\gec{X}$ be the corresponding sets of $e$-edges of $\gec{G}$.  Then
\[ M(\cog{G},D,X;x, y) =  \calM(\gec{G}\ba \gec{D}\ext \gec{X}  ; x,y).\]
\end{theorem}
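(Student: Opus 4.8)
The plan is to expand the right-hand side $\calM(\gec{G}\ba \gec{D}\ext \gec{X};x,y)$ by means of the closed formula in Theorem~\ref{thm:mwd}, and then re-index and simplify the resulting sum until it coincides term-by-term with the definition of $M(\cog{G},D,X;x, y)$. First note that $\gec{H}:=\gec{G}\ba \gec{D}\ext \gec{X}$ is a generalised gec, since deleting and extracting e-edges of a gec always produces one, so Theorem~\ref{thm:mwd} is available. Writing $\mathcal{E}$ for the set of e-edges of $\gec{G}$, the set of e-edges of $\gec{H}$ is $\mathcal{E}':=\mathcal{E}-(\gec{D}\cup \gec{X})$, and Theorem~\ref{thm:mwd} gives
\[
\calM(\gec{H};x,y)=\sum_{\mathcal{A}\subseteq \mathcal{E}'} y^{|\mathcal{A}|}\, x^{k_{\textup{v}}(\gec{H}\ext \mathcal{A})}.
\]

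Next I would re-index by complementation \emph{within} $\mathcal{E}'$: set $\mathcal{B}:=\mathcal{E}'-\mathcal{A}$ and let $B\subseteq E-(D\cup X)$ be the set of cog-edges of $\cog{G}$ corresponding to $\mathcal{B}$ under the natural correspondence. As $\mathcal{A}$ ranges over subsets of $\mathcal{E}'$ so does $\mathcal{B}$, hence so does $B$ over subsets of $E-(D\cup X)$. Since $|\mathcal{E}'|=|E|-|D|-|X|$ and $|\mathcal{B}|=|B|$, and $B$, $D$, $X$ are pairwise disjoint, the exponent of $y$ becomes $|\mathcal{A}|=|\mathcal{E}'|-|\mathcal{B}|=|E|-|B\cup D\cup X|$, which already matches the $y$-exponent appearing in $M(\cog{G},D,X;x, y)$.

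It then remains to identify the exponent of $x$. Applying Proposition~\ref{prop:2dag} repeatedly to commute the e-edge operations, $\gec{H}\ext \mathcal{A}=\gec{G}\ba \gec{D}\ext(\gec{X}\cup \mathcal{A})$; and since $\gec{X}$ is disjoint from $\gec{D}$ and from $\mathcal{A}$, one has $\gec{X}\cup \mathcal{A}=\mathcal{E}-\gec{D}-\mathcal{B}$. Writing $\mathcal{C}:=\gec{D}\cup \mathcal{B}$, with corresponding cog-edge set $C:=D\cup B$, this says $\gec{H}\ext \mathcal{A}=\gec{G}\ba \gec{D}\ext(\mathcal{E}-\mathcal{C})$. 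The key observation now is that $k_{\textup{v}}$ is unchanged by deletion of e-edges --- deleting an e-edge alters neither the vertex set nor the set of v-edges, hence not the spanning subgraph on the v-edges --- so deleting the e-edges of $\mathcal{B}$ that are still present in $\gec{G}\ba \gec{D}\ext(\mathcal{E}-\mathcal{C})$ does not change $k_{\textup{v}}$:
\[
k_{\textup{v}}\bigl(\gec{G}\ba \gec{D}\ext(\mathcal{E}-\mathcal{C})\bigr)=k_{\textup{v}}\bigl(\gec{G}\ba \mathcal{C}\ext(\mathcal{E}-\mathcal{C})\bigr).
\]
In $\gec{G}\ba \mathcal{C}\ext(\mathcal{E}-\mathcal{C})$ every e-edge has been deleted or extracted, so none remain and $k_{\textup{v}}$ coincides with the total component count $k$. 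But $k\bigl(\gec{G}\ba \mathcal{C}\ext(\mathcal{E}-\mathcal{C})\bigr)=\seg(C)+\iota(\cog{G})$ is precisely the count carried out v-cycle by v-cycle in the proof of Theorem~\ref{thm:rda} (with the free-loop v-cycles contributing the isolated vertices of $\cog{G}$). Hence the exponent of $x$ is $\seg(B\cup D)+\iota(\cog{G})$, and feeding this back in turns the sum into $\sum_{B\subseteq E-(D\cup X)} y^{|E|-|B\cup D\cup X|} x^{\seg(B\cup D)+\iota(\cog{G})}$, which is exactly $M(\cog{G},D,X;x, y)$.

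I expect the main obstacle to be purely clerical: keeping the nested set-differences of e-edges straight, and checking that every use of Proposition~\ref{prop:2dag} is legitimate (it requires the manipulated sets to consist of e-edges, which here they all do). The single conceptual step is the remark that $k_{\textup{v}}$ ignores e-edges, so the e-edges of $\mathcal{B}$ surviving in $\gec{H}\ext\mathcal{A}$ may be deleted for free; this is what lets us land on the object $\gec{G}\ba \mathcal{C}\ext(\mathcal{E}-\mathcal{C})$ whose component count was already computed in the course of proving Theorem~\ref{thm:rda}, rather than having to redo that v-cycle analysis.
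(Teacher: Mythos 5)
Your proposal is correct and follows essentially the same route as the paper: both expand $\calM(\gec{G}\ba\gec{D}\ext\gec{X})$ via Theorem~\ref{thm:mwd}, re-index by complementation within the surviving e-edges, use Proposition~\ref{prop:2dag} to rewrite each term's underlying graph as $\gec{G}\ba(\gec{D}\cup\mathcal{B})\ext(\mathcal{E}-(\gec{D}\cup\mathcal{B}))$, and identify its component count with $\seg(B\cup D)+\iota(\cog{G})$ by the v-cycle analysis. The only (harmless) difference is that you phrase the last step through the observation that $k_{\textup{v}}$ is insensitive to deleting e-edges and then cite the computation from the proof of Theorem~\ref{thm:rda}, where the paper simply repeats that v-cycle argument verbatim.
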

\begin{proof}
The proof follows that of Theorem~\ref{thm:rda}.
Let $G$, $\gec{G}$,  $D$, $X$, $\gec{D}$ and $\gec{X}$ be as in the theorem statement, and let $\mathcal{E}$ be the set of e-edges in $\gec{G}$, so $\mathcal{E}-(\gec{D}\cup \gec{X})$ is the set of e-edges in $\gec{G}\ba \gec{D}\ext \gec{X}$. 
Reindexing the sum in Theorem~\ref{thm:mwd} and using Proposition~\ref{prop:2dag} gives  
\begin{align*}
\calM(\gec{G}\ba \gec{D}\ext \gec{X};x,y) 
&= \sum_{\mathcal{B} \subseteq \mathcal{E}-(\gec{D}\cup \gec{X})} y^{|\mathcal{E}-(\gec{D}\cup \gec{X})|-|\mathcal{B}|}x^{k((\gec{G}\ba \gec{D}\ext \gec{X})\ba \mathcal{B} \ext (\mathcal{E}-(\gec{D}\cup \gec{X})-\mathcal{B}))}
\\
&=
\sum_{\mathcal{B} \subseteq \mathcal{E}-(\gec{D}\cup \gec{X})} 
y^{|\mathcal{E}-(\mathcal{B}\cup \gec{D}\cup \gec{X})|}
x^{k(\gec{G}\ba (\gec{D}\cup \gec{B}) \ext (\gec{E}-(\gec{D}\cup \gec{B})))}
.
\end{align*}

Fix some $\mathcal{B} \subseteq \mathcal{E}-(\gec{D}\cup \gec{X})$ and let $B\subseteq E$ be the corresponding set of edges in the cog $\cog{G}$. 
The gec
$\gec{G}\ba (\gec{D}\cup\mathcal{B}) \ext (\mathcal{E}-(\gec{D}\cup\mathcal{B}))$ can be obtained by taking the union of all of the v-cycles in $\gec{G}$ then deleting each vertex that was not an end of an edge in $\gec{D}\cup\mathcal{B}$. 
Thus, the contribution of a cycle of v-edges $C_v$ to $k(\gec{G}\ba (\gec{D}\cup\mathcal{B}) \ext (\mathcal{E}-(\gec{D}\cup\mathcal{B})))$ can be obtained by deleting all the vertices in $C_v$ that were not an end of an edge in $\gec{D}\cup\mathcal{B}$. When $C_v$ is not a free loop, this contribution is easily seen to equal $\seg(v,D\cup B)$ where $v$ is the vertex in the cog $\cog{G}$ corresponding to $C_v$. 
When $C_v$ is a free loop it contributes one to the number of connected components, and the free loop corresponds to a unique isolated vertex in
$\cog{G}$.
Thus, 
$k(\gec{G}\ba (\gec{D}\cup\mathcal{B}) \ext (\mathcal{E}-(\gec{D}\cup\mathcal{B})))$ 
is equal to 
$\seg(B\cup D)+\iota(\cog{G})$. 
Additionally,  $|\mathcal{E}-(\mathcal{B}\cup \gec{D}\cup \gec{X})| = |E|-|B\cup D\cup X|$.
Thus, for corresponding sets $\gec{B}$ and $B$ we have  
$y^{|\mathcal{E}-(\mathcal{B}\cup \gec{D}\cup \gec{X})|}
x^{k(\gec{G}\ba (\gec{D}\cup \gec{B}) \ext (\gec{E}-(\gec{D}\cup \gec{B})))}
=
 y^{|E|-|B\cup D\cup X|} x^{\seg(B\cup D)+\iota(\cog{G})}$ and
the result follows.
\end{proof}

The following result gives a recursive definition of $\calM(\gec{G}\ba \gec{D}\ext \gec{X};x,y) $. It follows immediately from Equation~\eqref{ed:cde} and Theorem~\ref{thm:scg}
\begin{corollary}
Let $\cog{G}=(V,E)$ be a cog and $\gec{G}$ be its corresponding gec. In addition let $D$ and $X$ be disjoint subsets of $E$. Then 
\[  M(\cog{G},D,X)  =
\begin{cases}
M(\cog{G},D\cup \{e\},X) +y M(\cog{G},D,X\cup \{e\})  &
\text{for any $e\in E-(D\cup X)$,}\\
x^{\seg(D)+\iota(\cog{G})} & \text{if $D\cup X = E$,}
\end{cases}
\]
where we have written  $M(\cog{G},S,T)$ for $M(\cog{G},S,T;x, y)$. 
\end{corollary}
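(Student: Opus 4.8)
The plan is to obtain both clauses of the recursion directly from Theorem~\ref{thm:scg}, which already identifies $M(\cog{G},D,X;x,y)$ with the saturation polynomial $\calM(\gec{G}\ba \gec{D}\ext \gec{X};x,y)$ of an explicit generalised gec. Given that identification, the recursion for $M$ should be nothing more than a transcription, through the natural correspondence between the edges of $\cog G$ and the e-edges of $\gec G$, of the defining recursion~\eqref{ed:cde} for $\calM$.

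For the recursive clause, I would fix $e\in E-(D\cup X)$ and let $\gec e$ be the corresponding e-edge of $\gec G$. Since $e\notin D\cup X$, the e-edge $\gec e$ lies outside $\gec D\cup\gec X$ and so is still an e-edge of the generalised gec $\gec{G}\ba \gec{D}\ext \gec{X}$. Applying~\eqref{ed:cde} to $\gec e$ in this generalised gec expresses $\calM(\gec{G}\ba \gec{D}\ext \gec{X};x,y)$ as $\calM\bigl((\gec{G}\ba \gec{D}\ext \gec{X})\ba \gec e;x,y\bigr)+y\,\calM\bigl((\gec{G}\ba \gec{D}\ext \gec{X})\ext \gec e;x,y\bigr)$. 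Proposition~\ref{prop:2dag} then lets me commute $\ba\gec e$ and $\ext\gec e$ past the operations on $\gec D$ and $\gec X$, so the first term equals $\calM(\gec{G}\ba(\gec D\cup\{\gec e\})\ext \gec X;x,y)$ and the second equals $\calM(\gec{G}\ba \gec D\ext(\gec X\cup\{\gec e\});x,y)$; here $\gec D\cup\{\gec e\}$ and $\gec X$ remain disjoint and correspond to $D\cup\{e\}$ and $X$ in the cog, and likewise for the second term. A second application of Theorem~\ref{thm:scg} rewrites these as $M(\cog{G},D\cup\{e\},X;x,y)$ and $M(\cog{G},D,X\cup\{e\};x,y)$, which is the first clause.

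For the base clause, when $D\cup X=E$ the corresponding e-edge sets satisfy $\gec D\cup\gec X=\mathcal E$, so $\gec{G}\ba \gec{D}\ext \gec{X}$ has no e-edges and the terminal line of~\eqref{ed:cde} gives $\calM(\gec{G}\ba \gec{D}\ext \gec{X};x,y)=x^{k(\gec{G}\ba \gec{D}\ext \gec{X})}$. It then remains to see that $k(\gec{G}\ba \gec{D}\ext \gec{X})=\seg(D)+\iota(\cog G)$, which is exactly the $\mathcal B=\emptyset$ specialisation of the component count carried out in the proof of Theorem~\ref{thm:scg} (each non-free-loop v-cycle contributes $\seg(v,D)$ and each free loop contributes $1$ via an isolated vertex of $\cog G$); alternatively one can read it straight off the closed form of Theorem~\ref{thm:scg}, whose sum over $B\subseteq E-(D\cup X)$ collapses to the single term $B=\emptyset$ when $D\cup X=E$, leaving $y^{0}x^{\seg(D)+\iota(\cog G)}$.

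I do not expect a genuine obstacle here: the argument is a short unfolding. The only points that need a moment's care are checking that $\gec e$ really survives in $\gec{G}\ba \gec{D}\ext \gec{X}$ and that Proposition~\ref{prop:2dag} is applicable (both consequences of $e\notin D\cup X$), and, in the base clause, observing that $D\cup X=E$ forces all e-edges to have been removed, so that it is indeed the terminal clause of~\eqref{ed:cde} that fires.
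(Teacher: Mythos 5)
Your proposal is correct and is essentially the paper's argument: the paper states only that the corollary ``follows immediately from Equation~\eqref{ed:cde} and Theorem~\ref{thm:scg},'' and your write-up is exactly the intended unfolding (apply the deletion--extraction recursion for $\calM$ to the surviving e-edge, commute operations via Proposition~\ref{prop:2dag}, translate back with Theorem~\ref{thm:scg}, and collapse the sum to $B=\emptyset$ for the base clause). The only quibble is that the displayed sum over $B\subseteq E-(D\cup X)$ is the definition of $M(\cog{G},D,X)$ rather than the statement of Theorem~\ref{thm:scg}, but this does not affect the argument.
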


\section{A cog polynomial via the transition polynomial} \label{sec:trans2}

In the previous section we obtained a cog invariant by regarding a cog as an abstract graph with additional structure (specifically, a gec) and then adapting a known graph polynomial so that its value depends upon this additional structure. An alternative approach is to regard cogs as embedded graphs in which some information is lost, then to obtain a cog invariant by adapting an embedded graph invariant so it no longer depends on the absent information. This section  considers this process of `losing information' starting with the topological transition polynomial to create another new cog invariant.

We first  review some additional operations on ribbon graphs.
Let $\bG$ be a ribbon graph and $e\in E(\bG)$. Then $\bG\ba e$ denotes the ribbon graph obtained from $\bG$ by \emph{deleting} the edge $e$.
If $u$ and $v$ are the (not necessarily distinct) vertices incident with $e$, then $\bG/ e$ denotes the ribbon graph obtained as follows: consider the boundary component(s) of $e\cup\{u,v\}$ as curves on $\bG$. For each resulting curve, attach a disc (which will form a vertex of $\bG/e$) by identifying its boundary component with the curve. Delete $e$, $u$ and $v$ from the resulting complex to get the ribbon graph $\bG/e$. We say $\bG/e$ is obtained from $\bG$ by {\em contracting} $e$. See Table~\ref{tablecontractrg} for the local effect of contracting an edge of a ribbon graph.
For a ribbon graph $\bG$ and edge $e$, we let  $\bG\tcon e$ denote the ribbon graph $\bG^{\tau(e)}/e$.  We call the operation this defines a \emph{Penrose-contraction}. Again, see Table~\ref{tablecontractrg}.

\begin{table}[t]
\centering
\begin{tabular}{|c||c|c|c|}\hline
& non-loop & non-orientable loop & orientable loop \\ \hline
\raisebox{6mm}{$\bG$} 
&\includegraphics[scale=.25]{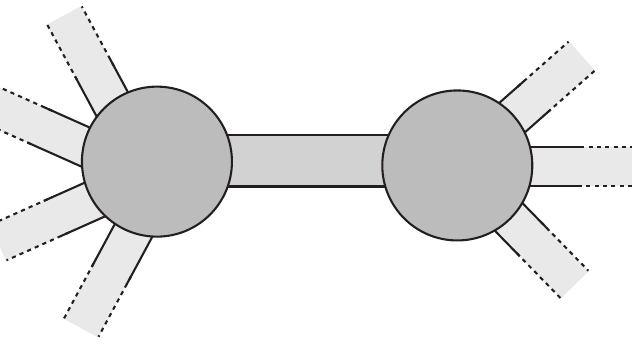} &\includegraphics[scale=.25]{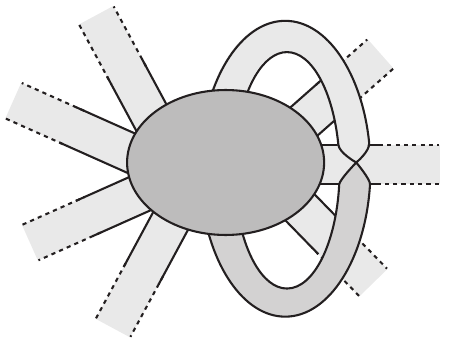} &\includegraphics[scale=.25]{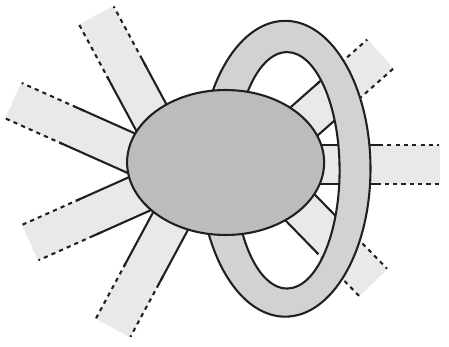}
\\ \hline
\raisebox{6mm}{$\bG/e$} 
&\includegraphics[scale=.25]{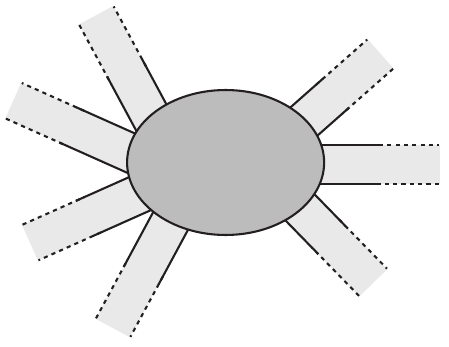} &\includegraphics[scale=.25]{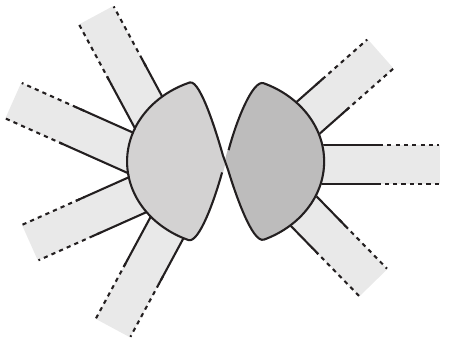}&\includegraphics[scale=.25]{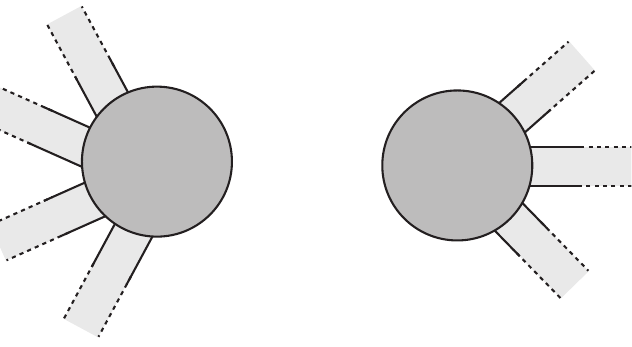}

\\ \hline
\raisebox{6mm}{$\bG\tcon e$} 
 & \includegraphics[scale=.25]{ch4_37a}&\includegraphics[scale=.25]{ch4_38a}&\includegraphics[scale=.25]{ch4_37a}
\\ \hline
\end{tabular}
\caption{Contracting and Penrose-contracting an edge of a ribbon graph.}
\label{tablecontractrg}
\end{table}

\medskip

 The \emph{topological transition polynomial}, introduced in \cite{MR2869185}, is a multivariate polynomial of ribbon graphs (or equivalently, of graphs embedded in surfaces). It contains both the 2-variable version of  Bollob\'as and Riordan's ribbon graph polynomial~\cite{MR1851080,MR1906909} and the Penrose polynomial~\cite{MR1428870,MR2994409} as specialisations, and is closely related to  Jaeger's transition polynomial~\cite{MR1096990} and the 
 generalised transition polynomial of 
 \cite{MR1980048}.

\begin{definition}\label{toptranspoly}
If $\bG=(V,E)$ is a ribbon graph, then the
 \emph{topological transition polynomial}, $Q(\bG; (\alpha,\beta,\gamma), t)$, can be defined as 
\label{Part3} \[Q(\bG; (\alpha,\beta,\gamma), t) =\sum_{(X,Y,Z) \in \mathcal{P}_3(E)}  
 \alpha^{|X|}\beta^{|Y|}\gamma^{|Z|}
 t^{b( \bG^{\tau(Z)}\ba Y)},\] 
where $\mathcal{P}_3(E)$ denotes the set of ordered partitions of $E$ into three blocks (the blocks may be empty), and where $b(\bG^{\tau(Z)}\ba Y)$ denotes the number of boundary components of $\bG^{\tau(Z)}\ba Y$.
\end{definition}
As shown in~\cite[Theorem~5.5]{MR2869185}, the topological transition polynomial can be defined through a recursion relation
\begin{equation}\label{eq:qdc}
Q(\bG) = 
\begin{cases}
\alpha Q(\bG/ e) +\beta Q(\bG\ba e) +\gamma Q(\bG\tcon e) & \text{for an edge $e$,}\\
t^{k(\bG)} & \text{if $\bG$ is edgeless,}
\end{cases} 
\end{equation}
where we have written $Q(\bG)$ for $Q(\bG; (\alpha,\beta,\gamma), t)$,  and where $k(\bG)$ denotes the number of connected components of $\bG$. 
Additional background on the topological transition polynomial can be found in, for example,~\cite{MR3086663}.

It is immediately seen from Equation~\eqref{eq:qdc} that 
\[Q(\bG; (\alpha,\beta,\alpha), t) = Q(\bG^{\tau(e)}; (\alpha,\beta,\alpha), t).\] 
Thus, the value of $Q(\bG; (\alpha,\beta,\alpha), t)$ is constant on the set of all partial Petrie duals of $\bG$, which by Proposition~\ref{prop:ppcog2} is the set of ribbon graphs with the same underlying cog $\cog{G}$ as $\bG$.  This means that $Q(\bG; (\alpha,\beta,\alpha), t)$  is an invariant of the cog $\cog{G}$.
The resulting cog invariant is naturally expressed in terms of gecs as we now describe.

\medskip

We work in the $\mathbb{Z}$-module of formal linear combinations of graphs. 
Let  $G=(V,E)$ be a graph, possibly with free loops, and suppose $e=(u,v)$ is a non-loop edge such that $u$ and $v$ have degree three. 
We define $G\vsplice e$ and $G\hsplice e$, illustrated in Table~\ref{tab:splice}, as follows. 
\begin{itemize}
\item Suppose $e$ is adjacent to four distinct edges $(u_1,u)$, $(u_2,u)$, $(v_1,v)$, and $(v_2,v)$ (it is possible $u_1=u_2$ or $v_1=v_2$ here). Then  $G\vsplice e$ is the graph obtained by adding edges $(u_1,u_2)$, $(v_1,v_2)$ to $G\ba \{u,v\}$. 
Furthermore, $G\hsplice e$ is the sum of the graph obtained by adding edges  $(u_1,v_1)$ and $(u_2,v_2)$ to $G\ba \{u,v\}$ with the graph obtained by adding edges  $(u_1,v_2)$ and $(u_2,v_1)$ to $G\ba \{u,v\}$.

\item Suppose $e$ is incident to exactly one loop, and without loss of generality assume the loop meets $u$. Let  $G'$ be the graph obtained by deleting $u$ and then contracting either one of the two edges incident to $v$.
Then  $G\vsplice e$ is the union of $G'$ and a free loop, and  $G\hsplice e = G'+G'$.

\item Suppose $e$ is incident to two loops. Then $G\vsplice e$ is the union of $G\ba \{u,v\}$ and two free loops, while $G\hsplice e$ is the sum of two copies of the union of $G\ba \{u,v\}$ and one free loop. 

\item Suppose there is exactly one edge $f$  parallel to $e$. Let $G'$ be the graph obtained by deleting $e$ and contracting each of the remaining two edges incident to $f$. Then   $G\vsplice e$ is defined to be $G'$, while   $G\hsplice e$ is the sum of $G'$ with the union of $G'$ and a free loop.

\item Suppose $e$ and its incident edges form three parallel edges. Let $G'$ equal $G\ba \{u,v\}$. Then  $G\vsplice e$ is defined to be the union of $G'$ and one free loop, while $G\hsplice e$ is the sum of the union of $G'$ and two free loops with the union of $G'$ and one free loop

\end{itemize}

In the case where $G=(V,E)$ has a set $\E$ of e-edges and $E-\E$ of v-edges (for example if $G$ is a gec), then we set $\E-\{e\}$ to be the e-edges of $G\vsplice e$ and $G\hsplice e$. The remaining edges are v-edges.

We highlight that the reason we define $G\hsplice e$ to be a sum is that, as there is no cyclic ordering at the vertices of $e$, we cannot distinguish between the two summands, so cannot construct either summand in isolation of the other.

\begin{table}
\centering
\begin{tabular}{|c|c|c|}\hline
$\gec{G}$ & $\gec{G}\vsplice e$ & $\gec{G}\hsplice e$ \\ \hline
\labellist
\large\hair 2pt
\pinlabel $e$ at 110 77
\endlabellist
\includegraphics[scale=0.3]{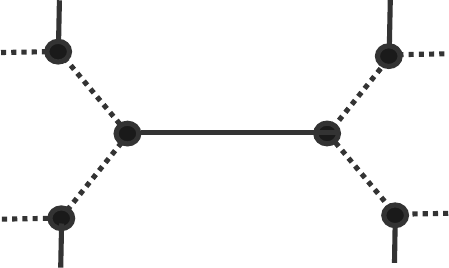}
&\includegraphics[scale=.3]{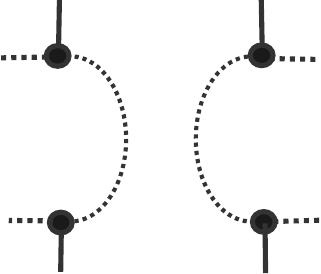} 
&\includegraphics[scale=.30]{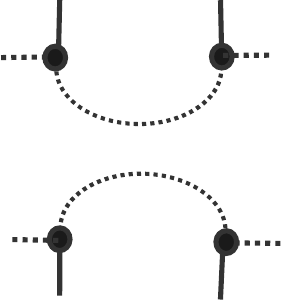} \raisebox{6mm}{$+$} \includegraphics[scale=.30]{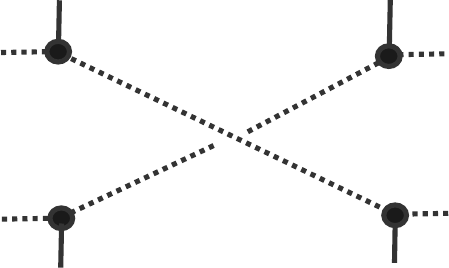}
\\ \hline
\labellist
\large\hair 2pt
\pinlabel $e$ at 120 75
\endlabellist
\includegraphics[scale=.30]{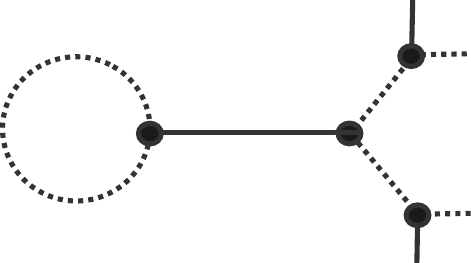}
&\includegraphics[scale=.30]{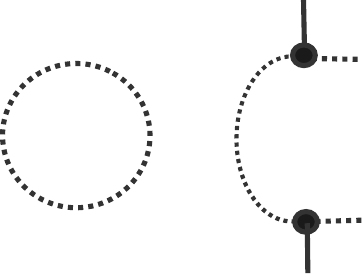} 
&\includegraphics[scale=.30]{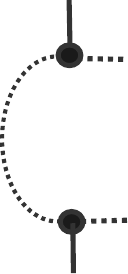} \raisebox{6mm}{$+$} \includegraphics[scale=.30]{SplitSplice7}
\\ \hline
\labellist
\large\hair 2pt
\pinlabel $e$ at 120 50
\endlabellist
\includegraphics[scale=.30]{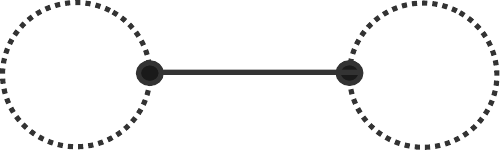}
&\includegraphics[scale=.30]{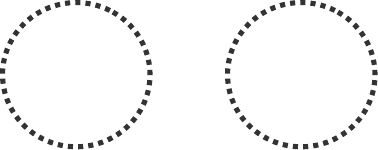} 
&\includegraphics[scale=.30]{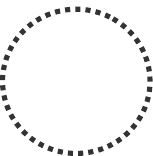} \raisebox{3mm}{$+$} \includegraphics[scale=.30]{SplitSplice10}
\\ \hline
\labellist
\large\hair 2pt
\pinlabel $e$ at 60 42
\endlabellist
\includegraphics[scale=.30]{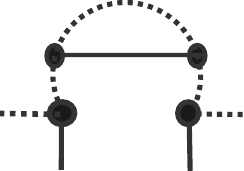}
&\includegraphics[scale=.30]{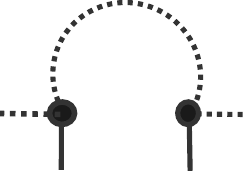} 
& \hspace{-5pt}\includegraphics[scale=.30]
{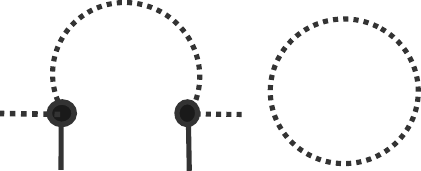} \hspace{2pt}\raisebox{3mm}{$+$} \hspace{1pt}\includegraphics[scale=.30]{SplitSplice12}
\\ \hline
\labellist
\large\hair 2pt
\pinlabel $e$ at 40 50
\endlabellist
\includegraphics[scale=.30]{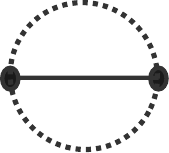}
&\includegraphics[scale=.30]{SplitSplice10} 
& \hspace{-10pt}\includegraphics[scale=.30]{splitsplice9}
\raisebox{3mm}{$+$} \includegraphics[scale=.30]{SplitSplice10}\\
\hline
\end{tabular}

\caption{An illustration of $\gec{G}\protect\vsplice e$ and $\gec{G}\protect\hsplice e$ when $\gec{G}$ is a gec or pointed-gec.}
\label{tab:splice}
\end{table}

\medskip

Suppose $\gec{G}$ is a gec representing a cog $\cog{G}$, and that $\bG$ is a ribbon graph with underlying cog $\cog{G}$.  Then it is clear that 
   $\gec{G} \vsplice e$ corresponds to the class of ribbon graphs represented by $\bG \ba e$, and $\gec{G} \hsplice e$ corresponds to $\bG / e +\bG \tcon  e$,  with $e$ denoting corresponding edges in each case.

Thus, it follows immediately from Equation~\eqref{eq:qdc} that we can define a gec-polynomial $\Q(\gec{G}; (\alpha,\beta),t)\in \mathbb{Z}[\alpha,\beta,t]$ for a gec $\gec{G}$ with set of e-edges $\E$, by setting
\begin{equation}\label{eq:qgeq1}
\Q(\gec{G};(\alpha,\beta),t) = 
\begin{cases}
\alpha\, \Q(\gec{G} \hsplice e;(\alpha,\beta),t) +\beta \, \Q(\gec{G} \vsplice e;(\alpha,\beta),t) & \text{for $e\in \E$}\\
t^{k(\gec{G})} & \text{if $\E=\emptyset$},
\end{cases} 
\end{equation}
and where $\Q(\gec{G};(\alpha,\beta),t)$ is extended linearly over sums of gecs.

Work of Jaeger~\cite{MR1096990} and Aigner~\cite{MR1428870} provides a combinatorial interpretation of the topological transition polynomial as a count of ``$k$-valuations'' which are types of edge colourings of medial graphs. From this, and as we shall see below, we can deduce a combinatorial interpretation of $\Q(\gec{G};(\alpha,\beta),t)$. However, from this combinatorial perspective it makes sense to extend the domain of the topological transition polynomial, and hence work with a generalisation of $\Q(\gec{G};(\alpha,\beta),t)$. The reason for this is that the topological transition polynomial for ribbon graphs does not quite give the generating function of \mbox{$k$-valuations} (with respect to vertex type). However, in~\cite[Theorem~4.1]{zbMATH07021383} it was shown that the generating function for \mbox{$k$-valuations} is given by a version of the transition polynomial for ``edge-point ribbon graphs''. We shall now describe this polynomial and its corresponding cog invariant.  This will let us interpret evaluations of the cog invariant in terms of $k$-valuations. 

Following~\cite{zbMATH07021383}, an \emph{edge-point ribbon graph} is a pair $(\bG,P)$ where $\bG$ is a ribbon graph and $P$ a subset of its edges. (Note two models for edge-point ribbon graph are given in~\cite{zbMATH07021383}; we are using their edge 2-coloured model here.) 
The transition polynomial for an edge-point ribbon graph, $\Omega(\bG,P;w,x,y,t)$,  is defined by 
\begin{equation}\label{eq:qdcep}
\Omega(\bG,P) = 
\begin{cases}
w \,\Omega(\bG,P\cup\{e\}) +x \, \Omega(\bG/e,P-\{e\}) &\\ \qquad +y\, \Omega(\bG\ba e,P-\{e\}) +z \,\Omega(\bG\tcon e,P-\{e\})& \text{for an edge $e\notin P$}\\
t^{k(\bG)} & \text{if $E(\bG)=P$,}
\end{cases} 
\end{equation}
where we have written $\Omega(\bG,P)$ for $\Omega(\bG,P;w,x,y,t)$, etc., and $E(\bG)$ is the edge set of $\bG$. (This is \cite[Definition~2]{zbMATH07021383} expressed in the  present notation.) Note that $\Omega(\bG,\emptyset; 0,\alpha,\beta,\gamma,t) = Q(\bG;(\alpha,\beta,\gamma),t)$.

\medskip

We shall see that edge-point ribbon graphs relate to pointed-gecs. A \emph{pointed-gec} is an edge-labelled graph arising from a gec by contracting some e-edges. See Figure~\ref{fig:pointedgec} for an example. 
Thus, a pointed gec is a graph with vertices of degrees 3 and 4 (only) and a matching that covers exactly the degree 3 vertices. The matching gives the \emph{e-edges}.

\begin{figure}
\centering
  \begin{subfigure}[b]{0.45\textwidth}
\centering
\centering
\includegraphics[scale=1.1]{gec-production_edit}
\caption{Gec.}
\label{f.pg2}
\end{subfigure}
\hfill
\begin{subfigure}[b]{0.45\textwidth}
\includegraphics[scale=1.1]{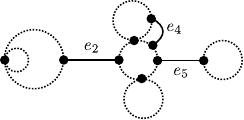}
\caption{Pointed-gec.}
\label{f.pg1}
\end{subfigure}
\caption{A gec and a pointed-gec. The pointed-gec arises by contracting the e-edges $e_1, e_3, e_6$ of the gec.}
\label{fig:pointedgec}
\end{figure}

Switching to pointed-gecs allows us to extend Equation~\eqref{eq:qgeq1} as in the following definition. We shall see that the $\Q(\gec{G} / e)$ term in the definition relates to the $\Omega(\bG,P\cup\{e\})$ term of Equation~\eqref{eq:qdcep}.
\begin{definition}\label{def:qfull}
Let $\gec{G}$ be a pointed-gec with $\E$ its set of e-edges. Then we define $\Q(\gec{G};(\alpha,\beta, \gamma),t)$ recursively through
\begin{equation}\label{eq:qgeq2}
\Q(\gec{G}) = 
\begin{cases}
\alpha\, \Q(\gec{G} \hsplice e) +\beta \, \Q(\gec{G} \vsplice e) +\gamma \, \Q(\gec{G} / e) & \text{for $e\in \E$}\\
t^{k(\gec{G})} & \text{if $\E=\emptyset$},
\end{cases} 
\end{equation}
where we have written $Q(\gec{G})$ for $\Q(\gec{G};(\alpha,\beta, \gamma),t)$, etc.
\end{definition}
This polynomial is not a priori well-defined. Theorem~\ref{thm:qss} below verifies that it is. 

As a simple example, the gec $\gec{G}$ consisting of a single e-edge and single v-cycle of length two has   $\Q(\gec{G};(\alpha,\beta, \gamma),t)= \alpha t^2 +(\alpha+\beta+\gamma)t$.

We will make use of the following proposition. As the result is easily checked, we omit the proof.
\begin{proposition} \label{spliceorder}
Let $\gec{G}$ be a pointed-gec, $\E$ its set of e-edges, with $e,f, g$ distinct elements of $\E$. Then 
$(\gec{G}\hsplice e) \hsplice f = (\gec{G}\hsplice f) \hsplice e$,
$(\gec{G}\vsplice e) \vsplice f = (\gec{G}\vsplice f) \vsplice e$,
and 
$(\gec{G} / e) / f = (\gec{G} / f) / e$.
Furthermore, the operations $\hsplice e$, $\vsplice f$, and $/ g$ on $\gec{G}$ can be carried out in any order without changing the result.
\end{proposition}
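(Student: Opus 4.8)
\textbf{Proof proposal for Proposition~\ref{spliceorder}.}

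The plan is to verify the commutativity by a local analysis, reducing everything to checking that the operations $\hsplice$, $\vsplice$ and $/$ affect disjoint parts of the pointed-gec. The key observation is that for an e-edge $e=(u,v)$ of a pointed-gec, each of the operations $\gec{G}\hsplice e$, $\gec{G}\vsplice e$, and $\gec{G}/e$ only modifies the subgraph of $\gec{G}$ induced by $u$, $v$, and their incident edges: vertices and edges outside a small neighbourhood of $e$ are left untouched, and which of the five cases in the definition of $\hsplice/\vsplice$ applies is determined entirely by how many loops at $u,v$ there are and how many edges are parallel to $e$. (For the $/e$ operation, $\gec{G}/e$ is just the usual edge contraction, which merges $u$ and $v$ into a single degree-4 vertex, again a purely local modification.)

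First I would handle the case of two distinct e-edges $e$ and $f$. Since $\E$ is a matching, $e$ and $f$ share no endvertex. I would split into subcases according to whether $e$ and $f$ are "independent" (no edge of $\gec{G}$ joins an endvertex of $e$ to an endvertex of $f$) or "adjacent" (some v-edge or e-edge joins them). In the independent case the local neighbourhoods of $e$ and $f$ are disjoint, so the two operations act on disjoint vertex/edge sets and trivially commute; this covers $(\gec{G}\hsplice e)\hsplice f=(\gec{G}\hsplice f)\hsplice e$, and likewise for $\vsplice$ and for $/$, as well as any mixed pair. In the adjacent case one must check that performing the operation at $e$ does not change which of the five cases applies at $f$ (it cannot create or destroy loops at $f$'s endvertices or parallel edges to $f$, since those involve only $f$'s immediate neighbourhood which $e$'s operation does not reach — at worst a new edge appears joining a neighbour of $e$ to a neighbour of $f$, which does not affect the local structure at $f$ itself) and that the resulting new edges are the same regardless of order. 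This last point is the one requiring care: when both $e$ and $f$ are incident to a common third vertex via their "outer" edges, one has to track the relabelling of endpoints carefully. Since $G\hsplice e$ is a formal sum of two graphs, commutativity there means equality of the resulting four-term (or, after collecting, possibly fewer-term) formal sums; this follows because the two choices of reconnection at $e$ are independent of the two choices at $f$.

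The mixed statement — that $\hsplice e$, $\vsplice f$, $/g$ for distinct $e,f,g$ may be performed in any order — then follows by the same disjointness/locality reasoning applied pairwise, since commutation of adjacent transpositions in the sequence of operations generates the full symmetric group on the three operations. The main obstacle, and the only genuinely non-trivial part, is the bookkeeping in the adjacent case when the "outer" edges of $e$ and $f$ are incident to a shared vertex (or to each other), where one must confirm that the added edges $(u_1,u_2)$, $(v_1,v_2)$, etc., coincide under the two orders of operations; everything else is immediate from the fact that the operations are supported on disjoint or essentially-disjoint local pieces. As the paper notes, this is routine, so I would present it as a short case check rather than a long argument.
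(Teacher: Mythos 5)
The paper offers no proof of this proposition (it is stated as ``easily checked''), so your locality strategy is the natural one to attempt, and the conclusion you reach is correct. However, the pivotal claim in your adjacent case --- that ``performing the operation at $e$ does not change which of the five cases applies at $f$'' because the operation ``cannot create or destroy loops at $f$'s endvertices or parallel edges to $f$'' --- is false. A neighbour of $e$ can itself be an endvertex of $f$. Concretely, take the gec of the cog with two vertices joined by two parallel edges: v-edges form a digon on $\{u,u'\}$ and a digon on $\{v,v'\}$, with e-edges $e=(u,v)$ and $f=(u',v')$. In $\gec{G}$ the edge $f$ falls under the first case of the definition (four adjacent edges, with $u_1=u_2=u$ and $v_1=v_2=v$), but in $\gec{G}\vsplice e$ the added edges $(u',u')$ and $(v',v')$ are loops at the endvertices of $f$, so $f$ now falls under the ``two loops'' case. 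Similarly, if the two v-edges at $u$ run to $u'$ and $v'$ respectively, then $\vsplice e$ creates a new edge parallel to $f$. So the step ``the case at $f$ is preserved'' would fail, and the argument as structured breaks there, even though the proposition itself is true.

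The repair is to stop treating the five cases as primitive. All five are instances of one uniform operation: deleting $u$ and $v$ frees the four v-half-edge ends formerly attached to them, and $\vsplice e$ re-pairs the two ends freed at $u$ with each other and likewise at $v$, while $\hsplice e$ is the formal sum over the two ways of pairing a $u$-end with a $v$-end (and $/e$ merely identifies $u$ with $v$, touching no half-edge pairings). The case analysis in the definition is just the translation of this single rewiring into edge-level language when some of the four v-edges coincide or are parallel to $e$. Since $\E$ is a matching, the four half-edge ends rewired at $e$ and the four rewired at $f$ are disjoint (even when a single v-edge joins an endvertex of $e$ to one of $f$), so the two rewirings commute on the nose, and the resulting graph --- obtained by following v-strands through both splices --- is independent of the order. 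This disposes of all pairs, including the mixed ones with $/g$, in one stroke and with no case-change bookkeeping.
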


In light of Proposition~\ref{spliceorder}, if $\gec{A}$ is a subset of e-edges of a gec $\gec{G}$ then we can define 
$\gec{G}\hsplice \gec{A}$, $\gec{G}\vsplice \gec{A}$, and $\gec{G}/ \gec{A}$ to be the result of applying the relevant operations to each edge in $\gec{A}$ in any order. Note that $\gec{G}\hsplice \gec{A}$ is a sum of $2^{|\gec{A}|}$ pointed-gecs.

\begin{theorem}\label{thm:qss}
Let $\gec{G}$ be a pointed-gec with $\E$ its set of e-edges. Then 
\[ \Q(\gec{G};(\alpha,\beta,\gamma),t) = \sum_{(\gec{X},\gec{Y},\gec{Z})\in\mathcal{P}_3(\E)} 
\alpha^{|\gec{X}|} \beta^{|\gec{Y}|} \gamma^{|\gec{Z}|}
t^{k( ((\gec{G} \vsplice \gec{Y}) / \gec{Z}) \hsplice \gec{X} )},
\]
where $k$ is extended linearly over the sum, and $\mathcal{P}_3(\E)$ denotes the set of ordered partitions of $\E$ into three blocks (the blocks may be empty).
\end{theorem}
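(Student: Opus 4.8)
The plan is to mirror the proof of Theorem~\ref{thm:mwd}, replacing the two-way case analysis (delete/extract) by the three-way one (splice $\hsplice$, splice $\vsplice$, contract) dictated by Equation~\eqref{eq:qgeq2}. First I would fix an arbitrary linear order on the e-edges $\E = \{e_1, \dots, e_m\}$ of $\gec{G}$ and expand $\Q(\gec{G})$ by applying Equation~\eqref{eq:qgeq2} repeatedly to $e_1, e_2, \dots$ in that order until no e-edges remain. Unwinding this recursion produces a sum of $3^m$ terms (before collecting), each indexed by a function assigning to every $e_i$ one of the three symbols $\alpha$, $\beta$, $\gamma$ — equivalently, by an ordered partition $(\gec{X}, \gec{Y}, \gec{Z}) \in \mathcal{P}_3(\E)$, where $\gec{X}$ records the e-edges to which the $\hsplice$-branch was applied, $\gec{Y}$ the $\vsplice$-branch, and $\gec{Z}$ the contraction branch. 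The monomial coefficient of such a term is exactly $\alpha^{|\gec{X}|}\beta^{|\gec{Y}|}\gamma^{|\gec{Z}|}$, and the term itself is $t^{k(\gec{H})}$ where $\gec{H}$ is the (sum of) edgeless pointed-gec(s) obtained by carrying out, in the fixed order, the operations dictated by the partition.

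The key step is then to argue that $\gec{H}$ does not depend on the order in which these operations were performed, and equals $((\gec{G}\vsplice \gec{Y}) / \gec{Z})\hsplice \gec{X}$. This is where Proposition~\ref{spliceorder} does the work: it asserts precisely that the operations $\hsplice e$, $\vsplice f$, $/g$ for distinct e-edges commute and may be interleaved arbitrarily. One subtlety worth spelling out is that when Equation~\eqref{eq:qgeq2} is applied recursively, later operations act not on $\gec{G}$ itself but on pointed-gecs already produced by earlier splices/contractions; one must check that an e-edge $e_j$ with $j$ not yet processed still has both its endvertices of degree $3$ in the intermediate pointed-gec (so the splice/contraction operations remain defined), and that applying $e_j$'s operation there agrees with applying it to $\gec{G}$ after the other operations. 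Since splicing or contracting $e_i$ only alters the graph in a neighbourhood of the endvertices of $e_i$, and distinct e-edges of a (pointed-)gec share no endvertex, this neighbourhood is disjoint from $e_j$ and its endvertices, so the operation at $e_j$ is unaffected — this is the content (and the easy proof) behind Proposition~\ref{spliceorder}, which we are entitled to invoke. Because $\gec{G}\hsplice \gec{X}$ is a formal sum of $2^{|\gec{X}|}$ pointed-gecs, I would note that $k$ is extended linearly and that the commutativity statements hold summand-by-summand, so no issue arises from the sums.

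Finally, summing over all $3^m$ terms and grouping them by their associated partition $(\gec{X},\gec{Y},\gec{Z})$ yields
\[
\Q(\gec{G};(\alpha,\beta,\gamma),t) = \sum_{(\gec{X},\gec{Y},\gec{Z})\in\mathcal{P}_3(\E)} \alpha^{|\gec{X}|}\beta^{|\gec{Y}|}\gamma^{|\gec{Z}|}\, t^{k(((\gec{G}\vsplice \gec{Y})/\gec{Z})\hsplice \gec{X})},
\]
as claimed; and since the right-hand side manifestly does not depend on the chosen linear order, this also establishes that $\Q$ is well-defined, discharging the remark after Definition~\ref{def:qfull}. The main obstacle I anticipate is purely bookkeeping: being careful about the order-of-operations issue in the intermediate pointed-gecs and about the linearity conventions for the $\hsplice$-sums, rather than any genuine mathematical difficulty, since Proposition~\ref{spliceorder} has been set up precisely to absorb it. A secondary point to address cleanly is that the base case of Equation~\eqref{eq:qgeq2} contributes $t^{k(\cdot)}$ with $k$ counting components of the resulting edgeless pointed-gec (its v-components together with any free loops), matching the exponent $k(((\gec{G}\vsplice \gec{Y})/\gec{Z})\hsplice \gec{X})$ on the right-hand side.
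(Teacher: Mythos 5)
Your proposal is correct and follows essentially the same route as the paper's proof: expand the recursion of Equation~\eqref{eq:qgeq2} along a fixed linear order of the e-edges, index the resulting $3^{|\E|}$ summands by ordered partitions $(\gec{X},\gec{Y},\gec{Z})\in\mathcal{P}_3(\E)$, and invoke Proposition~\ref{spliceorder} to rearrange the operations into the canonical form $((\gec{G}\vsplice\gec{Y})/\gec{Z})\hsplice\gec{X}$. The extra care you take over the intermediate pointed-gecs and the linear extension of $k$ over the $\hsplice$-sums is sound bookkeeping that the paper leaves implicit.
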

\begin{proof}
The proof is routine and follows that of Theorem~\ref{thm:mwd}. 
Assign an arbitrary linear order to the edges of $\gec{G}$ and compute $\Q(\gec{G};(\alpha,\beta,\gamma),t)$  by applying Equation~\eqref{eq:qgeq2} to e-edges with respect to this order. The resulting summands  of $\Q(\gec{G};(\alpha,\beta,\gamma),t)$ (before collecting terms) are in bijection with the partitions $(\gec{X},\gec{Y},\gec{Z})\in\mathcal{P}_3(\E)$
where the operation ``\,$\hsplice$\,''   is applied to the e-edges in $\gec{X}$,  ``$\,\vsplice$\,'' to those in $\gec{Y}$, and ``\,/\,'' to those in $\gec{Z}$.
By Proposition~\ref{spliceorder} we can carry out these operations in any order and so each summand contributes $\alpha^{|\gec{X}|} \beta^{|\gec{Y}|} \gamma^{|\gec{Z}|}
t^{k( ((\gec{G} \vsplice \gec{Y}) / \gec{Z}) \hsplice \gec{X} )}$ to $\Q(\gec{G};(\alpha,\beta,\gamma),t)$. The result follows. 
\end{proof}

We shall now relate the polynomials $Q(\gec{G})$ and $\Omega(\bG,P)$.
Let $\gec{G}$ be a pointed-gec. A (non-unique) pointed ribbon graph can be obtained from $\gec{G}$ as follows.  
First construct a gec $\gec{G}'$  by ``splitting'' open each 4-valent vertex into two 2-valent vertices and add an e-edge between these degree two vertices, as indicated in Figure~\ref{fig:sp}.  Note that $\gec{G}'$ is not uniquely constructed as there are three ways to split each degree 4-vertex. Next chose any ribbon graph $\bG$ that represents  $\gec{G}'$ via the natural correspondences in Propositions~\ref{prop:ppcog2} and~\ref{prop:zxc}. The edge-pointed ribbon graph is $(\bG,P)$ where $P$ is the set of edges that arose from the degree four vertices of $\gec{G}$. We say that $(\bG,P)$ is an edge-pointed ribbon graph \emph{representing} $\gec{G}$. 

\begin{figure}
 \includegraphics[scale=.25]{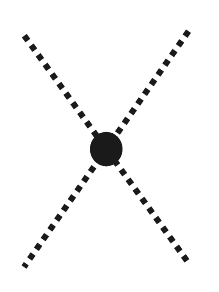}  
\qquad \raisebox{5mm}{$\longrightarrow$}\qquad
  \includegraphics[scale=.25]{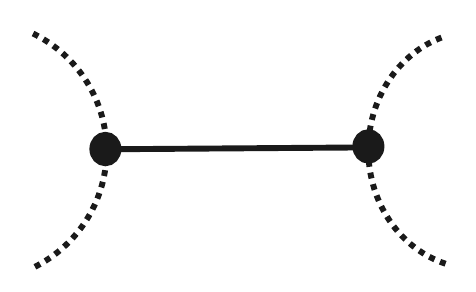}  
\caption{Constructing $\gec{G}'$ from $\gec{G}$.}
\label{fig:sp}
\end{figure}

Observe that $\gec{G}$ can be recovered from $(\bG,P)$ by constructing a gec representing $\bG$ (again via the natural correspondences of Propositions~\ref{prop:ppcog2} and~\ref{prop:zxc}), then 
contracting all the edges in that gec that correspond to edges in $P$.

It is easy to see that if $(\bG,P)$ is an edge-pointed ribbon graph \emph{representing} $\gec{G}$, then 
$(\bG,P\cup\{e\})$ represents $\gec{G}/e$, while 
$(\bG/e,P-\{e\})+(\bG\tcon e,P-\{e\})$ represents $\gec{G}\hsplice e$,
and 
 $(\bG\ba e,P-\{e\})$ represents $\gec{G}\vsplice e$.
These observations allow us to prove the following result.

\begin{lemma}\label{lem:dhg}
Let $\gec{G}$ be a pointed-gec and $(\bG,P)$ be an edge-pointed ribbon graph representing it. Then
\[ \Q(\gec{G};(\alpha,\beta,\gamma),t) =
\Omega(\bG,P; \gamma, \alpha, \beta,\alpha, t).
\]
\end{lemma}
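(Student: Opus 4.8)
The statement to prove is Lemma~\ref{lem:dhg}: for a pointed-gec $\gec{G}$ and an edge-pointed ribbon graph $(\bG,P)$ representing it, $\Q(\gec{G};(\alpha,\beta,\gamma),t) = \Omega(\bG,P;\gamma,\alpha,\beta,\alpha,t)$. The natural approach is induction on the number of e-edges of $\gec{G}$ not covered by the "pointing" operation, i.e.\ on $|\E(\gec{G})|$ (equivalently, on the number of degree-$3$ vertices of $\gec{G}$). I would compare the two defining recursions: Definition~\ref{def:qfull} (Equation~\eqref{eq:qgeq2}) for $\Q$ and Equation~\eqref{eq:qdcep} for $\Omega$, using the parameter substitution $(w,x,y,z)=(\gamma,\alpha,\beta,\alpha)$.

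\textbf{Base case.} When $\E(\gec{G})=\emptyset$, the pointed-gec $\gec{G}$ consists only of v-cycles together with $4$-valent vertices that have all been produced by contraction (so there are no e-edges left to split or contract). Correspondingly $(\bG,P)$ has $E(\bG)=P$. Then $\Q(\gec{G})=t^{k(\gec{G})}$ and $\Omega(\bG,P)=t^{k(\bG)}$, so I need $k(\gec{G})=k(\bG)$. This follows from how $\gec{G}$ is recovered from $(\bG,P)$ (stated in the paragraph before the lemma): one builds a gec representing $\bG$ and contracts the edges corresponding to $P$; contraction and the natural correspondences of Propositions~\ref{prop:ppcog2} and~\ref{prop:zxc} preserve the number of connected components, so $k(\gec{G})=k(\bG)$, giving the base case.

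\textbf{Inductive step.} Suppose $\E(\gec{G})\neq\emptyset$ and pick $e\in\E(\gec{G})$; let $e$ also denote the corresponding edge of $\bG$, which lies outside $P$. Apply Equation~\eqref{eq:qdcep} to $(\bG,P)$ at $e$:
\[
\Omega(\bG,P) = \gamma\,\Omega(\bG,P\cup\{e\}) + \alpha\,\Omega(\bG/e,P-\{e\}) + \beta\,\Omega(\bG\ba e,P-\{e\}) + \alpha\,\Omega(\bG\tcon e,P-\{e\}),
\]
using $(w,x,y,z)=(\gamma,\alpha,\beta,\alpha)$. Now invoke the three "representing" facts recorded just before the lemma: $(\bG,P\cup\{e\})$ represents $\gec{G}/e$; $(\bG/e,P-\{e\})+(\bG\tcon e,P-\{e\})$ represents $\gec{G}\hsplice e$; and $(\bG\ba e,P-\{e\})$ represents $\gec{G}\vsplice e$. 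Each of $\gec{G}/e$, $\gec{G}\hsplice e$, $\gec{G}\vsplice e$ has strictly fewer e-edges (the $\hsplice$ and $\vsplice$ operations set $\E-\{e\}$ as the new e-edge set, and $/e$ removes $e$ as an e-edge while producing a $4$-valent vertex), so the inductive hypothesis applies to each resulting pointed-gec (extending $\Q$ and $\Omega$ linearly over the formal sums, and noting that $\Omega$ is likewise linear over sums of edge-point ribbon graphs). Substituting, the $\alpha\,\Omega(\bG/e,P-\{e\}) + \alpha\,\Omega(\bG\tcon e,P-\{e\})$ terms combine to $\alpha\,\Q(\gec{G}\hsplice e)$, the $\beta$ term becomes $\beta\,\Q(\gec{G}\vsplice e)$, and the $\gamma$ term becomes $\gamma\,\Q(\gec{G}/e)$. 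This is exactly the right-hand side of Equation~\eqref{eq:qgeq2}, so $\Omega(\bG,P;\gamma,\alpha,\beta,\alpha,t) = \Q(\gec{G};(\alpha,\beta,\gamma),t)$, completing the induction.

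\textbf{Main obstacle.} The computational heart is unremarkable; the genuine subtlety is bookkeeping around well-definedness and the "representing" correspondence. Specifically: $\Q$ on pointed-gecs is only known to be well-defined via Theorem~\ref{thm:qss}, so one should either phrase the induction using a fixed edge order (mirroring the proof of Theorem~\ref{thm:qss}) or simply cite Theorem~\ref{thm:qss} to say the particular choice of $e$ is immaterial; similarly $\Omega$'s recursion is order-independent by its defining theorem in~\cite{zbMATH07021383}. The other point requiring a little care is verifying that the three "representing" relations genuinely hold at the level of the formal $\mathbb{Z}$-module — in particular that contracting $e$ in a gec representing $\bG$ and pointing it (adding $e$ to $P$) are compatible operations, and that the $\hsplice$ case really does split into the two summands $\bG/e$ and $\bG\tcon e$ — but these are precisely the observations the paper has already laid out in the paragraph preceding the lemma (and earlier, when relating $\gec{G}\vsplice e$, $\gec{G}\hsplice e$ to $\bG\ba e$, $\bG/e+\bG\tcon e$), so the proof can lean on them directly rather than re-deriving them.
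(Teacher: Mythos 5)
Your proof is correct and follows essentially the same route as the paper's: induction on the number of e-edges, matching the recursion \eqref{eq:qgeq2} for $\Q$ against the recursion \eqref{eq:qdcep} for $\Omega$ with the substitution $(w,x,y,z)=(\gamma,\alpha,\beta,\alpha)$ via the three ``representing'' relations stated before the lemma. Your added remarks on the base case ($k(\gec{G})=k(\bG)$) and on well-definedness are sensible elaborations but do not change the argument.
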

\begin{proof}
We prove the result by induction on the number of e-edges of $\gec{G}$. When $\gec{G}$ has no e-edges then $P$ is the edge set of $\bG$. The base case for the induction follows. Now suppose that  $\gec{G}$ has some e-edges and the result holds for all pointed-gecs with fewer e-edges. Then
\begin{align*}
\Q(\gec{G};(\alpha,\beta,\gamma),t) &=
\alpha\, \Q(\gec{G} \hsplice e;(\alpha,\beta,\gamma),t) +\beta \, \Q(\gec{G} \vsplice e;(\alpha,\beta,\gamma),t) +\gamma \, \Q(\gec{G} / e;(\alpha,\beta,\gamma),t)
\\&=
\alpha\, \Omega(\bG/e,P-\{e\};\gamma, \alpha, \beta,\alpha, t)+ \alpha\, \Omega(\bG\tcon e,P-\{e\};\gamma, \alpha, \beta,\alpha, t) 
\\&\qquad
+\beta \,\Omega(\bG\ba e,P-\{e\};\gamma, \alpha, \beta,\alpha, t)
+\gamma \,\Omega(\bG,P\cup\{e\};\gamma, \alpha, \beta,\alpha, t)
\\&= \Omega(\bG,P;\gamma, \alpha, \beta,\alpha, t),
\end{align*}
and the result follows by induction.
\end{proof}

\medskip

Next we show that $\Q(\gec{G};(\alpha,\beta,\gamma),t)$ counts certain colourings of a gec.
Let $\gec{G}$ be a pointed-gec, $\E$ its set of e-edges, and $\V$ its set of v-edges. Let $k\in \mathbb{N}$. Then a \emph{$k$-valuation} on $\gec{G}$ is a colouring  $\phi:\V \rightarrow [k]$ such that each  e-edge is incident to an even number (possibly zero) of v-edges of each colour, and all v-edges incident to a degree four vertex are of the same colour.

For an e-edge $e=(u,v)\in \E$ suppose $g_u$ and $f_u$ are the (not necessarily distinct) v-edges incident to $u$, and $g_v$ and $f_v$ are the (not necessarily distinct) v-edges incident to $v$. 
As shown in Table~\ref{tablekvaluationtypes}, we say a $k$-valuation is of type \emph{total} at $e$ if $\phi(g_u)=\phi(f_u)=\phi(g_v)=\phi(f_v)$; it is of type \emph{split} at $e$ if $\phi(g_u)=\phi(f_u)\neq\phi(g_v)=\phi(f_v)$; and is of type \emph{spliced} at $e$ if either $\phi(g_u)=\phi(g_v)\neq\phi(f_u)=\phi(f_v)$, or 
$\phi(g_u)=\phi(f_v)\neq\phi(f_u)=\phi(g_v)$.

\begin{table}[t]
\centering
\begin{tabular}{|c|c|c|c|}
\hline
\quad
\labellist
\small\hair 2pt
\pinlabel $e$ at 110 88
\pinlabel $u$ at 20 68
\pinlabel $v$ at 198 68
\pinlabel $g_u$ at -5 125
\pinlabel $f_u$ at -5 11
\pinlabel $g_v$ at 233 125
\pinlabel $f_v$ at 233 11
\endlabellist
 \includegraphics[scale=.25]{k-valuation}  
 \quad
 &
 \labellist
\small\hair 2pt
\pinlabel $i$ at 57 110
\pinlabel $i$ at 170 110
\pinlabel $i$ at 57 25
\pinlabel $i$ at 170 25
\endlabellist
 \includegraphics[scale=.25]{k-valuation}  
&
 \labellist
\small\hair 2pt
\pinlabel $i$ at 57 110
\pinlabel $j$ at 170 110
\pinlabel $i$ at 57 25
\pinlabel $j$ at 170 25
\endlabellist
 \includegraphics[scale=.25]{k-valuation}  
&
 \labellist
\small\hair 2pt
\pinlabel $i$ at 57 110
\pinlabel $i$ at 170 110
\pinlabel $j$ at 57 25
\pinlabel $j$ at 170 25
\endlabellist
 \includegraphics[scale=.25]{k-valuation}  
 \quad
  \labellist
\small\hair 2pt
\pinlabel $i$ at 57 110
\pinlabel $j$ at 170 110
\pinlabel $j$ at 57 25
\pinlabel $i$ at 170 25
\endlabellist
 \includegraphics[scale=.25]{k-valuation}  
\\
\hline
e-edge $e$ & total & split & spliced
\\
\hline

\end{tabular}
\caption{$k$-valuation types on an e-edge where $i\neq j$.}
\label{tablekvaluationtypes}
\end{table}

\begin{theorem} \label{thm:kval}
Let $\gec{G}$ be a pointed-gec,  $\E$ be its set of e-edges and $k \in \mathbb{N}$. Then
\[\gec{Q}(\gec{G};(\alpha,\beta,\gamma),k)= 
\sum_{\substack{\phi \\ \text{ a $k$-valuation} \\ \text{of } \gec{G}}}
(2\alpha+\beta+\gamma)^{\ktotal(\phi)} \alpha^{\ksplice(\phi)} \beta^{\ksplit(\phi)},\]
where $\ktotal(\phi)$ is the number of total e-edges  in the $k$-valuation $\phi$, $\ksplit(\phi)$ is the number of split e-edges  in $\phi$, 
$\ksplice(\phi)$ is the number of spliced e-edges in $\phi$.
\end{theorem}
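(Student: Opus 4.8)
The plan is to connect the recursion defining $\gec{Q}$ in Definition~\ref{def:qfull} to an analogous recursion for the $k$-valuation generating function, and then invoke the uniqueness already established in Theorem~\ref{thm:qss}. More precisely, let $N(\gec{G}) = \sum_{\phi} (2\alpha+\beta+\gamma)^{\ktotal(\phi)} \alpha^{\ksplice(\phi)} \beta^{\ksplit(\phi)}$ where the sum is over $k$-valuations of $\gec{G}$. I would show directly that $N$ satisfies the same linear recursion~\eqref{eq:qgeq2} as $\gec{Q}$, namely $N(\gec{G}) = \alpha\, N(\gec{G}\hsplice e) + \beta\, N(\gec{G}\vsplice e) + \gamma\, N(\gec{G}/e)$ for any e-edge $e$, together with the base case $N(\gec{G}) = k^{k(\gec{G})}$ when $\gec{G}$ has no e-edges. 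Since Theorem~\ref{thm:qss} already establishes that $\gec{Q}$ is the unique well-defined output of this recursion (independent of edge order), equality of $\gec{Q}$ and $N$ follows by induction on $|\E|$.

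The base case is immediate: if $\gec{G}$ has no e-edges, its v-edges form a disjoint union of cycles (and the degree-4 condition is vacuous), so a $k$-valuation is just an arbitrary assignment of one of $k$ colours to each v-cycle; there are $k^{k(\gec{G})}$ of these, each contributing the empty product $1$. For the inductive step, fix an e-edge $e=(u,v)$ and partition the $k$-valuations of $\gec{G}$ according to their type at $e$. The key observation is that $k$-valuations of $\gec{G}$ of a given type at $e$ correspond bijectively to $k$-valuations of the appropriate operand: those of type \emph{split} at $e$ biject with $k$-valuations of $\gec{G}\vsplice e$ (the operation $\vsplice$ separates the two sides of $e$, forcing the two resulting local colourings to be assigned independently — and the parity condition at $e$ is automatic once each side is monochromatic); those of type \emph{spliced} at $e$ — of which there are two sub-cases — biject with $k$-valuations of the two summands comprising $\gec{G}\hsplice e$; and those of type \emph{total} at $e$ need to be split among all three operations. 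This last point is the crux: a total $k$-valuation ($\phi(g_u)=\phi(f_u)=\phi(g_v)=\phi(f_v)$) is simultaneously a valid valuation of $\gec{G}\vsplice e$, of both summands of $\gec{G}\hsplice e$, and of $\gec{G}/e$, and it arises exactly once from each; conversely a valuation of an operand that does \emph{not} come from a total valuation of $\gec{G}$ comes from exactly one type. Counting multiplicities gives each total valuation a combined weight $\beta + 2\alpha + \gamma$ (from $\vsplice$, the two $\hsplice$ summands, and $/e$ respectively), each spliced valuation weight $\alpha$ (it appears in exactly one $\hsplice$ summand), and each split valuation weight $\beta$, which matches the stated formula.

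The main obstacle — and the step requiring the most care — is verifying the bijections and multiplicity count precisely at the degenerate configurations of $e$: the cases where $e$ meets a loop, meets two loops, has a parallel edge, or sits in a triple edge (the non-generic rows of Table~\ref{tab:splice}). In these cases the v-edges $g_u, f_u, g_v, f_v$ are not all distinct, so the notions of "total/split/spliced" partially collapse, and the splice/split operations produce free loops, which must be reconciled with the requirement that $k$-valuations of the operands genuinely correspond to valuations of $\gec{G}$ with the right type-weights. I would handle these by a case analysis mirroring the bullet-point definition of $\vsplice$ and $\hsplice$, checking in each case that: (i) the parity condition "each e-edge incident to an even number of v-edges of each colour" is preserved under the operations, noting in particular that contracting $e$ in $\gec{G}/e$ turns $u,v$ into a single degree-4 vertex whose four incident v-edge-ends must all receive the same colour — which is precisely the "total" condition; and (ii) the degree-4 monochromaticity constraint is correctly transferred. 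Once these local checks are in place, summing over $e$-types and applying the inductive hypothesis to each operand (each having one fewer e-edge) completes the argument.
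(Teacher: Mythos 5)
Your proposal is correct in outline but takes a genuinely different route from the paper's. The paper argues indirectly: it represents $\gec{G}$ by an edge-pointed ribbon graph $(\bG,P)$, observes that contracting all e-edges yields the medial graph of $\bG$, and then imports the $k$-valuation interpretation of $\Omega(\bG,P)$ from the cited reference via Lemma~\ref{lem:dhg}. You instead show directly that the $k$-valuation generating function satisfies the recursion of Definition~\ref{def:qfull} and conclude by induction, using Theorem~\ref{thm:qss} only for well-definedness. This buys a self-contained argument that avoids the ribbon-graph and medial-graph machinery entirely, at the cost of the case analysis over the five rows of Table~\ref{tab:splice}. Your multiplicity count is the right one: the parity condition at $e$ (read on the four v-edge-ends at its endpoints, with multiplicity when a v-edge is a loop) forces every valuation to be total, split or spliced there; valuations of $\gec{G}\vsplice e$ lift to the total-or-split ones, each $\hsplice$ summand to total-or-(one kind of)-spliced, and $\gec{G}/e$ to the total ones, giving weights $2\alpha+\beta+\gamma$, $\alpha$, $\beta$ as claimed. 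I checked that this survives the degenerate rows (for instance, when $e$ meets a loop the spliced type is empty and both $\hsplice$ summands, like $\gec{G}/e$, pick out exactly the total valuations), so the deferred verification does go through.

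One correction to your base case: a pointed-gec with $\E=\emptyset$ has no degree-$3$ vertices, so it is a disjoint union of free loops and components of a $4$-regular graph; its v-edges do not in general form a disjoint union of cycles, and the degree-$4$ monochromaticity condition is not vacuous --- it is precisely what forces each component to be monochromatic and hence yields $k^{k(\gec{G})}$ valuations. The count you state is correct, but for this reason rather than the one you give.
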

\begin{proof}[Sketch of proof.]
Let $\gec{G}$ be a pointed-gec and  $(\bG,P)$ be any edge-pointed ribbon graph representing it. Let $P_{\gec{G}}$ be the set of degree four vertices in $\gec{G}$.
Contracting all of the e-edges of  $\gec{G}$ results in a 4-regular graph $F$. It is clear that  $F$ is the medial graph of $\bG$. Moreover, the vertices of $F$ that came from those in $P_{\gec{G}}$ correspond to the edges in $P$.
It is also clear that a $k$-valuation of $\gec{G}$ corresponds to a $k$-valuation of $F$  in which all the vertices in $P$ are total. (See~\cite[Section~1 and Definition~4.1]{zbMATH07021383} for the definition of medial graphs, $k$-valuations, and related terms.)
Thus we have
\begin{align*}
Q(\gec{G};(\alpha,\beta,\gamma),k) &= \Omega(\bG,P; \gamma, \alpha, \beta,\alpha, t)
\\&=
\sum_{\substack{\phi \\ \text{ a $k$-valuation} \\ \text{of } F}}  (2\alpha+\beta+\gamma)^{\mathrm{tot}(\phi)} \alpha^{\mathrm{wh}(\phi)+\mathrm{cr}(\phi)} \beta^{\mathrm{bl}(\phi)}
\\&=
\sum_{\substack{\phi \\ \text{ a $k$-valuation} \\ \text{of } \gec{G}}}
(2\alpha+\beta+\gamma)^{\ktotal(\phi)} \alpha^{\ksplice(\phi)} \beta^{\ksplit(\phi)},
\end{align*}
where the first equality is from Lemma~\ref{lem:dhg}, the second is~\cite[Theorem~4.2]{zbMATH07021383}, and the last is from the above mentioned correspondence between $k$-valuations of $\gec{G}$ and $F$.
\end{proof}

The polynomials $\Q(\gec{G})$ of this section and $\calM(\gec{G})$ of the previous section are not equivalent invariants. For example, $\calM(\gec{G})$ will distinguish the two gecs shown in Figure~\ref{fig.mq}, but $\Q(\gec{G})$ does not.  On the other hand, $\Q(\gec{G})$ will distinguish the two gecs shown in Figure~\ref{fig.qm}, but $\calM(\gec{G})$ does not.

\begin{figure}
\includegraphics[width=8cm]{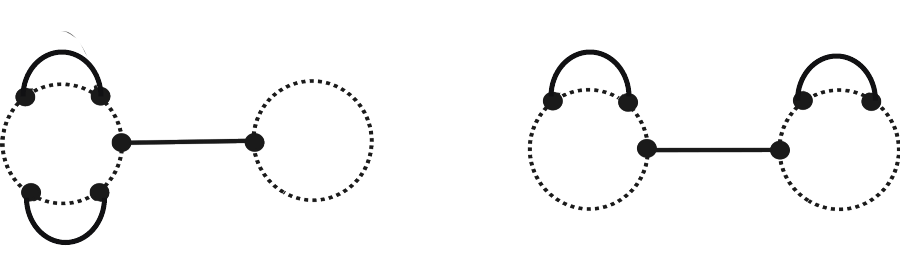}

\caption{Two gecs distinguished by $\calM(\gec{G})$ but not by $\Q(\gec{G})$.}
\label{fig.mq}
\end{figure}

\begin{figure}
\includegraphics[width=8cm]{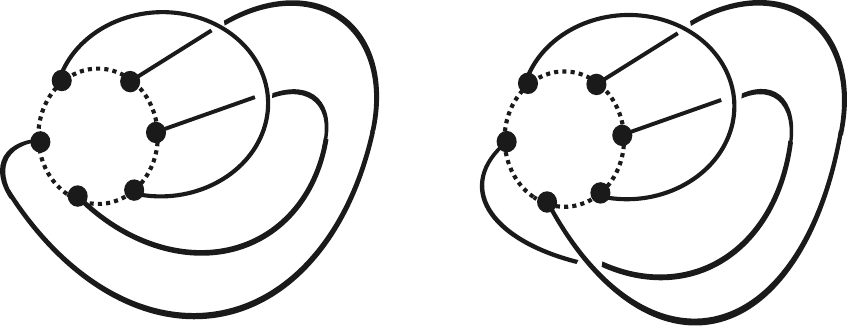}

\caption{Two gecs distinguished by $\Q(\gec{G})$ but not by $\calM(\gec{G})$.}
\label{fig.qm}
\end{figure}

\section{Cog invariants via the Yamada polynomial}\label{ss.yam}
 
In this section we obtain cog invariants via knot theory. The basic idea is to construct a cog invariant by starting with a suitable invariant, here the Yamada polynomial for cogs embedded in 3-space, and then restricting the invariant to values that do not depend upon the particular embedding of the cog in 3-space. We begin with a brief overview of spatial graphs and flat vertex graphs following~\cite{yamada89}.

A \emph{spatial graph} $\spa{G}$ is a graph embedded in $\mathbb{R}^3$. If for each vertex $v$ of $\spa{G}$ there exists a neighbourhood $B_v$ of $v$ and a plane $P_v$ such that $\spa{G} \cap B_v \subset P_v$ then we say that $\spa{G}$ is a \emph{flat vertex graph}.\footnote{Flat vertex graphs are sometimes called rigid vertex graphs in the literature. However, the term rigid vertex graph is also used in the literature to describe what are here called cogs. Ordinarily the potential for confusion is limited as only one of these two mathematical objects is considered. However, as we consider both objects we shall avoid using the term rigid vertex graph.}
Two flat vertex graphs $\spa{G}$ and $\spa{G}'$ are \emph{flatly isotopic} if there exists an isotopy $h_t: \mathbb{R}^3\rightarrow \mathbb{R}^3$, for $t\in[0,1]$, such that $h_0=\mathrm{id}$, $h_1(\spa{G})=\spa{G}'$, and each  $h_t(\spa{G})$ is a flat vertex graph. 

A projection $p:\mathbb{R}^3\rightarrow \mathbb{R}^2$ is a regular projection of a flat vertex graph $\spa{G}$ if each multipoint of $p(\spa{G})$ is a double point where two edges cross transversely. The image $p(\spa{G})$  together with information about the over crossing at all double points of $p(\spa{G})$ is a \emph{diagram} of $\spa{G}$. Examples of flat vertex graph diagrams can be found in Figure~\ref{fig:yamada}. 

It is likely that the reader has some familiarity with Reidemeister's Theorem in knot theory which states that two link diagrams  represent  ambient isotopic links if and only if the two link diagrams are related by a finite sequence of Reidemeiser Moves. An analogous result holds for flat vertex graphs. The result is given in~\cite[Lemma~1]{yamada89} which gives that two flat vertex graphs are flatly isotopic if and only if their diagrams are related by a set of \emph{flat deformation moves}. As we do not need their details here, we shall not write down the flat deformation moves and instead refer the reader to~\cite[Figure~1, Items~(I)--(V)]{yamada89} which defines them. (See also~\cite[Section~III]{Kau89} which details the result in the 4-regular case.) 

\medskip

We now return to cogs. 
A flat vertex graph is a spatial graph where there are planes containing small neighbourhoods of the vertices. We can choose an orientation of each of these planes to determine a cyclic order of the half-edges incident to each vertex. Modulo the two possible orientations of each plane we obtain an undirected cyclic order at each vertex, and hence a cog, which we call the \emph{underlying cog} of the flat vertex graph.
Thus, we may view a flat vertex graph as an embedding in $\mathbb{R}^3$ of its underlying cog.

In terms of diagrams, the underlying cog is obtained from the flat vertex graph diagram by reading the cyclic orders of incident half-edges according to an orientation of the plane to obtain a rotation system, then taking the underlying cog of this rotation system. Observe that the flat deformation move~(V) in~\cite[Figure~1]{yamada89} means that  cyclic orders can be reversed, but the cog's underlying graph is unchanged by the other flat deformation moves. Again we see that a flat vertex graph diagram determines a cog.

An immediate observation is that as flatly isotopic flat vertex graphs give rise to the same underlying cog, cog invariants, such as those defined in previous sections, do define flat vertex graph invariants, albeit rather coarse ones that ignore much of the information about the embedding in $\mathbb{R}^3$. However, here our interest is in the other direction, using known flat vertex graph invariants to obtain cog invariants.

\begin{figure}[t]
    \centering
    \includegraphics[width=.1\linewidth]{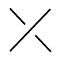}
   \quad \raisebox{5mm}{$\longleftrightarrow$} \quad
    \rotatebox{90}{\includegraphics[width=.1\linewidth]{crossing}}
    \caption{A crossing change.}
    \label{fig:crossch}
\end{figure}

A \emph{crossing change} in a flat vertex graph diagram is the move that swaps the under- and over-crossing arcs at a double point in the diagram, as in Figure~\ref{fig:crossch}. This corresponds to allowing edges of flat vertex graphs in $\mathbb{R}^3$ to pass through each other. It is not hard to see that two flat vertex graphs are related by flat isotopy and passing edges through each other if and only if they have the same underlying cog. 
This means that if we have an invariant $I$ of flat vertex graphs  whose value does not change under crossing changes in their diagrams then $I$ is a cog invariant. We shall now use the Yamada polynomial to obtain two cog invariants via this process. 

\medskip

The \emph{Yamada polynomial}, introduced in~\cite{yamada89}, is a polynomial of flat vertex graphs. By \cite[Proposition~3]{yamada89}), its value $\spa{R}(\spa{G})\in \mathbb{Z}[A^{\pm1}]$ on a flat vertex graph diagram $\spa{G}$ can be computed recursively by the skein relation
\begin{equation}\label{eq:ydef1}
\spa{R}\Big(\raisebox{-.25cm}{\includegraphics[width=.7cm]{crossing}}\Big) = A\, \spa{R}\Big(\raisebox{-.25cm}{\includegraphics[width=.7cm]{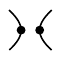}}\Big) + A^{-1}\, \spa{R}\Big(\raisebox{-.25cm}{\includegraphics[width=.7cm]{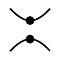}}\Big) + \spa{R}\Big(\raisebox{-.25cm}{\includegraphics[width=.7cm]{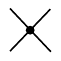}}\Big),
\end{equation}
together with the deletion-contraction relation
\begin{equation}\label{eq:ydef2}
    \spa{R}(\spa{G}) = \spa{R}(\spa{G}\ba e) + \spa{R}(\spa{G} / e), \quad \text{for any non-loop edge $e$,}
\end{equation}
and rules 
\begin{align}
     \spa{R}(\spa{G}_1 \sqcup \spa{G}_2) &= \spa{R}(\spa{G}_1)\cdot \spa{R}(\spa{G}_2),\label{eq:ydef3}\\
     \spa{R}(\spa{G}_1 \vee \spa{G}_2) &= -\spa{R}(\spa{G}_1)\cdot \spa{R}(\spa{G}_2),\label{eq:ydef4}
     \\
     \spa{R}(\bullet)&=-1, \label{eq:ydef5}
    \\
    \quad \spa{R}(\spa{B}_1) &= \sigma,\label{eq:ydef6}
\end{align}
where $\sigma = (A+1+A^{-1})$. 
Here the four diagrams in the skein relation~\eqref{eq:ydef1} are identical except in the regions shown: $\spa{G}\ba e$ is the diagram obtained by removing the edge $e$ from $\spa{G}$; $\spa{G}/ e$ is the diagram obtained by identifying the edge $e$ in $\spa{G}$ to a point (by forming the quotient space $\mathbb{R}^2/\{e\}$), with that point forming a new vertex;
$\spa{G}_1 \sqcup \spa{G}_2$ consists of a  diagram in which there is a simple closed curve separating $\mathbb{R}^2$ into two regions one containing   $\spa{G}_1$ the other $\spa{G}_2$; $\spa{G}_1 \vee \spa{G}_2$ is a diagram in which  there is a simple closed curve intersecting a single vertex of the graph  and separating $\mathbb{R}^2$ into two regions one containing all of  $\spa{G}_1$ the other all $\spa{G}_2$ (except for the vertex on the separating curve); $\bullet$ is the diagram of a single vertex; and $\spa{B}_1$ is the diagram consisting of one vertex, one edge and no crossings.

The polynomial $\spa{R}(\spa{G})$ is not invariant under all of the flat deformation moves and hence does not define a flat isotopy invariant of flat vertex graphs.
However, a normalisation of it does. 
We have that 
\[\overline{\spa{R}}(\spa{G})= (-A)^{-m}\,\spa{R}(\spa{G}), \]
where $m$ is the degree of the lowest order term in $\spa{R}(\spa{G})$ (so  $\overline{\spa{R}}(\spa{G})\in \mathbb{Z}[A]$),
is invariant under all the flat deformation moves and hence defines a flat isotopy invariant of flat vertex graphs (see \cite[Theorem~3]{yamada89}).

\medskip

Now we describe our cog invariants.
We shall use $\overline{\spa{R}}_p(\spa{G})$ (respectively, $\spa{R}_p(\spa{G})$) to denote the value of  $\overline{\spa{R}}(\spa{G})$ (respectively, $\spa{R}(\spa{G})$) evaluated at $A=p$.
Observe from the skein relation~\eqref{eq:ydef1} for $\spa{R}$ that when $A \in \{-1, 1\}$ we have 
$
\spa{R}\Big(\raisebox{-.25cm}{\includegraphics[width=.7cm]{crossing}}\Big) = 
\spa{R}\Big(\raisebox{-.25cm}{\rotatebox{90}{\includegraphics[width=.7cm]{crossing}}}\Big).
   $
Thus both $\overline{\spa{R}}_{-1}(\spa{G}) = \spa{R}_{-1}(\spa{G})$ and $|\overline{\spa{R}}_{1}(\spa{G})| = |\spa{R}_1(\spa{G})|$ are invariant under both flat isotopy and crossing changes and hence give cog invariants. We record this in the following proposition.

\begin{proposition}\label{prop:ya}
Let $\spa{G}$ and $\spa{G}'$ be flat vertex graph diagrams. If $\spa{G}$ and $\spa{G}'$ determine the same underlying cog
then 
\[ \spa{R}_{-1}(\spa{G})= \spa{R}_{-1}(\spa{G}')\quad \text{and} \quad |\spa{R}_{1}(\spa{G})|= |\spa{R}_{1}(\spa{G}')|.\]
That is, $\spa{R}_{-1}$ and $|\spa{R}_{1}|$ define cog invariants.
\end{proposition}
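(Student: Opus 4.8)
The plan is to show that both $\spa{R}_{-1}$ and $|\spa{R}_1|$ are unchanged by (i) flat isotopy and (ii) crossing changes, and then invoke the characterisation, stated just before the proposition, that two flat vertex graph diagrams determine the same underlying cog if and only if they are related by flat isotopy together with crossing changes. Invariance under flat isotopy is already in hand: by \cite[Theorem~3]{yamada89} the normalised polynomial $\overline{\spa{R}}(\spa{G})$ is a flat isotopy invariant, so its evaluations $\overline{\spa{R}}_{-1}$ and $\overline{\spa{R}}_1$ are too, and the discussion above the proposition records that $\overline{\spa{R}}_{-1}(\spa{G}) = \spa{R}_{-1}(\spa{G})$ and $|\overline{\spa{R}}_1(\spa{G})| = |\spa{R}_1(\spa{G})|$, so $\spa{R}_{-1}$ and $|\spa{R}_1|$ inherit flat isotopy invariance. (The passage to absolute values at $A=1$ is needed precisely because the normalisation factor $(-A)^{-m}$ becomes $(-1)^{-m} = \pm 1$, which is only controlled up to sign once we no longer track the exponent $m$.)

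The substantive point is invariance under a crossing change. Here I would argue directly from the skein relation~\eqref{eq:ydef1}. At $A = 1$, the relation reads
\begin{equation}\label{eq:ya_skein_one}
\spa{R}_1\Big(\raisebox{-.25cm}{\includegraphics[width=.7cm]{crossing}}\Big) = \spa{R}_1\Big(\raisebox{-.25cm}{\includegraphics[width=.7cm]{smoothing_v}}\Big) + \spa{R}_1\Big(\raisebox{-.25cm}{\includegraphics[width=.7cm]{smoothing_h}}\Big) + \spa{R}_1\Big(\raisebox{-.25cm}{\includegraphics[width=.7cm]{vertex}}\Big),
\end{equation}
and the right-hand side is manifestly symmetric in the two local pictures of a crossing: swapping over- and under-strand at the distinguished double point leaves all three terms on the right unchanged (the two smoothings and the $4$-valent-vertex replacement do not see the over/under information at that point). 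Hence $\spa{R}_1$ takes the same value on a diagram and on the diagram obtained by a single crossing change, and by induction on the number of changed crossings it is invariant under arbitrary crossing changes; consequently so is $|\spa{R}_1|$. The same computation at $A = -1$ gives
\begin{equation}\label{eq:ya_skein_minusone}
\spa{R}_{-1}\Big(\raisebox{-.25cm}{\includegraphics[width=.7cm]{crossing}}\Big) = -\spa{R}_{-1}\Big(\raisebox{-.25cm}{\includegraphics[width=.7cm]{smoothing_v}}\Big) - \spa{R}_{-1}\Big(\raisebox{-.25cm}{\includegraphics[width=.7cm]{smoothing_h}}\Big) + \spa{R}_{-1}\Big(\raisebox{-.25cm}{\includegraphics[width=.7cm]{vertex}}\Big),
\end{equation}
whose right-hand side is again symmetric under the crossing change, giving crossing-change invariance of $\spa{R}_{-1}$.

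Finally I would assemble the two pieces: given $\spa{G}$ and $\spa{G}'$ with the same underlying cog, pick a finite sequence of flat isotopies and crossing changes carrying one diagram to the other (this exists by the characterisation recalled above), and apply the invariance established in each of the two preceding paragraphs step by step; each flat isotopy fixes $\spa{R}_{-1}$ and $|\spa{R}_1|$ by \cite[Theorem~3]{yamada89}, and each crossing change fixes them by \eqref{eq:ya_skein_one}/\eqref{eq:ya_skein_minusone}. This yields $\spa{R}_{-1}(\spa{G}) = \spa{R}_{-1}(\spa{G}')$ and $|\spa{R}_1(\spa{G})| = |\spa{R}_1(\spa{G}')|$, and since every cog arises as the underlying cog of some flat vertex graph diagram, $\spa{R}_{-1}$ and $|\spa{R}_1|$ descend to well-defined cog invariants. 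The only delicate point, and the one I would be most careful about, is the bookkeeping of signs at $A=1$: one must check that the normalisation exponent $m$ in $\overline{\spa{R}}$ is a flat-isotopy invariant (so that $\overline{\spa{R}}_1$ is genuinely $\pm 1$ times $\spa{R}_1$ with a fixed sign within a flat-isotopy class, as guaranteed by \cite[Theorem~3]{yamada89}) and that crossing changes, which need not preserve $m$, are nonetheless handled by the unnormalised relation \eqref{eq:ya_skein_one}; passing to $|\cdot|$ absorbs exactly the residual sign ambiguity, which is why the statement is about $|\spa{R}_1|$ rather than $\spa{R}_1$ itself.
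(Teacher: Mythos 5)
Your proposal is correct and follows essentially the same route as the paper: the paper likewise derives crossing-change invariance of $\spa{R}_{\pm 1}$ from the symmetry of the skein relation when $A=A^{-1}$, combines this with the flat-isotopy invariance of $\overline{\spa{R}}$ (noting $\overline{\spa{R}}_{-1}=\spa{R}_{-1}$ and $|\overline{\spa{R}}_{1}|=|\spa{R}_{1}|$), and invokes the characterisation of equal underlying cogs via flat isotopy together with crossing changes. Your extra care about the residual sign $(-1)^{-m}$ at $A=1$ is exactly the reason the paper states the result for $|\spa{R}_{1}|$ rather than $\spa{R}_{1}$.
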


\begin{example}\label{examp:yasp}
Let $\spa{G}_1$ and  $\spa{G}_2$ be the flat vertex graph diagrams in Figure~\ref{fig:yamada}. Then
\[
    \spa{R}(\spa{G}_1) = \spa{R}(\spa{B}_1 \vee \spa{B}_1) = -\sigma^2  =-(A+1+A^{-1})^2,
\]
and, writing  $\spa{T}_4$ to denote the plane graph consisting of two vertices and   four parallel edges,
\begin{align*}
    \spa{R}(\spa{G}_2) &=  A\, \spa{R}(\spa{B}_1 \vee \spa{B}_1) +A^{-1} \, \spa{R}(\spa{B}_1 \vee \spa{B}_1) +  \spa{R}(\spa{T}_4)
    \\&= A (-\sigma^2)+A^{-1} (-\sigma^2)+ ( \sigma^3 -\sigma^2 +\sigma)
    \\&
    =\sigma(-\sigma^2) +\sigma^3+\sigma 
    = \sigma = A+1+A^{-1}.
\end{align*}
This gives
$|\spa{R}_{1}(\spa{G}_1)|=9$ and $|\spa{R}_{1}(\spa{G}_2)|=3$. Thus, as a cog invariant, $|\spa{R}_{1}|$ separates the non-isomorphic cogs with one vertex and two loops.  However
 $\spa{R}_{-1}(\spa{G}_1)=-1=\spa{R}_{-1}(\spa{G}_2)$, so $\spa{R}_{-1}$ does not separate these two cogs.

\end{example}

\begin{figure}[t]
    \centering
    \labellist
\pinlabel $\spa{G}_1$ at 28 6
\pinlabel $\spa{G}_2$ at 96 6
\endlabellist
    \includegraphics[width=.4\linewidth]{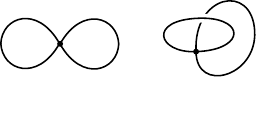}
    \caption{Two flat vertex graph diagrams.}
    \label{fig:yamada}
\end{figure}

Example~\ref{examp:yasp} shows that $|\spa{R}_{1}|$ is a genuine cog invariant as opposed to just a graph invariant. This is since the cogs $\spa{G}_1$ and $\spa{G}_2$ in the example have the same underlying graph but are distinguished by $|\spa{R}_{1}|$. However, the example does not show this for $\spa{R}_{-1}$. It turns out that $\spa{R}_{-1}$ depends only upon the underlying graph of a cog, hence it is a  graph invariant.

\begin{proposition}
Let $\spa{G}$ and $\spa{G}'$ be flat vertex graph diagrams with the same underlying graph, 
then 
\[ \spa{R}_{-1}(\spa{G})= \spa{R}_{-1}(\spa{G}').\]
Thus, $\spa{R}_{-1}$  defines a graph invariant and thus does not distinguish cogs that have the same underlying graph.
\end{proposition}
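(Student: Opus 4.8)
The starting point is Proposition~\ref{prop:ya}, which already gives that $\spa{R}_{-1}$ is a cog invariant; so it suffices to show that any two cogs with the same underlying graph $G$ take the same value. I would induct on the number of edges of $G$. If $G$ has a non-loop edge $e$, the deletion--contraction relation~\eqref{eq:ydef2} gives $\spa{R}_{-1}(\spa{G})=\spa{R}_{-1}(\spa{G}\ba e)+\spa{R}_{-1}(\spa{G}/e)$, and since $\spa{G}\ba e$ and $\spa{G}/e$ are cogs on graphs with fewer edges, induction makes their values depend only on $G\ba e$ and $G/e$, hence only on $G$. The only case left is that $G$ has no non-loop edge, i.e. $G$ is a disjoint union of isolated vertices and bouquets of loops; by the multiplicativity rule~\eqref{eq:ydef3} this reduces to showing that $\spa{R}_{-1}$ is the same for all cogs on a single bouquet of $n$ loops. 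This bouquet case is the heart of the matter: loops cannot be removed by~\eqref{eq:ydef2}, so a genuinely new ingredient is required.

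The cleanest way I see to finish is to identify $\spa{R}_{-1}$ with a classical graph invariant. Set $P(G):=(-1)^{|E(G)|-|V(G)|}\,F(G;0)$, where $F$ is the flow polynomial, and prove $\spa{R}_{-1}=P$; since $F$, hence $P$, is a graph invariant, this settles everything (and in fact re-proves Proposition~\ref{prop:ya}). To prove $\spa{R}_{-1}=P$ it is enough to verify that $P$ --- read on a flat vertex graph diagram as $P$ of its underlying abstract graph, with crossings ignored --- satisfies every defining relation~\eqref{eq:ydef1}--\eqref{eq:ydef6} of $\spa{R}$ specialised at $A=-1$, since those relations determine $\spa{R}$ uniquely (resolve crossings with~\eqref{eq:ydef1}, then reduce to edgeless graphs with the rest). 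Most of these are immediate: deletion--contraction of the flow polynomial for non-loop edges, together with the bookkeeping of the $(-1)^{|E|-|V|}$ factor, yields~\eqref{eq:ydef2} for $P$; multiplicativity of $F$ over disjoint unions and over one-point joins gives~\eqref{eq:ydef3} and~\eqref{eq:ydef4}, the extra sign in~\eqref{eq:ydef4} arising because a one-point join loses a vertex; and $F(\bullet;0)=1$, $F(\spa{B}_1;t)=t-1$ give~\eqref{eq:ydef5} and~\eqref{eq:ydef6} (note $\sigma=-1$ at $A=-1$).

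The one real check is the skein relation~\eqref{eq:ydef1} at $A=-1$, where it becomes $\spa{R}_{-1}(X)=-\spa{R}_{-1}(V)-\spa{R}_{-1}(H)+\spa{R}_{-1}(T)$. If $H$ denotes the rest of the diagram and $a,b,c,d$ the four points where the crossing meets it (with $a,b$ on one through-strand and $c,d$ on the other), the underlying graphs of the four terms are $H+ab+cd$, $H+ac+bd$, $H+ad+bc$ and $H+w+aw+bw+cw+dw$; the first three share a common value of $|E|-|V|$ and the last exceeds it by one, so~\eqref{eq:ydef1} for $P$ reduces to the purely combinatorial identity
\[ F(H+ab+cd;0)+F(H+ac+bd;0)+F(H+ad+bc;0)+F(H+w+aw+bw+cw+dw;0)=0. \]
I expect establishing this local flow-polynomial identity at $t=0$ --- which relates the three pairings of four half-edges with the ``$4$-valent vertex'' state --- to be the main obstacle; I would prove it by a direct deletion--contraction computation at the vertex $w$, handling the bridge and loop cases of flow deletion--contraction with care, or else by quoting the corresponding identity from the literature on the Penrose and transition polynomials. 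Everything else in the argument is formal; it is only here that one must genuinely chase flow counts across the four states.
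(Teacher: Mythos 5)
Your route is genuinely different from the paper's. The paper's entire proof is a citation: \cite[Proposition~4(3)]{yamada89} at $A=-1$ yields a local move exchanging two adjacent half-edges at a vertex under which $\spa{R}_{-1}$ is invariant, and since such exchanges generate every reordering of the half-edges at a vertex, $\spa{R}_{-1}$ depends only on the underlying abstract graph. You instead propose to identify $\spa{R}_{-1}$ outright with $(-1)^{|E|-|V|}F(G;0)=(-1)^{k(G)}T(G;0,1)$ of the underlying graph by verifying the defining relations at $A=-1$. That is a stronger and more informative statement than the proposition (a closed formula rather than mere independence of the cyclic orders), and your formal checks of \eqref{eq:ydef2}--\eqref{eq:ydef6} are correct; on the paper's Example~\ref{examp:yasp} your formula gives $(-1)^{1}T(\text{two loops};0,1)=-1$, matching $\spa{R}_{-1}(\spa{G}_1)=\spa{R}_{-1}(\spa{G}_2)=-1$.

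The gap is exactly where you flag it: the local identity
\[
F(H+ab+cd;0)+F(H+ac+bd;0)+F(H+ad+bc;0)+F(H+w+aw+bw+cw+dw;0)=0
\]
is asserted, not proved, and it is the only non-formal step. Note that it cannot follow from a naive flow-counting argument, since for positive integers $t$ the corresponding sum of flow polynomials is not zero (for the one-crossing two-loop bouquet it equals $-(1-t)t^{2}$, which vanishes at $t=0$ but not at $t=3$); the identity is genuinely a polynomial statement special to $t=0$. It is true, and can be closed along the lines you suggest: writing $(-1)^{k(G)}T(G;0,1)=(-1)^{|V(G)|}\sum_{F}(-1)^{|F|}$ with the sum over acyclic edge sets, one conditions on the forest chosen inside $H$, records only the partition of $\{a,b,c,d\}$ induced by its components, and checks that each of the five partition types of a four-element set contributes zero to the alternating sum over admissible extensions into the local picture; this is a short finite verification. (Of course, one could instead simply cite \cite[Proposition~4(3)]{yamada89} as the paper does, at which point your entire reduction becomes unnecessary.) Two smaller loose ends: resolving crossings can create vertexless closed curves, which are not covered by \eqref{eq:ydef2}--\eqref{eq:ydef6} as you invoke them and need their own evaluation rule; and your uniqueness claim for the relations needs the observation that every planar bouquet of loops decomposes as an iterated one-point join, so that \eqref{eq:ydef4} and \eqref{eq:ydef6} really do terminate the recursion.
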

\begin{proof}
\cite[Proposition~4(3)]{yamada89} with $A=-1$ gives that
\[ \spa{R}_{-1} \left( \raisebox{-3mm}{\includegraphics[height=10mm]{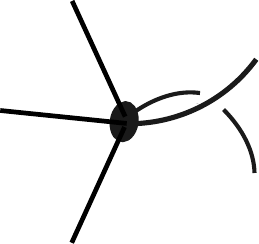}} \right) =  \spa{R}_{-1} \left( \raisebox{-3mm}{\includegraphics[height=10mm]{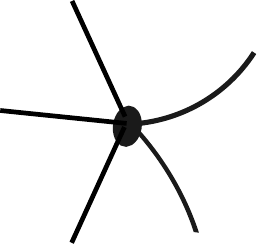}} \right).\]
It follows that we can arbitrarily change the cyclic order at any vertex without changing the value of $\spa{R}_{-1}$.
\end{proof}

As our interest is in cog invariants, and $\spa{R}_{-1}$ does not depend upon  the cyclic orders in a cog, for the rest of this section we shall focus only on $|\spa{R}_{1}|$.

\begin{remark}
As we know that $|\spa{R}_{1}|$ defines a cog invariant, it is natural to ask if taking the absolute value is necessary. That it is necessary can be seen by considering the example given on the last page of~\cite{yamada89} where evaluating the two polynomials at 1 gives different values even though the flat vertex graphs determine the same underlying cog (as all the vertices are of degree three).
\end{remark}

\medskip

We now turn our attention to finding a combinatorial description of  $|\spa{R}_{1}|$.
Given a flat vertex graph diagram $\spa{G}$, first observe we can use Equation~\eqref{eq:ydef1} to eliminate all crossings and  hence (possibly also using Equation~\eqref{eq:ydef3})  express $\spa{R}(\spa{G})$ in terms of values of $\spa{R}$ on plane graphs (i.e., graphs cellularly embedded in $\mathbb{R}^2$). Next observe that if $\spa{G}'$ is a flat vertex graph diagram with no crossings, i.e., it is a graph embedded in the plane, then Equations~\eqref{eq:ydef2}--\eqref{eq:ydef6} define a graph polynomial via deletion-contraction relations, and by the Universality Property of the Tutte polynomial (a statement of which can be found in~\cite[Theorem~2.24]{zbMATH07553843}) it is an evaluation of the Tutte polynomial:
$ \spa{R}(\spa{G}') = (-1)^{k(\spa{G}')} T(\spa{G}'; 0,-\sigma)$, where $k(\spa{G}')$ is the number of components of $\spa{G}'$, and $T(\spa{G}';x,y)$ denotes the Tutte polynomial.
Combining these two observations allows us to use graph states to rewrite $\spa{R}(\spa{G})$ as follows.

\begin{table}[t]
    \centering

\begin{tabular}{|c|c|c|c|}
\hline
 \raisebox{14pt}{$s\in\mathfrak{S}(\spa{G})$}& 
   \labellist
   \small\hair 2pt
\pinlabel $+1$ at 14 4
\endlabellist
   \includegraphics[width=.1\linewidth]{crossing}
 &    \labellist
   \small\hair 2pt
\pinlabel $0$ at 14 4
\endlabellist \includegraphics[width=.1\linewidth]{crossing}&
  \labellist
   \small\hair 2pt
\pinlabel $-1$ at 14 4
\endlabellist
   \includegraphics[width=.1\linewidth]{crossing}
\\
\hline
  \raisebox{14pt}{$\spa{G}[s]$}  & \includegraphics[width=.1\linewidth]{smoothing_v}
 &   \includegraphics[width=.1\linewidth]{vertex}&
   \includegraphics[width=.1\linewidth]{smoothing_h}
\\\hline
\end{tabular}
    \caption{states of a flat vertex graph diagram.}
    \label{table:states}
\end{table}

By a \emph{state} of a flat vertex graph diagram $\spa{G}$ we mean the assignment of the symbols $+1$, $0$, or $-1$ to each of its crossings. We let $\mathfrak{S}(\spa{G})$ denote the set of all states of $\spa{G}$. For a state $s$ of $\spa{G}$ we let $w(s)$ denote the sum of the symbols $+1,0,-1$ over all crossings, and we let $\spa{G}[s]$ denote the graph embedded in the plane obtained by replacing each crossing with the configuration indicated in Table~\ref{table:states}. (So a crossing assigned 0 becomes a degree four vertex, and crossings with a $+$ or $-$ are smoothed as indicated in the figure.) We can then write
\begin{equation}\label{eq:yamtutte}
\spa{R}(\spa{G}) = \sum_{s\in \mathfrak{S}(\spa{G})} A^{w(s)} (-1)^{k(\spa{G}[s])} T(\spa{G}[s]; 0, -\sigma).
\end{equation}
(Note that this expression for $\spa{R}(\spa{G})$ is just that appearing at the end of page~540 of~\cite{yamada89} after using~\cite[Theorem~2]{oxley} to express the Negami polynomial in terms of the Tutte polynomial.)
We shall make use of a known interpretations of the Tutte polynomial at $-\sigma=-3$ (when $A=1$).

\medskip

The cogs invariant arising from $|\spa{R}_{1}|$ can be described without reference to spatial graphs by considering cog drawings. 
A cog drawing is just a graph drawing, in the usual sense, in an oriented plane, but we use the different name to emphasise that the context is different.
A \emph{cog drawing} consists of a set of points, called \emph{vertices}, in an oriented plane; and a set of paths in this plane, called \emph{edges}, between the vertices. The edges do not intersect vertices except at their end points and all other multiple points consist of two paths crossing transversally. Such multiple points are called \emph{crossings}. 

The vertices and edges of a cog drawing $\dr{\cog{G}}$ give rise to a graph in the usual and obvious way. In addition, the orientation of the plane specifies a cyclic order of the incident half-edges at each vertex.
Thus, we obtain a rotation system, and we then take its underlying cog $\cog{G}$ (so we are considering a cog as a reversal-equivalence class of rotation systems).
We say that $\dr{\cog{G}}$ is a \emph{drawing of $\cog{G}$}. 

A \emph{state} $\dr{S}$ of a cog drawing $\dr{\cog{G}}$ is a plane graph arising by replacing each crossing in $\dr{\cog{G}}$ with one of the configurations indicated in the bottom row of Table~\ref{table:states}. We use $\mathfrak{S}( \dr{\cog{G}})$ to denote the set of all states of $\dr{\cog{G}}$. 

\begin{definition}\label{def:yamcogv2}
Let $\cog{G}$ be a cog and $\dr{\cog{G}}$ be any drawing of $\cog{G}$. 
\[ Y(\cog{G}) =  \left|\sum_{\dr{S}\in \mathfrak{S}(\dr{\cog{G}})}  (-1)^{k(\dr{S})} \; T(\dr{S}; 0, -3) \right|, \]
where
 $k(\dr{S})$ its number of components of $\dr{S}$.  
\end{definition}

Proposition~\ref{prop:ya} together with Equation~\eqref{eq:yamtutte} immediately give that $Y$ ($=|\spa{R}_{1}|$) is a well-defined cog invariant.

\begin{proposition}
The value of 
$Y(\cog{G})$ is independent of the choice of drawing of the cog $\cog{G}$. Additionally, if $\cog{G}$ and $\cog{G}'$ are isomorphic cogs then $Y(\cog{G})=Y(\cog{G}')$. 
\end{proposition}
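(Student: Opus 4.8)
The plan is to reduce both assertions to Proposition~\ref{prop:ya} together with the expansion~\eqref{eq:yamtutte}. First I would note that any drawing $\dr{\cog{G}}$ of $\cog{G}$ may be promoted to a flat vertex graph diagram $\spa{G}$ by choosing, arbitrarily, an over/under assignment at each crossing. Since this extra data does not affect the cyclic orders read off at the vertices from the orientation of the plane, the underlying cog of $\spa{G}$ is precisely $\cog{G}$. Moreover the right-hand side of~\eqref{eq:yamtutte} does not involve the over/under data at all, being written entirely in terms of the plane graphs $\spa{G}[s]$; evaluating at $A = 1$ (so that $\sigma = 3$ and $A^{w(s)} = 1$ for every state $s$) gives $\spa{R}_1(\spa{G}) = \sum_{s \in \mathfrak{S}(\spa{G})} (-1)^{k(\spa{G}[s])}\, T(\spa{G}[s]; 0, -3)$.

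Next I would match up the two notions of state. A state of the flat vertex graph diagram $\spa{G}$ and a state of the cog drawing $\dr{\cog{G}}$ are each a choice, at every crossing, of one of the three configurations in the bottom row of Table~\ref{table:states}; this yields a bijection $\mathfrak{S}(\spa{G}) \to \mathfrak{S}(\dr{\cog{G}})$, $s \mapsto \dr{S}$, under which $\spa{G}[s]$ and $\dr{S}$ are the same plane graph. Consequently $\spa{R}_1(\spa{G}) = \sum_{\dr{S} \in \mathfrak{S}(\dr{\cog{G}})} (-1)^{k(\dr{S})}\, T(\dr{S}; 0, -3)$, so the number computed from the drawing $\dr{\cog{G}}$ in Definition~\ref{def:yamcogv2} equals $|\spa{R}_1(\spa{G})|$.

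Independence of the drawing is then immediate: if $\dr{\cog{G}}_1$ and $\dr{\cog{G}}_2$ are drawings of $\cog{G}$, lift each to a flat vertex graph diagram $\spa{G}_1$, $\spa{G}_2$ as above; both determine the underlying cog $\cog{G}$, so Proposition~\ref{prop:ya} gives $|\spa{R}_1(\spa{G}_1)| = |\spa{R}_1(\spa{G}_2)|$, and by the previous step these are exactly the two candidate values of $Y(\cog{G})$. For the isomorphism claim, given a cog isomorphism $\phi \colon \cog{G} \to \cog{G}'$ and a drawing $\dr{\cog{G}}$, relabelling its vertices and half-edges along $\phi$ produces a drawing $\dr{\cog{G}'}$ of $\cog{G}'$ (using that $\phi$ preserves the undirected cyclic order at each vertex), with the same crossings. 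The states of $\dr{\cog{G}'}$ are the relabellings of those of $\dr{\cog{G}}$, and corresponding plane graphs are isomorphic as abstract graphs, hence have equal numbers of components and equal Tutte polynomials; so the two defining sums coincide and $Y(\cog{G}) = Y(\cog{G}')$.

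I expect no real obstacle here, only a couple of points to handle carefully: checking that an arbitrary choice of crossing data on a cog drawing yields a flat vertex graph diagram with the correct underlying cog, and nailing down the state bijection so that $\spa{G}[s] = \dr{S}$ literally. Everything else follows from Proposition~\ref{prop:ya} and~\eqref{eq:yamtutte}. One could instead try to prove invariance of the defining sum directly under the shadow versions of the flat deformation moves, but that would essentially re-derive Proposition~\ref{prop:ya} and is more laborious.
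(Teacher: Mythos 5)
Your proposal is correct and is essentially the paper's own argument: the paper disposes of this proposition in one line by observing that it follows immediately from Proposition~\ref{prop:ya} together with the state-sum expression~\eqref{eq:yamtutte}, and your write-up simply spells out the details of that deduction (lifting a drawing to a flat vertex graph diagram by an arbitrary choice of crossings, matching the two notions of state at $A=1$, and handling relabelling under a cog isomorphism). No gaps; you have just made explicit what the paper leaves implicit.
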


Note that if a cog $\cog{G}$ has  a plane drawing then $Y(\cog{G})$ reduces to just $(-1)^{k(G)}$ times the Tutte polynomial $T(G;0,-3)$ of the underlying graph $G$.

If $H$ is a graph, then the number of \emph{nowhere-zero $\mathbb{Z}_k$-flows}, denoted $F(H;k)$ is defined to be the number of ways to assign non-zero elements of $\mathbb{Z}_k$ to each edge of an arbitrary orientation of $H$ such that the net flow into each vertex equals the net flow out of the vertex. By~\cite{zbMATH03087501} (or see \cite{zbMATH07680496} for background on flows and the Tutte polynomial), we have $F(H;k) = (-1)^{n(H)}T(H;0,1-k)$. Thus, the cog invariant $Y(\cog{G})$ can be expressed in terms of counts of nowhere-zero $\mathbb{Z}_4$-flows:
\begin{equation*}\label{eq:yamtutteb}
Y(\cog{G})= \Big|\sum_{\dr{S}\in \mathfrak{S}(\dr{\cog{G}})} (-1)^{e(\dr{S})+v(\dr{S})}\, F(\dr{S},4)\Big|,
\end{equation*}
where $\dr{\cog{G}}$ is any cog drawing of $\cog{G}$.

\section{Further study}\label{sec:further}

As noted in the introduction, we aimed to begin a study of the topological properties of cogs as a class of combinatorial objects situated  between abstract and cellularly embedded graphs.  That cogs lie between these classes suggests numerous further directions for study, and we mention a few possibilities here.

\begin{enumerate}


\item \label{fs.1} Since genus questions are central to topological graph theory, it is natural to consider the genus range of a cog. This question also exposes another way that cogs sit between abstract and embedded graphs. For this item we assume all graphs are connected.

The genus range of an abstract graph, that is, the set of genera of the surfaces in which the graph cellularly embeds, is known to be a  sequence of consecutive integers. 
We may consider different notions of genus in this definition. The \emph{orientable genus range} considers only embeddings in orientable surfaces; The \emph{nonorientable genus range} considers only embeddings in nonorientable surfaces; the \emph{Euler genus range} considers the Euler genus over embeddings in both orientable and nonorientable surfaces. (The \emph{Euler genus} equals the genus for nonorientable surfaces and equals twice the genus of an orientable surface. By Euler's formula it equals $2k-v+e-f$.)  

The genus range of a cog can be defined analogously: it is the set of genera of cellular graph embeddings with given underlying cog.  ``Genus'' here can mean ``orientable genus'', ``nonorientable genus'', or ``Euler genus''.
Each genus range of a cog is a subset of the corresponding genus range of its underlying graph.

While the orientable genus range of an abstract graph is always an interval, this is not the case for cogs. 
The $n$-dipole graphs, which consist of two vertices with $n$ parallel edges between them, give rise to a family of cogs with arbitrarily large gaps in their orientable genus ranges. 
If we take the cog of the usual planar embedding of an $n$-dipole, with $n$ faces of length $2$, then the other orientable embedding of this has one face (and hence genus $(n-1)/2$) for odd $n$ and two faces (and hence genus $(n-2)/2$) for even $n$.

In contrast the Euler genus range of a cog is always an interval.
To see this begin with any minimum Euler genus embedding of the cog.  We may then take the partial Petrie dual with respect to any edge that meets two faces.  Since taking partial Petrie duals does not change the cyclic orders at the vertices, the result of this operation is again an embedding of the original cog, but now with one less face.  Repeating this process reduces the number of faces exactly one at a time, until a one-face embedding of the cog results.  This gives an interval of genera from the minimum to the maximum Euler genus embeddings of the cog.
 Moreover, combining two faces by taking the partial Petrie dual of an edge contained in both always results in a nonorientable embedding \cite[Lemma 6(a)]{Sta78}.  It follows that if the range of the Euler genus is the interval $[g, h]$, then the range of the nonorientable genus is also an interval, either $[g, h]$ or $[g+1, h]$ (which is empty if $g=h$).

A natural question is for which cogs the orientable genus range is an interval.  An immediate example is given by $3$-regular graphs, as there is only one undirected cyclic ordering of the half-edges at each vertex, so the orientable genus range is just the orientable genus range of the underlying graph.
It is shown in~\cite[Lemma 4]{B+15} (where cogs are called ``rigid vertex graphs'') that 4-regular cogs also have orientable genus ranges that are intervals.
In fact, we can provide a short proof of the following more general result.

\begin{proposition} The orientable genus range of a cog of maximum degree at most $4$ is an interval.
\end{proposition}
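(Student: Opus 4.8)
The plan is to describe the orientable embeddings having a given underlying cog by rotation systems, prove a local lemma saying that reversing the rotation at a single vertex of degree at most $4$ changes the number of faces by $0$ or $\pm2$, and then run a discrete intermediate value argument between a minimum-genus and a maximum-genus embedding.

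First I would fix notation. Let $\cog{G}$ be a cog with underlying graph $G=(V,E)$. By Subsection~\ref{ss:rs}, an orientable embedding of $G$ is described by a rotation system on $G$, and the number of faces --- hence, via Euler's formula, the genus --- is determined by the rotation system through face tracing. Such an embedding has underlying cog $\cog{G}$ exactly when the rotation system is \emph{compatible} with $\cog{G}$, meaning that at each vertex $v$ the chosen directed cyclic order of the half-edges in $H_v$ reduces, on forgetting its direction, to the undirected cyclic order that $\cog{G}$ prescribes at $v$. An undirected cyclic order of a set of size at most $2$ has a unique directed refinement, while one of size at least $3$ has exactly two (a cyclic order and its reverse); consequently, any two rotation systems compatible with $\cog{G}$ differ by reversing the rotations at the vertices of some set $S\subseteq V$, where every vertex in $S$ has degree at least $3$, hence (under our hypothesis) degree $3$ or $4$.

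The heart of the argument is the following lemma: \emph{if $v$ has degree at most $4$, then reversing the rotation at $v$ changes the number of faces of the embedding by $0$ or $\pm2$, hence changes the orientable genus by $0$ or $\pm1$.} I would prove this with the permutation model of embeddings: encode the rotation system by the rotation permutation $\rho$ of the half-edges (the product of the cyclic orders at all vertices) and by the fixed-point-free involution $\theta$ pairing the two half-edges of each edge, so that the faces correspond to the cycles of $\rho\theta$. Reversing the rotation at $v$ replaces the cycle $\rho_v$ of $\rho$ by $\rho_v^{-1}$; since the cycles of $\rho$ have disjoint supports, this replaces $\rho$ by $\rho_v^{-2}\rho$ and the face permutation by $\rho_v^{-2}(\rho\theta)$. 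Now $\rho_v^{-2}$ is supported on $H_v$ and, when $\deg v\le4$, is a product of at most two transpositions: the identity if $\deg v\le2$, a $3$-cycle if $\deg v=3$, and the fixed-point-free involution $\rho_v^{2}$ --- the double transposition of the two pairs of ``opposite'' half-edges --- if $\deg v=4$. Since composing a permutation with a single transposition changes its number of cycles by exactly $\pm1$, composing with $\rho_v^{-2}$ changes the number of cycles of $\rho\theta$ by an even integer of absolute value at most $2$. This is precisely where the degree hypothesis enters, and checking the stated form of $\rho_v^{-2}$ in the degree-$4$ case is the one genuine computation; the rest is formal.

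Finally I would assemble the interval statement. Let $g_{\min}$ and $g_{\max}$ be the smallest and largest elements of the orientable genus range of $\cog{G}$ (a finite nonempty set of nonnegative integers), realized by compatible rotation systems $\rho$ and $\rho'$ respectively. These differ by reversals at the vertices of some set $\{v_1,\dots,v_m\}$, all of degree at most $4$; reversing $v_1,\dots,v_m$ one at a time produces a sequence $\rho=\rho^{(0)},\rho^{(1)},\dots,\rho^{(m)}=\rho'$ of compatible rotation systems in which, by the lemma, the genus changes by at most $1$ at each step. An integer sequence that starts at $g_{\min}$, ends at $g_{\max}$, and has steps of absolute value at most $1$ attains every value in $\{g_{\min},g_{\min}+1,\dots,g_{\max}\}$, so each such value is the genus of a compatible rotation system and hence lies in the orientable genus range of $\cog{G}$. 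Therefore that range equals $[g_{\min},g_{\max}]$. I expect the main obstacle to be the lemma --- specifically, pinning down the effect on the face count of a reversal at a degree-$4$ vertex --- while the surrounding reduction and the interpolation step are routine.
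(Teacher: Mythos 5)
Your proof is correct and follows essentially the same route as the paper's: both reduce to the fact that all oriented embeddings with a given underlying cog form a single reversal-equivalence class, show that a reversal at a vertex of degree at most $4$ changes the number of faces by $0$ or $\pm 2$ (hence the genus by $0$ or $\pm 1$), and conclude by a discrete intermediate value argument. The only difference is in how the local lemma is justified: the paper bounds the number of faces incident with $v$ by the degree and invokes parity of the Euler characteristic, whereas you compute directly in the permutation model that the face permutation changes by composition with $\rho_v^{-2}$, a product of at most two transpositions --- a somewhat more explicit but equivalent argument.
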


\begin{proof}
The orientable genus range is the same as the oriented genus range.
The oriented embeddings for the given cog form a reversal-equivalence class.
Consider one such oriented embedding.
Reversal at a vertex $v$ may change the number of faces incident with $v$, although the number of faces not incident with $v$ does not change.  But the number of faces incident with $v$ is a number in the range $[1,4]$, and it can only change by an even number since the Euler characteristic must be even.  So the only possible changes are $0$ and $\pm 2$, which means the orientable genus changes by $0$ or $\pm 1$. Therefore, by starting with an oriented embedding of minimum genus for a cog, and performing a sequence of vertex reversals to reach an oriented embedding of maximum genus for that cog, we must go through an interval of values.  Thus, the oriented, or orientable, genus range is an interval.
\end{proof}

Classical questions in topological graph theory have cog analogues. For example finding a characterisation for upper-embeddable cogs, i.e.~those with orientable embeddings having only one face, analogous to that for abstract graphs (see \cite{MR532590} and \cite{MR286713}). Or, more generally, determining the maximum orientable genus of cogs.

\item  \label{fs.2} In Section~\ref{sec:trans2} we considered cog invariants arising from the topological transition polynomial. Cog invariants also arise from Jaeger's transition polynomial~\cite{MR1096990} and the 
 generalised transition polynomial of  \cite{MR1980048}, both of which also use vertex transitions. Details can be found in \cite[Section 13.3.5]{zbMATH07680507}. In brief, if $G$ is an even graph, possibly with free loops, then  
a \emph{vertex state} at a vertex $v$ of $G$ is a partition, into pairs, of the edges incident with $v$. A \emph{graph state} is a choice of vertex state at each vertex. 
A graph state $s$ defines a cover of $G$ by circuits (each circuit given by following the paired edges). We let $c(s)$ denote the number of such circuits resulting \mbox{from $s$.}  

If there is a way to distinguish among these vertex states (for example, by labelling half-edges, by using an underlying orientation, by a checkerboard colouring, etc.), then we may use such distinctions to assign a weight $\omega(v,s)$ to the vertex state at $v$ in a graph state $s$, and then a weight $\omega(s) = \prod_{v \in V(F)} {\omega(v,s)}$ to $s$. This results in a weight function $W$ that assigns a weight to each graph state.
The \emph{transition polynomial} is then
\begin{equation}\label{EMMmedial:sdhi}
q(G; W,t)= \sum_{s} {\omega( s ) \,t^{c(s)} },
\end{equation} 
where the sum  is over all graph states $s$ of $G$.  

Key to the definition of the transition polynomial is that there needs to be a mechanism to distinguish among these vertex states.
The observation is then that if $\cog{G}$ is an even cog, then the undirected cyclic orders can be used to distinguish between the vertex states, and the transition polynomial will give rise to a cog invariant as long as the values of $\omega(v,s)$ depend only upon the undirected cyclic orders. 
(For example, if the undirected cyclic order at a degree four vertex $v$ is $\ouco e_1,e_2,e_3,e_4\cuco$ then the vertex states given by  $(e_1,e_2)$,  $(e_3,e_4)$ and by $(e_1,e_4)$,  $(e_3,e_2)$ must have the same weight, which can be different from that given by $(e_1,e_3)$,  $(e_2,e_4)$.)  Thus, the generalised transition polynomial of \cite{MR1980048} can also immediately give  cog invariants. What are the properties of these invariants and what can we deduce about cogs from them?

Similarly the Holant graph invariants introduced in \cite{ MR2386281} and developed in \cite{MR2988772}  (see also \cite{zbMATH07680491} for a survey with graph polynomial connections) use functions dependent on properties of the half-edges incident to a vertex, such as cyclic order.
These invariants might then also be readily adapted to cog invariants.

\item  \label{fs.3}
An invariant of the type discussed in (\ref{fs.2}) above arises in the context of assembly graphs, which have been used to model DNA recombination~\cite{angeleska09}. 
Following~\cite{B+13}, an \emph{assembly graph} is a rotation system in which each vertex is of degree one or four. (Note that~\cite{angeleska09} defines assembly graphs as cogs rather than rotation systems, but  this difference in the definitions proves to be immaterial here.)
An assembly graph is called \emph{simple} if it contains   a \textit{transverse Eulerian trail}, which an Eulerian trail that goes straight across every degree four vertex, as determined by the cyclic order of its half-edges. Note it has exactly two degree one vertices. It is a \emph{simple oriented assembly graph} if this transverse Eulerian trail is directed.  

In \cite{B+13} the authors give a polynomial invariant $S_{\ass{G}}(p,t) \in \mathbb{Z}[p,t]$ of a simple oriented assembly graph $\ass{G}$ called the \textit{assembly polynomial}.
The assembly polynomial is an instance of the transition polynomial of Equation~\eqref{EMMmedial:sdhi}. If $\hat{\ass{G}}$ is the rotation system obtained from a simple oriented assembly graph $\ass{G}$ by identifying the two degree one vertices, then 
\[S_{\ass{G}}(p,t) = t^{-1} q(\hat{\ass{G}}; W,t)\] where  $\omega(v, s )$ is given as follows.
 If $e_1$, $e_2$, $e_3$, $e_4$ are the half-edges incident to a degree four vertex $v$ and the transverse directed transverse Eulerian trail meets the half-edges in the order $e_1,e_2,e_3,e_4$ (thus the cyclic order at $v$ is either $( e_1,e_3,e_2,e_4)$ or $( e_1,e_4,e_2,e_3)$), then set $\omega(v, s )=p$ when $s$ is the pairing $(e_1,e_4),(e_2,e_3)$, set $\omega(v, s)=1$ when $s$ is $(e_1, e_3),(e_2, e_4)$,  set  $\omega(v, s )=0$ when $s$ is $(e_1,e_2),(e_3,e_4)$. If $v$ is the degree 2 vertex then set $\omega(v, s)=1$. (Note that if we use  extension of the transition polynomial to non-even graphs, such as in \cite{MR2693850}, then we can consider the transition polynomial of $\ass{G}$ rather than $\hat{\ass{G}}$.) 
In determining the value of $\omega(v, s )$ we do need to make use of the transverse Eulerian trail, however when $p=1$ the values of $\omega(v, s )$ can be defined in terms of the cyclic order of half-edges at $v$ alone.  
Indeed the value of $(v, s )$ can be defined in terms of the undirected cyclic order  $\ouco e_1,e_3,e_2,e_4\cuco$ alone. It follows that  $S_{\ass{G}}(1,t)$ gives an invariant for a class of cogs.
(Note that $S_{\ass{G}}(1,t)$ can be defined for cogs that have only degree one or four vertices. When $p=1$, the existence of a transverse Euler trail is no longer needed.)  
 What are the properties of this invariant and what does it tell you about the original biological problem? More generally, what is the role of the theory of cogs in DNA applications?

\item  \label{fs.4} 
Multimatroids (see~\cite{MM1zbMATH01116184}) constructed from delta-matroids are another possible source of cog invariants.

Every ribbon graph $\bG$ has an associated \emph{delta-matroid} $D(\bG)$, which consists of a family of edge sets, known as \emph{feasible sets}, satisfying a certain exchange property \cite{MR904585,zbMATH04185622,CMNR1}.
Two ribbon graphs that are partial duals have delta-matroids that are \emph{twists} of each other.
Every delta-matroid has an associated \emph{$2$-matroid} (or \emph{symmetric matroid}), and delta-matroids that are twists have the same $2$-matroid \cite{MR904585}.  Thus, each equivalence class of ribbon graphs under partial duality has an associated $2$-matroid.

By Proposition \ref{prop:ppcog2}, each cog corresponds to an equivalence class of ribbon graphs under partial Petrie duality.
From the theory of twisted duals (see~\cite{MR2869185}) we have
$(\bG^{\tau(A)})\wil = (\bG\wil)^{\delta(A)}$ for a given ribbon graph $\bG$, where $\bG\wil =  \bG^{\delta\tau\delta(E)}$ is called the \emph{Wilson dual} of $\bG$.  Hence, the Wilson dual maps the partial Petrie duality equivalence class of $\bG$ to the partial duality class of $\bG\wil$, giving a 1-1 correspondence between partial Petrie duality classes of ribbon graphs and partial duality classes of ribbon graphs.  We can therefore map a cog $\cog{G}$ to the corresponding partial Petrie duality class, then via the Wilson dual to a partial duality class, and hence to a $2$-matroid.  An invariant of the $2$-matroid is an invariant of the cog.  Different cogs may have the same $2$-matroid, but the situations in which this occurs are restricted, similar to the situations in which two graphs have the same matroid.

An obvious 2-matroid invariant to consider is the interlace polynomial or transition polynomial~\cite{MR3191496}. However, due to the compatibility between theses polynomials and the transition polynomial of a ribbon graph as described in~\cite{CMNR2} this approach essentially recreates the construction of the invariant in Equation~\eqref{eq:qgeq1}. 
Although we were unable to find genuinely new examples of cog invariants arising through multimatroids in this way, we believe some should exist.

The $2$-matroid $S$ of a cog $\cog{G}$ described above can be derived directly from a ribbon graph with underlying cog $\cog{G}$, without going through Wilson duals.
However, $S$ does not seem a completely natural object, in the following sense.
Cogs sit between graphs and ribbon graphs, which have very fruitful extensions to matroids and delta-matroids respectively, and the delta-matroid of a ribbon graph $\bG$ contains the matroid of its underlying graph $G$ in an obvious way.
However, if $\cog{G}$ is the cog between $\bG$ and $G$, then its $2$-matroid $S$ is not contained in an obvious way in the delta-matroid of $\bG$, and does not contain an obvious copy of the matroid of $G$.
A tantalising question, therefore, is whether there is a class of combinatorial objects, between matroids and delta-matroids, that extends cogs in a more natural way. 

\item\label{fs.sc}
When we form a cog from a signed rotation system, or from a graph embedding (considered as a flip-equivalence class of signed rotation systems) we discard all edge signs.  But it may make sense to keep some sign information.  We could consider \emph{signed cogs}, namely cogs with an edge signature, or cogs with an underlying signed graph, but these do not seem to have a natural interpretation in terms of graph embeddings.  However, signed graphs are often considered up to switching-equivalence, and a switching-equivalence class of signed graphs can be considered as a graph with a group homomorphism from its cycle space (as an additive group) to the multiplicative group $\{1, -1\}$: we will call this a \emph{cycle-signed graph}.  In the same way we can consider switching-equivalence classes of signed cogs as \emph{cycle-signed cogs}.  Cycle-signed cogs can be shown to be in 1-1 correspondence with equivalence classes of general (orientable or nonorientable) graph embeddings under vertex reversals (which are well-defined operations on general embeddings because they commute with vertex flips).  Therefore, construction of invariants for cycle-signed cogs that are not just cog invariants may be of some interest.

\end{enumerate}

\section*{Acknowledgments}

This work arose from discussions between M.E, J.E.-M., I.M and W.M. at the \emph{Workshop on Uniqueness and Discernment in Graph Polynomials} held at the MATRIX Institute for the Mathematical Sciences in October 2023. M.E., J.E.-M. and I.M. were partially supported by MATRIX--Simons Travel Grants to participate in this workshop.  We are very grateful for the support and productive research environment provided by MATRIX.

M.E. is grateful to the Simons Foundation for support under awards 429625 and MPS-TSM-00002760, and thanks Kevin Grace for useful discussions.

\section*{Statements on open access:}

For the purpose of open access, the authors have applied a Creative Commons
Attribution (CC BY) license to any Author Accepted Manuscript version arising.

There are no conflicts of interest.

\bibliographystyle{abbrv}

\end{document}